\documentclass[leqno]{amsart}
\usepackage{amsmath}
\usepackage{amssymb}
\usepackage{amsthm}
\usepackage{enumerate} 
\usepackage[mathscr]{eucal}
\usepackage{xfrac}
\theoremstyle{plain}
\usepackage{tikz}
\newtheorem{theorem}{Theorem}[section]
\newtheorem{lemma}[theorem]{Lemma}
\newtheorem{prop}[theorem]{Proposition}
\theoremstyle{definition}
\newtheorem{definition}[theorem]{Definition}
\newtheorem{remark}[theorem]{Remark}
\newtheorem{conjecture}[theorem]{Conjecture}
\newtheorem{example}[theorem]{Example}

\newtheorem{cor}[theorem]{Corollary}
\theoremstyle{remark}  
\usepackage[latin1]{inputenc}
\usepackage{tikz}
\usetikzlibrary{shapes,arrows}




\begin{document}
	
	\title[On $k$-smoothness of operators between Banach spaces]{On $k$-smoothness of operators between Banach spaces}
\author[D. Sain, S. Sohel and K.Paul]{Debmalya Sain, Shamim Sohel and Kallol Paul }

\address[Sain]{Department of Mathematics\\ Indian Institute of Information Technology, Raichur\\ Karnataka 584135 \\INDIA}
\email{saindebmalya@gmail.com}

\address[Sohel]{Department of Mathematics, Jadavpur University, Kolkata 700032, India.}
\email{shamimsohel11@gmail.com}

\address[Paul]{Department of Mathematics, Jadavpur University, Kolkata 700032, India.}
\email{kalloldada@gmail.com}

	\begin{abstract}
We explore the $k$-smoothness of bounded linear operators between Banach spaces, using the newly introduced notion of index of smoothness. The characterization of the $k$-smoothness of operators between Hilbert spaces follows as a direct consequence of our study. We also investigate the $k$-smoothness of operators between polyhedral Banach spaces. In particular, we show that the $k$-smoothness of rank $1$ operators between polyhedral spaces depends heavily on the dimension of the corresponding spaces rather than the geometry of the spaces. The results obtained in this article generalize and improve upon the existing results in the $k$-smoothness of operators between Banach spaces.
	\end{abstract}

	\subjclass[2020]{Primary 46B20, Secondary 47B01, 47L05}
	\keywords{$k-$smoothness; M-ideal; extreme contraction; coproximinal subspace; Banach space; polyhedral space}
	
	\thanks{ Debmalya Sain feels elated to acknowledge the inspiring presence of his childhood friend Arijeet Roy Chowdhury in his mathematical journey. The second  author would like to thank  CSIR, Govt. of India, for the financial support in the form of Senior Research Fellowship under the mentorship of Prof. Kallol Paul.} 
	\maketitle

	\section{introduction}

The theory of $k$-smoothness plays an important role in understanding the geometry of Banach spaces. In recent years the study has gained momentum and the $k$-smoothness of bounded linear operators between  Hilbert spaces has been completely characterized in \cite{MDP20,W2}. However,  the $k$-smoothness of operators between  Banach spaces seems much more involved and it is still far from being well-understood. It should be noted that some progress has been made in this direction, for which we refer the readers to \cite{DMP,KS, LR, MP, MDP20, MPS}. In this article, we continue the study of the $k$-smoothness of operators between arbitrary Banach spaces. This gives us a better insight into the geometric structures of the unit sphere of  operator spaces. Before proceeding further, let us fix the relevant notations and terminologies.\\

We use the symbols $ \mathbb{X}, \mathbb{Y}$ to denote Banach spaces  over the field  $\mathbb{K},$ where  $\mathbb{K}$ is either the complex field $\mathbb{C}$ or the real field $\mathbb{R}.$  Let $ B_{\mathbb{X}}= \{ x \in \mathbb{X}: \|x\| \leq 1\} $  and $ S_{\mathbb{X}}= \{ x \in \mathbb{X}: \|x\| = 1\} $ denote  the unit ball and  unit sphere of $\mathbb{X},$ respectively. The  dual space of $ \mathbb{X}$ is denoted by  $ \mathbb{X}^*.$  Let $ \mathbb{L}(\mathbb{X}, \mathbb{Y})$ and $ \mathbb{K}(\mathbb{X}, \mathbb{Y}) $ denote the Banach space of all bounded linear operators and compact linear operators from $\mathbb{X}$ to $\mathbb{Y},$ respectively.   If $\mathbb{X}= \mathbb{Y},$ then we write $\mathbb{L}(\mathbb{X}, \mathbb{Y})= \mathbb{L}(\mathbb{X})$  and  $ \mathbb{K}(\mathbb{X}, \mathbb{Y}) = \mathbb{K}(\mathbb{X}).$ 

 For a  non-empty convex subset $A$ of $  \mathbb{X},$ an element $ z \in A$ is said to be an extreme point of $A$, whenever  $z = (1-t)x + ty,$ for some $t \in (0, 1)$ and $ x, y \in A,$ implies that $ x = y = z.$ 
The set of all extreme points of a convex set $A$ is denoted by $Ext(A).$ The space $\mathbb{X}$ is said to be strictly convex if $Ext(B_{\mathbb{X}})= S_{\mathbb{X}},$ i.e., if every element of the unit sphere is an extreme point of the unit ball. A finite-dimensional space $\mathbb{X}$ is said to be a polyhedral if $B_{\mathbb{X}}$ is a polyhedron i.e., if $Ext(B_{\mathbb{X}})$ is finite.
An operator $T \in \mathbb{L}(\mathbb{X}, \mathbb{Y})$ is said to be an extreme contraction if $T$ is an extreme point of $ B_{\mathbb{L}(\mathbb{X}, \mathbb{Y})}.$ For $T\in \mathbb{L}(\mathbb{X},\mathbb{Y}),~M_T$ denotes the collection of all unit vectors of $\mathbb{X}$ at which $T$ attains its norm, i.e., $M_T=\{x\in S_\mathbb{X}:\|Tx\|=\|T\|\}$.  Also, the  kernel of $T,$ denoted by $ker~T,$ is defined as $ker~T:=\{ x \in \mathbb{X} : Tx= 0 \in \mathbb{Y}\}.$  An element $ f \in S_{\mathbb{X}^*}$ is said to be a support functional at $0 \neq x \in \mathbb{X}$ if $f(x)= \|x\|.$ The set of all support functionals at $x \in S_{\mathbb{X}},$  denoted by $J(x),$ is defined as $J(x)= \{ f \in S_{\mathbb{X}^*}: f(x)= 1\}.$ It is easy to observe that $J(x)$ is a closed weak$^*$- compact  convex subset of $\mathbb{X}^*.$ An element $x \in S_\mathbb{X}$ is said to be smooth if $J(x)$ is singleton. The space $\mathbb{X}$  is said to be smooth if each $x \in S_{\mathbb{X}}$ is smooth.  
We say that $x \in S_{\mathbb{X}}$ is $k$-smooth if $dim~span ~J(x)=k.$  We say that the order of smoothness of $x \in S_{\mathbb{X}}$ is $k,$ if $x$ is $k$-smooth.
The notion of $k$-smoothness or multi-smoothness was introduced by Khalil and Saleh  \cite{KS} as a generalized notion of smoothness. 
Our aim in this article is to study the $k$-smoothness of an operator $T$  in $ \mathbb{L}(\mathbb{X}, \mathbb{Y}).$  As we will see, the roles of  $J(T)$ and  $Ext(J(T))$ are very important in the whole scheme of things. The following lemma is useful in this connection and will be used repeatedly. Let us first recall that  a closed subspace $\mathbb{V}$ of $\mathbb{X}$ is said to be an $M$-ideal of $\mathbb{X}$ if $\mathbb{X}^*= \mathbb{V}^* \oplus_1 \mathbb{V}^\perp,$ where $\mathbb{V}^\perp = \{ x^* \in \mathbb{X}^*: \mathbb{V} \subset ker~x^*\}$ and if $x^*= x_1^*+ x_2^*$ is the unique decomposition of $x^*$ in $\mathbb{X}^*,$ then $\|x^*\|= \|x_1^*\|+ \|x_2^*\|.$

\begin{lemma}\cite[Lemma 3.1]{W}\label{wojcik}
	Suppose that $\mathbb{X}$ is a reflexive Banach space and $\mathbb{Y}$ is a Banach space. Also assume that $\mathbb{K}(\mathbb{X},\mathbb{Y})$ is an $M$-ideal in $\mathbb{L}(\mathbb{X},\mathbb{Y}).$ Let $T\in \mathbb{L}(\mathbb{X},\mathbb{Y}), \|T\|=1$ and  dist$(T,\mathbb{K}(\mathbb{X},\mathbb{Y}))<1.$  Then $M_T\cap Ext(B_\mathbb{X})\neq \emptyset$ and 
	\[Ext(J(T))=\{y^*\otimes x\in \mathbb{K}(\mathbb{X},\mathbb{Y})^*:x\in M_T\cap Ext(B_{\mathbb{X}}), y^*\in Ext ~J(Tx)\},\]
	where  $y^*\otimes x: \mathbb{K}(\mathbb{X},\mathbb{Y})\to \mathbb{K}$ is defined by $y^*\otimes x(S)=y^*(Sx)$ for every $S\in \mathbb{K}(\mathbb{X},\mathbb{Y}).$
\end{lemma}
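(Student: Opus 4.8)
The proof rests on two structural facts that the hypotheses make available. First, since $\mathbb{K}(\mathbb{X},\mathbb{Y})$ is an $M$-ideal in $\mathbb{L}(\mathbb{X},\mathbb{Y})$, the dual decomposes as an $\ell_1$-direct sum
\[
\mathbb{L}(\mathbb{X},\mathbb{Y})^* = W \oplus_1 \mathbb{K}(\mathbb{X},\mathbb{Y})^\perp,
\]
where $W$ is an $L$-summand that the restriction map carries isometrically onto $\mathbb{K}(\mathbb{X},\mathbb{Y})^*$. Second, I would invoke the known description of the extreme points of the dual ball of the space of compact operators with reflexive domain: for $\mathbb{X}$ reflexive one has $Ext\big(B_{\mathbb{K}(\mathbb{X},\mathbb{Y})^*}\big) = \{\, y^*\otimes x : x\in Ext(B_{\mathbb{X}}),\ y^*\in Ext(B_{\mathbb{Y}^*})\,\}$. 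Throughout I will use two elementary facts about faces: $J(T)$ is a nonempty weak$^*$-compact face of $B_{\mathbb{L}(\mathbb{X},\mathbb{Y})^*}$, so that $Ext(J(T)) = J(T)\cap Ext\big(B_{\mathbb{L}(\mathbb{X},\mathbb{Y})^*}\big)$, and likewise $Ext(J(Tx)) = J(Tx)\cap Ext(B_{\mathbb{Y}^*})$.

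The crucial step, and the only place where the distance hypothesis enters, is to show that $J(T)\subseteq W$. Given $\phi\in J(T)$, write $\phi = \psi + \rho$ with $\psi\in W$, $\rho\in \mathbb{K}(\mathbb{X},\mathbb{Y})^\perp$ and $\|\phi\| = \|\psi\|+\|\rho\| = 1$. Since $\rho$ annihilates $\mathbb{K}(\mathbb{X},\mathbb{Y})$, for every $K\in\mathbb{K}(\mathbb{X},\mathbb{Y})$ we have $\rho(T)=\rho(T-K)$, so $|\rho(T)|\le \|\rho\|\,\|T-K\|$; taking the infimum over $K$ gives $|\rho(T)|\le \|\rho\|\,\operatorname{dist}(T,\mathbb{K}(\mathbb{X},\mathbb{Y}))$. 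Writing $d=\operatorname{dist}(T,\mathbb{K}(\mathbb{X},\mathbb{Y}))<1$ and combining with $1=\phi(T)=\psi(T)+\rho(T)\le \|\psi\|+|\rho(T)|$, I obtain $1\le (1-\|\rho\|)+\|\rho\|d$, i.e. $\|\rho\|(1-d)\le 0$, forcing $\rho=0$. Hence $\phi\in W$ and $J(T)\subseteq W$.

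Since $W$ is one of the two $L$-summands, the extreme points of $B_{\mathbb{L}(\mathbb{X},\mathbb{Y})^*}$ lying in $W$ are exactly $Ext(B_W)$, which the restriction isometry identifies with $Ext\big(B_{\mathbb{K}(\mathbb{X},\mathbb{Y})^*}\big)$. Combining this with $J(T)\subseteq W$ and the face identity gives
\[
Ext(J(T)) = \{\, y^*\otimes x : x\in Ext(B_{\mathbb{X}}),\ y^*\in Ext(B_{\mathbb{Y}^*}),\ (y^*\otimes x)(T)=1\,\}.
\]
Here I must check that the abstract summand element agrees with the concrete functional $S\mapsto y^*(Sx)$ on all of $\mathbb{L}(\mathbb{X},\mathbb{Y})$, not merely on $\mathbb{K}(\mathbb{X},\mathbb{Y})$: for unit vectors $x,y^*$ a suitable rank-one operator $y_0\otimes f$ shows that $\|y^*\otimes x\|=1$ is already attained on $\mathbb{K}(\mathbb{X},\mathbb{Y})$, so the natural extension to $\mathbb{L}(\mathbb{X},\mathbb{Y})$ has zero $\mathbb{K}(\mathbb{X},\mathbb{Y})^\perp$-component and thus lies in $W$. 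Consequently $(y^*\otimes x)(T)=y^*(Tx)$, and the condition $y^*(Tx)=1$ together with $\|y^*\|=1$ and $\|Tx\|\le 1$ forces $\|Tx\|=1$ (so $x\in M_T$) and $y^*\in J(Tx)$; feeding the face identity $Ext(J(Tx))=J(Tx)\cap Ext(B_{\mathbb{Y}^*})$ back in yields precisely the asserted formula. Finally, $M_T\cap Ext(B_{\mathbb{X}})\ne\emptyset$ follows because $J(T)$ is a nonempty weak$^*$-compact convex set, so $Ext(J(T))\ne\emptyset$ by the Krein--Milman theorem, and any of its elements has the form $y^*\otimes x$ with $x\in M_T\cap Ext(B_{\mathbb{X}})$.

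I expect the main obstacle to be the interface between the abstract $M$-ideal decomposition and the concrete rank-one evaluation functionals. Reflexivity of $\mathbb{X}$ is needed both to guarantee that the extreme points of $B_{\mathbb{K}(\mathbb{X},\mathbb{Y})^*}$ are genuinely indexed by points $x\in\mathbb{X}$ (rather than by functionals on $\mathbb{X}^{*}$) and to ensure that these functionals extend into $W$ with the transparent action $S\mapsto y^*(Sx)$. Establishing this compatibility, and correctly invoking the extreme-point theorem for duals of compact-operator spaces, is where the genuine work lies; by contrast, the localization $J(T)\subseteq W$ and the passage through faces are soft convexity arguments.
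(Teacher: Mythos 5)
This lemma is quoted from W\'ojcik \cite[Lemma 3.1]{W}; the paper itself supplies no proof, so there is nothing internal to compare against. Your reconstruction is correct and follows the standard route of the original argument: the $\ell_1$-decomposition $\mathbb{L}(\mathbb{X},\mathbb{Y})^*=W\oplus_1\mathbb{K}(\mathbb{X},\mathbb{Y})^{\perp}$, the observation that $\operatorname{dist}(T,\mathbb{K}(\mathbb{X},\mathbb{Y}))<1$ forces $J(T)\subseteq W$, the identity $Ext(J(T))=J(T)\cap Ext(B_{\mathbb{L}(\mathbb{X},\mathbb{Y})^*})$ for the face $J(T)$, and the Ruess--Stegall description of $Ext\bigl(B_{\mathbb{K}(\mathbb{X},\mathbb{Y})^*}\bigr)$ for reflexive domain; your check that the canonical functional $S\mapsto y^*(Sx)$ is the unique norm-preserving extension and hence the $W$-representative is exactly the point that needs care, and you handle it properly (only note that the norm of the restriction to $\mathbb{K}(\mathbb{X},\mathbb{Y})$ equals $1$ as a supremum over rank-one operators; it need not be attained when $y^*$ is not norm-attaining, but the argument is unaffected). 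The one genuinely external ingredient is the Ruess--Stegall extreme-point theorem, which you invoke correctly but would need to cite.
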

For  more information on tensor products, the readers are referred to \cite{R}.

The concepts of Birkhoff-James orthogonality and Auerbach basis play an important role in our study. Let us now recall from pioneering article \cite{B, J} that  an element $x \in \mathbb{X}$ is said to be Birkhoff-James orthogonal to $y \in \mathbb{X}$ if $ \| x + \lambda y \| \geq \|x\|$ for all $\lambda \in \mathbb{K}.$  Symbolically, it is written as $ x \perp_B y.$  
	A Schauder basis $\mathcal{B}$ of a Banach space $ \mathbb{X}$ is called an Auerbach basis if for any $ x \in \mathcal{B},$ $\| x\|=1$ and $ x \perp_B span \{ \mathcal{B} \setminus \{ x\}\}.$
	Also  the basis  $\mathcal{B}$ of $\mathbb{X}$ is said to be a strong Auerbach basis of $\mathbb{X}$  if for any $\mathcal{C} \subset \mathcal{B},$ $span ~\mathcal{C}  \perp_B span \{\mathcal{B} \setminus \mathcal{C} \} .$ In the Banach space $\ell_p^n(\mathbb{R})$, it is easy to see that $ \{ e_1, e_2, \ldots, e_n\}$ is a strong Auerbach basis, where $ e_i = (0, \ldots, 0, 1, 0,\ldots,  0)$ with $1$ in $i$-th position.

Let 
$((  \alpha_i \beta_j))_{1 \leq i \leq n, 1 \leq j \leq p }$   denote the $np$-tuple of scalars 
$$ ( \alpha_1 \beta_1 , \alpha_1 \beta_2, \ldots, \alpha_1 \beta_p, \alpha_2 \beta_1, \alpha_2 \beta_2, \ldots, \alpha_2 \beta_p, \ldots, \alpha_n \beta_1, \alpha_n \beta_2,  \ldots, \alpha_n \beta_p),$$ 
which is an element of $\mathbb{K}^{np}.$ Moreover,  $\widetilde{e_{ij}}= ( 0, 0 \ldots, 0,1, 0, \ldots, 0,0) \in \mathbb{K}^{np}$ where $1$ is in $ij$-th position. Clearly $\{ \widetilde{e_{ij}}: 1 \leq i \leq n, 1 \leq j \leq p\}$ is the standard ordered basis of $\mathbb{K}^{np}.$ To study the $k$-smoothness of an operator, we introduce  the following definition.

\begin{definition}\label{definition:index}
	Let $\mathbb{X}, \mathbb{Y}$ be  Banach spaces and let $T \in S_{\mathbb{L}(\mathbb{X}, \mathbb{Y})}.$  Let $R$ be  a subset of $S_\mathbb{X}$ such that $R \cap Ext(B_{\mathbb{X}}) \neq \emptyset.$ Let  $ \{ v_1, v_2, \ldots, v_n \}$ be a  basis of $ span~R.$
	Suppose that   $\mathbb{W} =span \{ y^* \in  J(Tv), v \in  R \cap Ext(B_{\mathbb{X}})\}$ is a finite-dimensional subspace of $\mathbb{Y}^*$ and  let $\{ y^*_1, y^*_2, \ldots, y^*_p\}$ be  a basis of $\mathbb{W}.$ Then the index of smoothness of $T$ with respect to $R,$
	denoted by $ i_{R}(T),$ is defined as the dimension of $Z,$ where 
	$ Z=  span \bigg\{(( \alpha_i \beta_j))_{ 1 \leq i \leq n, 1 \leq j \leq p} \in \mathbb{K}^{pn}:  \sum_{i=1}^n \alpha_i v_i \in R \cap Ext(B_{\mathbb{X}}),  \sum_{j=1}^{p}\beta_j y^*_j \in Ext(J(\sum_{i=1}^{n} \alpha_i T v_i))
	\bigg\}.$
\end{definition}

Later,  we will show  that the index of smoothness of $T$ with respect to $R$   depends neither on the choice of basis of $span ~R$ nor on  the basis of $\mathbb{W.}$ The present article is divided into three sections including the introductory one. In  section I, we characterize the $k$-smoothness of an operator in  terms of our newly introduced index of smoothness. We also characterize  the $k$-smoothness of an operator on a strictly convex, smooth Banach space whose norm attainment set is the unit sphere of a subspace. In section II, we study the $k$-smoothness of operators between polyhedral Banach spaces. Here we also observe some interesting properties of $rank~1$ operators, which provide  better insights into the geometry of operator space.  Surprisingly, we will see that the $k$-smoothness of a rank $1$ operator between finite-dimensional polyhedral Banach spaces depends heavily on the dimensions of the spaces rather than the geometry. We end this section by recalling the definition of coproximinal subspace, which arises naturally in our study.

	\begin{definition}\cite{FF, PS}
		Let $\mathbb{X}$ be a Banach space and let $\mathbb{Y}$ be a subspace of $\mathbb{X}$. Given any $x \in \mathbb{X},$ we say that $y_0 \in \mathbb{Y}$ is a best coapproximation to $x$ out of $\mathbb{Y}$ if $\|y_0-y\| \leq \| x-y\|,$
		for all $y \in \mathbb{Y}.$  A subspace $\mathbb{Y}$ of the Banach space $\mathbb{X}$ is said to
		be coproximinal if a best coapproximation to any element of $\mathbb{X}$ out of $\mathbb{Y}$ exists.
	\end{definition}
	 
	 It should be noted that $y_0 \in \mathbb{Y}$ is a best coapproximation to $x \in \mathbb{X}$ out of $\mathbb{Y}$ if and only if $\mathbb{Y} \perp_B (x- y_0).$
	 
	\section{main result}

\section*{Section-I}

We begin this section with a result that characterizes  extreme support functionals of an operator, under some additional conditions. For this purpose, we require the following fundamental characterization of Birkhoff-James orthogonality by James \cite{J}. 

\begin{lemma}\cite{J}\label{J}
Let $\mathbb{X}$ be a Banach space and let $ x, y \in \mathbb{X}.$ Then $x \perp_B y$ if and only if there exists $f \in S_{\mathbb{X}^*}$ such that $f(x)=\|x\|$ and $ f(y)=0. $
\end{lemma}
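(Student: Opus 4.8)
The plan is to establish the two implications separately, with essentially all of the content lying in the forward direction; throughout I would assume $x \neq 0$, which is the generic case of interest.

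For the sufficiency, I would argue directly from the definition of Birkhoff--James orthogonality. If $f \in S_{\mathbb{X}^*}$ satisfies $f(x) = \|x\|$ and $f(y) = 0$, then for every $\lambda \in \mathbb{K}$,
\[
\|x + \lambda y\| \geq |f(x + \lambda y)| = |f(x) + \lambda f(y)| = |f(x)| = \|x\|,
\]
the first inequality being merely the bound $\|f\| = 1$ applied to $x + \lambda y$. As $\lambda$ is arbitrary, this is precisely $x \perp_B y$, so no geometric input beyond the norm-one estimate on $f$ is required here.

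For the necessity, suppose $x \perp_B y$. My strategy is to build the required functional first on the subspace $\mathbb{M} = span\{x, y\}$ and then extend it to all of $\mathbb{X}$ by the Hahn--Banach theorem. If $x$ and $y$ are linearly dependent, say $y = cx$, then choosing $\lambda = -1/c$ when $c \neq 0$ would force $\|x + \lambda y\| = 0 < \|x\|$, contradicting orthogonality; hence $y = 0$ and any Hahn--Banach norming functional at $x$ does the job. Otherwise $x, y$ are linearly independent and I may define a functional $f_0 \colon \mathbb{M} \to \mathbb{K}$ without ambiguity by $f_0(a x + b y) = a \|x\|$, so that $f_0(x) = \|x\|$ and $f_0(y) = 0$ hold by construction.

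The crux is to verify that $\|f_0\| \leq 1$ on $\mathbb{M}$, and this is exactly where the orthogonality hypothesis enters. For $a = 0$ there is nothing to check, while for $a \neq 0$ I would factor out $a$ and invoke the defining inequality of $x \perp_B y$ with the scalar $b/a$:
\[
\|a x + b y\| = |a|\,\Bigl\|x + \tfrac{b}{a}\, y\Bigr\| \geq |a|\,\|x\| = \bigl|f_0(a x + b y)\bigr|.
\]
Together with $f_0(x/\|x\|) = 1$ this yields $\|f_0\| = 1$. Extending $f_0$ to some $f \in \mathbb{X}^*$ with $\|f\| = 1$ via Hahn--Banach then produces the desired $f \in S_{\mathbb{X}^*}$ with $f(x) = \|x\|$ and $f(y) = 0$. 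I expect the only genuine subtlety to be ensuring that $f_0$ is truly $\mathbb{K}$-linear and that the extension is carried out with the version of Hahn--Banach appropriate to the real or complex scalar field, so that the argument remains valid uniformly over $\mathbb{K}$; the substantive idea is simply the passage from the two-dimensional estimate to a global functional.
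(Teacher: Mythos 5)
The paper gives no proof of this lemma---it is quoted directly from James \cite{J}---so your argument stands on its own, and it is correct: the sufficiency direction is the elementary estimate $\|x+\lambda y\|\ge |f(x+\lambda y)|=\|x\|$, and the necessity direction is the standard construction of the functional $f_0(ax+by)=a\|x\|$ on the span of $x$ and $y$, whose norm bound $\lvert f_0(ax+by)\rvert=|a|\,\|x\|\le\|ax+by\|$ is exactly the orthogonality hypothesis, followed by a norm-preserving Hahn--Banach extension. The only degenerate point, which you already flag by assuming $x\ne 0$, is that for $x=0$ the statement can fail (e.g.\ when $\mathbb{X}$ is one-dimensional and $y\ne 0$, no norm-one functional annihilates $y$), but this is irrelevant to every use of the lemma in the paper, where it is always applied at a norm-one vector.
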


\begin{theorem}\label{BhatiaSemrl1}
	Let $\mathbb{X}$ be a reflexive strictly convex Banach space and let $\mathbb{Y}$ be a smooth Banach space. Suppose  that $T \in S_{\mathbb{L}(\mathbb{X}, \mathbb{Y})}$  and $\mathbb{K}(\mathbb{X}, \mathbb{Y})$ is an $M$-ideal of $\mathbb{L}(\mathbb{X}, \mathbb{Y})$   with  $dist(T, \mathbb{K}(\mathbb{X}, \mathbb{Y})) < 1.$ Then
   $g \in J(T)$ is an element of $Ext(J(T))$ if and only if   there exists a fixed element  $x_0 \in M_T$ such that $Tx_0 \perp_B Ax_0,$ for every $A \in ker~g.$  
	\end{theorem}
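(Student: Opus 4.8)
The plan is to read off the characterization from Lemma \ref{wojcik}, whose hypotheses are met verbatim, after simplifying its conclusion using the strict convexity of $\mathbb{X}$ and the smoothness of $\mathbb{Y}$. Since $\mathbb{X}$ is strictly convex, $Ext(B_\mathbb{X})=S_\mathbb{X}$, so $M_T\cap Ext(B_\mathbb{X})=M_T$. Since $\mathbb{Y}$ is smooth and $\|Tx\|=\|T\|=1$ for $x\in M_T$, the set $J(Tx)$ is a singleton, say $J(Tx)=\{y_x^*\}$, whence $Ext~J(Tx)=\{y_x^*\}$. Thus Lemma \ref{wojcik} reduces to
\[ Ext(J(T))=\{\, y_x^*\otimes x : x\in M_T \,\}, \]
where $y_x^*$ is the unique support functional at $Tx$. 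I would use throughout the following consequence of James's Lemma \ref{J} and smoothness of $\mathbb{Y}$: for $x\in M_T$ and any $A\in\mathbb{L}(\mathbb{X},\mathbb{Y})$, one has $Tx\perp_B Ax$ if and only if $y_x^*(Ax)=0$, because $y_x^*$ is the only norming functional at the unit vector $Tx$.

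For the forward implication, I would suppose $g\in Ext(J(T))$. By the displayed formula there is $x_0\in M_T$ with $g=y_{x_0}^*\otimes x_0$. If $A\in ker~g$, then $y_{x_0}^*(Ax_0)=g(A)=0$, which by the equivalence above is exactly $Tx_0\perp_B Ax_0$. Hence this same $x_0$ witnesses the right-hand side.

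For the converse, I would assume there is a fixed $x_0\in M_T$ with $Tx_0\perp_B Ax_0$ for every $A\in ker~g$, and set $h:=y_{x_0}^*\otimes x_0$. By the displayed formula $h\in Ext(J(T))$, and $h(T)=y_{x_0}^*(Tx_0)=1=g(T)$, so both $g,h\in J(T)$. The orthogonality hypothesis, via the equivalence above, says precisely that $y_{x_0}^*(Ax_0)=0$, i.e. $h(A)=0$, for every $A\in ker~g$; that is, $ker~g\subseteq ker~h$. For linear functionals this forces $h=cg$ for some scalar $c$, and evaluating at $T$ gives $c=h(T)=1$. Therefore $g=h\in Ext(J(T))$, which finishes the argument.

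The main obstacle is the converse direction: the crux is recognizing that the single geometric condition $Tx_0\perp_B Ax_0$ for all $A\in ker~g$ is strong enough to determine $g$ completely. This is exactly where smoothness of $\mathbb{Y}$ is essential, since it upgrades the orthogonality into the linear relation $y_{x_0}^*(Ax_0)=0$ and thereby converts ``$Tx_0\perp_B Ax_0$ on $ker~g$'' into the kernel containment $ker~g\subseteq ker~h$; the elementary fact that nested kernels of functionals force proportionality, normalized by the common value $1$ at $T$, then yields $g=h$. Without smoothness one could only assert that \emph{some} norming functional at $Tx_0$ annihilates $Ax_0$, and $h$ would no longer be canonically pinned down.
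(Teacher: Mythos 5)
Your proof is correct and follows essentially the same route as the paper: both directions are read off from Lemma \ref{wojcik} after using strict convexity of $\mathbb{X}$ and smoothness of $\mathbb{Y}$ to reduce $Ext(J(T))$ to $\{y_x^*\otimes x: x\in M_T\}$, with James's lemma translating $Tx_0\perp_B Ax_0$ into $y_{x_0}^*(Ax_0)=0$. Your only cosmetic difference is that you spell out the kernel-containment/proportionality step ($\ker g\subseteq\ker h$ plus $g(T)=h(T)=1$ forces $g=h$) that the paper leaves implicit in its phrase ``this shows that $g=y_0^*\otimes x_0$.''
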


\begin{proof}
	Observe that by the strict convexity of $\mathbb{X},$ $ M_T \cap Ext(B_{\mathbb{X}}) = M_T$ and by the smoothness of $\mathbb{Y},$ we get that $J(y) $ is a singleton set for each $ y\in \mathbb{Y}.$  Using Lemma \ref{wojcik}, we obtain 
 \[ Ext\,(J(T))= \{ y^* \otimes x: x \in M_T, \{y^*\} =     J(Tx)  \}. \]
  We first prove the sufficient part of the theorem. Suppose that $ g \in J(T)$  is such that there exists a fixed element $x_0 \in M_T$  satisfying $Tx_0 \perp_B Ax_0,$ for every  $ A \in ker\,g.$  Since $Tx_0 \perp_B Ax_0$ and $\mathbb{Y}$ is smooth,  using Lemma \ref{J} we get a bounded linear functional  $y_0^*$ on $\mathbb{Y}$ such that $y_0^*(Tx_0) = \|Tx_0\|=1$ and $y_0^*(Ax_0) =0.$  Thus $y_0^*\otimes x_0(T) = \|T\| = g(T) $ and $y_0^* \otimes x_0 (A) =0$ for every $A \in ker\,g.$  This shows that $g = y_0^* \otimes x_0,$ where $ x_0 \in M_T$ and $J(Tx_0) = \{y_0^*\}$ and so $g \in Ext\,(J(T)).$ For the necessary part, we suppose that  $g \in Ext\,(J(T)).$ So, $g$ can be written as $g = y_0^* \otimes x_0,$
 for some $x_0\in M_T$ and $\{y_0^* \}=J(Tx).$ Then  $ y_0^*(Tx_0) = \|Tx_0\|$ and $y_0^* (Ax_0)= y_0^* \otimes x_0 (A)= g(A)= 0,$  for every $ A \in ker \,g.$ Using Lemma \ref{J}, we conclude that  $Tx_0 \perp_B A x_0,$ for every $A \in ker \,g.$
\end{proof}

The above theorem fulfills the purpose of characterizing the extreme support functionals of a bounded linear operator in a special setting.  Moreover, it provides two strengthened versions of  the Bhatia-\v Semrl Theorem \cite[Th. 1.1]{BS} on Banach spaces, under these conditions. Let us first recall the Bhatia-\v Semrl Theorem.

\begin{theorem}(Bhatia-\v Semrl Theorem)
		Let $ \mathbb{H}$ be a finite-dimensional Hilbert space and let  $T, A \in \mathbb{L}(\mathbb{H}).$ Then $T \perp_B A$ if and only if there exists $x \in  M_T$ such	that $\langle Tx, Ax \rangle =0.$
	\end{theorem}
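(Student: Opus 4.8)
The plan is to deduce the theorem from the machinery already developed, reducing everything to the description of $Ext(J(T))$ together with the convexity of a suitable numerical range. Throughout I may assume $\|T\|=1$, since both $T\perp_B A$ and the condition ``$\langle Tx,Ax\rangle=0$ on $M_T$'' are unchanged upon rescaling $T$. For the easy (sufficiency) direction, if $x\in M_T$ satisfies $\langle Tx,Ax\rangle=0$, then the functional $g_x\in\mathbb{L}(\mathbb{H})^*$ given by $g_x(S)=\langle Sx,Tx\rangle$ lies in $J(T)$ and satisfies $g_x(A)=\overline{\langle Tx,Ax\rangle}=0$; James' criterion (Lemma \ref{J}) applied in $\mathbb{L}(\mathbb{H})$ then yields $T\perp_B A$.

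For the necessity, the first step is again Lemma \ref{J}: from $T\perp_B A$ I obtain a support functional $g\in J(T)$ with $g(A)=0$. The second step is to invoke the structure of the extreme support functionals. Since $\mathbb{H}$ is finite-dimensional, $\mathbb{K}(\mathbb{H})=\mathbb{L}(\mathbb{H})$, so $\mathbb{K}(\mathbb{H})$ is trivially an $M$-ideal in $\mathbb{L}(\mathbb{H})$ with $dist(T,\mathbb{K}(\mathbb{H}))=0<1$; moreover $\mathbb{H}$ is reflexive, strictly convex and smooth. Hence Lemma \ref{wojcik} (equivalently Theorem \ref{BhatiaSemrl1}) applies and gives $Ext(J(T))=\{g_x:x\in M_T\}$ with $g_x(S)=\langle Sx,Tx\rangle$ (here $M_T\cap Ext(B_\mathbb{H})=M_T$ by strict convexity and $J(Tx)=\{\langle\cdot,Tx\rangle\}$ by smoothness). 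As $J(T)$ is a compact convex subset of the finite-dimensional space $\mathbb{L}(\mathbb{H})^*$, Minkowski's theorem together with Carath\'eodory's theorem lets me write $g=\sum_{i=1}^m\lambda_i g_{x_i}$ with $x_i\in M_T$, $\lambda_i>0$ and $\sum_i\lambda_i=1$. Evaluating at $A$ and conjugating gives $\sum_{i=1}^m\lambda_i\langle Tx_i,Ax_i\rangle=0$, that is, $0$ lies in the convex hull of $\mathcal{N}:=\{\langle Tx,Ax\rangle:x\in M_T\}$.

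The decisive step is to upgrade ``$0$ belongs to the convex hull of $\mathcal{N}$'' to ``$0\in\mathcal{N}$''; equivalently, to show $\mathcal{N}$ is already convex. Here I use that in a Hilbert space $M_T$ is the unit sphere of the top eigenspace $\mathbb{H}_0=\ker(T^*T-I)$ of $T^*T$ (norm attainment is Rayleigh-quotient maximization). Writing $P_0$ for the orthogonal projection onto $\mathbb{H}_0$ and $B:=P_0A^*T|_{\mathbb{H}_0}$, one has $\langle Tx,Ax\rangle=\langle A^*Tx,x\rangle=\langle Bx,x\rangle$ for $x\in S_{\mathbb{H}_0}=M_T$, so $\mathcal{N}$ is exactly the numerical range of $B$ on $\mathbb{H}_0$. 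In the complex case the Toeplitz--Hausdorff theorem guarantees that this numerical range is convex; in the real case $\langle Bx,x\rangle=\langle\tfrac12(B+B^*)x,x\rangle$, and the spectral theorem for the symmetric part exhibits $\mathcal{N}$ as the interval between its extreme eigenvalues, hence convex. Either way $\mathcal{N}$ equals its own convex hull, so $0\in\mathcal{N}$; that is, there is $x\in M_T$ with $\langle Tx,Ax\rangle=0$, completing the proof.

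I expect the convexity of the numerical range to be the only genuine obstacle: the reduction through James' lemma and Lemma \ref{wojcik} is formal, but passing from $0$ being a convex combination of the values $\langle Tx_i,Ax_i\rangle$ to $0$ being \emph{attained} at a single $x$ is exactly where the Hilbert-space geometry (Toeplitz--Hausdorff, or the spectral theorem) is indispensable. It is worth noting that, via the Hilbert-space identity $u\perp_B v\iff\langle u,v\rangle=0$, the condition $\langle Tx_0,Ax_0\rangle=0$ is precisely $Tx_0\perp_B Ax_0$, which is the form in which Theorem \ref{BhatiaSemrl1} phrases its extreme-point criterion; this makes explicit the sense in which Theorem \ref{BhatiaSemrl1} is a Banach-space strengthening of the present theorem.
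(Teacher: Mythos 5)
Your proof is correct, but note that the paper does not prove this statement at all: it is recalled verbatim from \cite{BS} purely as motivation for Theorems \ref{BhatiaSemrl1} and \ref{BhatiaSemrl2}, so there is no in-paper argument to compare against. What you have written is a legitimate self-contained proof that happens to run through the paper's own machinery: James' criterion (Lemma \ref{J}) to produce $g\in J(T)$ with $g(A)=0$, Lemma \ref{wojcik} (applicable since $\mathbb{K}(\mathbb{H})=\mathbb{L}(\mathbb{H})$ is trivially an $M$-ideal and $\mathbb{H}$ is reflexive, strictly convex and smooth) to identify $Ext(J(T))$ with the functionals $S\mapsto\langle Sx,Tx\rangle$, $x\in M_T$, and then Minkowski--Carath\'eodory to place $0$ in the convex hull of $\{\langle Tx,Ax\rangle:x\in M_T\}$. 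You correctly isolate the one genuinely Hilbertian step --- that $M_T$ is the unit sphere of the top eigenspace of $T^*T$, so this set is a numerical range and is convex by Toeplitz--Hausdorff (or by the spectral theorem for the symmetric part in the real case) --- which is exactly the step that fails in general Banach spaces and explains why the paper's Theorems \ref{BhatiaSemrl1} and \ref{BhatiaSemrl2} need extra hypotheses and only assert orthogonality at a single extreme functional's witness $x_0$. The original argument of Bhatia and \v Semrl is different in flavour (it works with singular value decompositions and a perturbation analysis of $\|T+tA\|$ rather than with extreme support functionals), so your route is a genuine alternative; its advantage is that it makes transparent precisely where the Hilbert-space geometry enters and how the theorem sits inside the framework of $Ext(J(T))$ used throughout the paper.
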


Although the Bhatia- \v Semrl Theorem has been presented in the setting of Hilbert spaces, it has also been studied  in the Banach space setting by many authors. In \cite[Th. 2.1]{SP}, the authors generalized the above theorem for linear operators on finite-dimensional real Banach spaces:
 
\begin{theorem} \cite[Th. 2.1]{SP}
 Let $\mathbb{X}$ be a finite-dimensional real Banach space. Let $T \in \mathbb{L}(\mathbb{X})$ be such that $M_T = \pm D$, where $D$ is a closed, connected subset of $S_{\mathbb{X}}$. Then for $A \in \mathbb{L}(\mathbb{X})$ with $T \perp_B A$, there exists $x \in D$ such that $Tx \perp_B Ax$.
 \end{theorem}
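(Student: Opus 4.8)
The plan is to split the two-sided relation $T\perp_B A$ into two one-sided conditions and then use the connectedness of $D$ to glue them at a single point. For $u,w\in\mathbb{X}$ write $u\perp_B^+ w$ if $\|u+\lambda w\|\geq\|u\|$ for all $\lambda\geq 0$, and $u\perp_B^- w$ if the same holds for all $\lambda\leq 0$. Since $\lambda\mapsto\|u+\lambda w\|$ is convex, $u\perp_B w$ holds precisely when both $u\perp_B^+ w$ and $u\perp_B^- w$ hold; indeed $u\perp_B^+ w$ is equivalent to the right derivative of this convex function being nonnegative at $0$, and $u\perp_B^- w$ to the left derivative being nonpositive. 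After normalizing $\|T\|=1$, the theorem becomes the problem of producing one $x\in D$ for which $Tx\perp_B^+ Ax$ and $Tx\perp_B^- Ax$ simultaneously.

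Next I would extract two (a priori different) seed points. Put $\phi(\lambda)=\|T+\lambda A\|$, a convex function with a global minimum at $\lambda=0$ because $T\perp_B A$, so that $\phi'_+(0)\geq 0\geq\phi'_-(0)$. In finite dimensions $\phi(\lambda)=\max_{x\in S_\mathbb{X}}\|Tx+\lambda Ax\|$ is a maximum over the compact set $S_\mathbb{X}$ of convex functions of $\lambda$ whose maximizer set at $\lambda=0$ is exactly $M_T$; by the envelope (Danskin) theorem its one-sided derivatives at $0$ equal $\max_{x\in M_T}\max_{f\in J(Tx)}f(Ax)$ and $\min_{x\in M_T}\min_{f\in J(Tx)}f(Ax)$. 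From $\phi'_+(0)\geq 0$ I obtain some $x_1\in M_T$ with $Tx_1\perp_B^+ Ax_1$, and from $\phi'_-(0)\leq 0$ some $x_2\in M_T$ with $Tx_2\perp_B^- Ax_2$. Because $\|-v\|=\|v\|$, the relations $\perp_B^{\pm}$ are invariant under $(u,w)\mapsto(-u,-w)$, so, replacing a seed by its negative if needed and using $M_T=\pm D$, I may take $x_1,x_2\in D$. (One could instead avoid derivatives: by Lemma \ref{J} pick $g\in J(T)$ with $g(A)=0$, write $g$ through the extreme points of $J(T)$, which in finite dimensions are of the form $f\otimes x$ with $x\in M_T$ and $f\in J(Tx)$, analogous to Lemma \ref{wojcik}, and observe that $\int f(Ax)\,d\mu=0$ forces the integrand to be nonnegative somewhere and nonpositive somewhere.)

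Finally I would run the connectedness argument. Set $P=\{x\in D: Tx\perp_B^+ Ax\}$ and $N=\{x\in D: Tx\perp_B^- Ax\}$. Each is closed in $D$: if $x_n\to x$ with $x_n\in P$, then $\|Tx_n+\lambda Ax_n\|\geq\|Tx_n\|$ for every $\lambda\geq 0$, and passing to the limit (everything depends continuously on $x$, and $\|Tx\|=1$ throughout since $D\subseteq M_T$) gives $x\in P$; the same works for $N$. They cover $D$: writing $m(x)=\min_{f\in J(Tx)}f(Ax)$ and $M(x)=\max_{f\in J(Tx)}f(Ax)$, the inequality $m(x)\leq M(x)$ forbids the simultaneous failure $M(x)<0$ and $m(x)>0$, so every $x\in D$ lies in $P$ or in $N$. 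By the preceding step $P$ and $N$ are nonempty. Since $D$ is connected and $P,N$ are nonempty closed sets covering it, they cannot be disjoint, and any $x\in P\cap N$ satisfies $Tx\perp_B^+ Ax$ and $Tx\perp_B^- Ax$, that is $Tx\perp_B Ax$ with $x\in D$.

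The main obstacle is the seed-extraction step: rigorously justifying the one-sided derivative formula for the operator norm (or, dually, the finite-dimensional description of $Ext(J(T))$ together with the Krein--Milman representation of $g$). Once the two seed points are in hand, the closedness and covering properties of $P$ and $N$ are straightforward, and the connectedness of $D$ delivers the conclusion immediately; the symmetrization reducing the seeds from $M_T=\pm D$ to $D$ is routine.
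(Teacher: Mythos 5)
This statement is quoted in the paper from \cite{SP} without proof, so there is no in-paper argument to compare against; judged on its own merits, your proposal is correct, and it in fact reproduces the strategy of the original proof in \cite{SP}. The decomposition of $\perp_B$ into the one-sided relations $\perp_B^+$ and $\perp_B^-$, the observation that the sets $P=\{x\in D: Tx\perp_B^+Ax\}$ and $N=\{x\in D: Tx\perp_B^-Ax\}$ are closed and cover $D$, and the use of connectedness to force $P\cap N\neq\emptyset$ are exactly the architecture of Sain--Paul's argument (indeed, those one-sided relations were introduced there for this purpose). Where you genuinely differ is in the seed-extraction step: \cite{SP} shows $P\neq\emptyset\neq N$ by a compactness contradiction (if no $x\in M_T$ satisfied $Tx\perp_B^+Ax$, a uniform $\lambda_0>0$ could be found with $\|T+\lambda_0A\|<\|T\|$, contradicting $T\perp_B A$), whereas you obtain the seeds from the one-sided derivatives of $\phi(\lambda)=\|T+\lambda A\|$ via a Danskin-type envelope formula, or alternatively from a Krein--Milman/Carath\'eodory representation of some $g\in J(T)$ with $g(A)=0$ through functionals of the form $f\otimes x$. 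The second alternative is the cleaner route in the context of this paper, since in finite dimensions Lemma \ref{wojcik} applies verbatim ($\mathbb{K}(\mathbb{X},\mathbb{Y})=\mathbb{L}(\mathbb{X},\mathbb{Y})$, so the $M$-ideal and distance hypotheses are vacuous) and immediately yields an index $i$ with $f_i(Ax_i)\geq 0$ and an index $j$ with $f_j(Ax_j)\leq 0$; the Danskin route is also sound but requires citing the nonsmooth (Valadier-type) version of the envelope theorem, as you rightly flag. Your remaining steps --- the symmetrization moving the seeds from $M_T=\pm D$ into $D$, the closedness of $P$ and $N$ as intersections of closed sets, and the covering claim via $m(x)\leq M(x)$ --- are all correct.
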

 The Bhatia-\v Semrl Theorem in the setting of complex Banach spaces has been studied in \cite[Th. ~4.3]{RBS}. We refer  the readers to \cite{S, SP, SRP} and the references therein for further  reading in this context.  A bounded linear operator $T$ is said to satisfy Bhatia-\v Semrl  (B\v S) property \cite{SPH} if for each bounded linear operator $A$, $T \perp_B A$ implies there exists $x \in M_T$ ($x$ depending on $A$)  such that $ Tx \perp_B Ax.$   Investigations of operators  satisfying B\v S property have been conducted in \cite{ KL, RSDP, SRDP}. We next present the promised strengthening of the Bhatia-\v Semrl Theorem, under certain additional conditions.

\begin{theorem}\label{BhatiaSemrl2}
 Let $\mathbb{X}$ be a reflexive strictly convex Banach space and let $\mathbb{Y}$ be a smooth Banach space. Suppose  that $T \in S_{\mathbb{L}(\mathbb{X}, \mathbb{Y})}$  and $\mathbb{K}(\mathbb{X}, \mathbb{Y})$ is an $M$-ideal of $\mathbb{L}(\mathbb{X}, \mathbb{Y})$   with  $dist(T, \mathbb{K}(\mathbb{X}, \mathbb{Y})) < 1.$ Then
 \begin{itemize}
 	\item [(i)] for a subspace  $\mathbb{W}$ of $\mathbb{L}(\mathbb{X}, \mathbb{Y}),$  there exists a fixed $x_0 \in M_T$ such that $Tx_0 \perp_B Ax_0,$ for any $A \in \mathbb{W}$ if and only if $\mathbb{W} \subset ker~g,$ for some $g \in Ext(J(T)).$ 
 	\item [(ii)]  a subspace  $\mathbb{W}$ of $\mathbb{L}(\mathbb{X}, \mathbb{Y})$ satisfies $Tx \perp_B Ax,$ for every  $x \in M_T$ and for every $A \in \mathbb{W}$ if and only if $\mathbb{W} \subset \bigcap_{ g \in Ext(J(T))}ker~g.$ 
 \end{itemize}
  \end{theorem}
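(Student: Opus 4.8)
The plan is to reduce both statements to the explicit description of $Ext(J(T))$ already obtained in the proof of Theorem \ref{BhatiaSemrl1}, namely
\[ Ext(J(T)) = \{ y^* \otimes x : x \in M_T,\ \{y^*\} = J(Tx)\}, \]
which is legitimate here since the hypotheses are identical: strict convexity of $\mathbb{X}$ gives $M_T \cap Ext(B_{\mathbb{X}}) = M_T$, and smoothness of $\mathbb{Y}$ makes each $J(Tx)$ a singleton. The single fact I would isolate before touching either part is the following equivalence, valid for any $x \in M_T$ with $J(Tx) = \{y^*\}$ and any $A \in \mathbb{L}(\mathbb{X},\mathbb{Y})$:
\[ Tx \perp_B Ax \iff y^*(Ax) = 0 \iff (y^* \otimes x)(A) = 0. \]
Indeed, by Lemma \ref{J} the orthogonality $Tx \perp_B Ax$ holds iff some $f \in S_{\mathbb{Y}^*}$ with $f(Tx) = \|Tx\| = 1$ satisfies $f(Ax) = 0$; but smoothness of $\mathbb{Y}$ forces such an $f$ to equal $y^*$, so the orthogonality is equivalent to $y^*(Ax) = 0$, which is precisely $(y^* \otimes x)(A) = 0$ by the definition of the tensor functional.

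With this equivalence in hand, part (i) becomes almost immediate. For the forward implication I would assume a fixed $x_0 \in M_T$ with $Tx_0 \perp_B Ax_0$ for all $A \in \mathbb{W}$; writing $\{y_0^*\} = J(Tx_0)$ and setting $g := y_0^* \otimes x_0$, the equivalence gives $g(A) = 0$ for every $A \in \mathbb{W}$, i.e.\ $\mathbb{W} \subset ker~g$, while $g \in Ext(J(T))$ by the structural description. Conversely, if $\mathbb{W} \subset ker~g$ for some $g \in Ext(J(T))$, then $g = y_0^* \otimes x_0$ with $x_0 \in M_T$ and $\{y_0^*\} = J(Tx_0)$, and $(y_0^* \otimes x_0)(A) = g(A) = 0$ for all $A \in \mathbb{W}$; the equivalence then returns $Tx_0 \perp_B Ax_0$ for every $A \in \mathbb{W}$ with this one fixed $x_0$.

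Part (ii) is the same argument, but quantified over all of $M_T$ rather than a single vector. For the forward direction I would fix an arbitrary $g \in Ext(J(T))$, write $g = y^* \otimes x$ with $x \in M_T$ and $\{y^*\} = J(Tx)$, and feed the hypothesis $Tx \perp_B Ax$ (which holds for this $x \in M_T$) into the equivalence to get $g(A) = 0$ for every $A \in \mathbb{W}$; as $g$ was arbitrary, $\mathbb{W} \subset \bigcap_{g \in Ext(J(T))} ker~g$. For the converse, given any $x \in M_T$ and any $A \in \mathbb{W}$, I would set $\{y^*\} = J(Tx)$ and $g := y^* \otimes x \in Ext(J(T))$; then $A \in \mathbb{W} \subset ker~g$ yields $(y^* \otimes x)(A) = 0$, and the equivalence delivers $Tx \perp_B Ax$.

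I expect no serious obstacle. The one step that genuinely needs care is the uniqueness argument, where smoothness of $\mathbb{Y}$ is invoked to identify the James functional with the unique support functional $y^*$ at $Tx$ — this is exactly what converts the geometric condition $Tx \perp_B Ax$ into the linear condition $g(A) = 0$, and without it the equivalence breaks. The only conceptual difference between the two parts is the placement of the quantifier on $x$: part (i) manufactures or consumes a single $x_0$ attached to one extreme functional, whereas part (ii) ranges over all $x \in M_T$ at once and hence over the entire family $Ext(J(T))$, which is precisely why an intersection of kernels appears there.
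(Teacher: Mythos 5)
Your proposal is correct and follows essentially the same route as the paper: both rest on the description $Ext(J(T))=\{y^*\otimes x: x\in M_T,\ \{y^*\}=J(Tx)\}$ from Lemma \ref{wojcik} (via strict convexity of $\mathbb{X}$ and smoothness of $\mathbb{Y}$) and on converting $Tx\perp_B Ax$ into $(y^*\otimes x)(A)=0$ through Lemma \ref{J}. Your only cosmetic difference is isolating that equivalence as a standalone lemma rather than citing Theorem \ref{BhatiaSemrl1} for the sufficiency in part (i), which is a clean and faithful reorganization of the same argument.
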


\begin{proof}
	Since $\mathbb{X}$ is strictly convex and $\mathbb{Y}$ is smooth, using Lemma \ref{wojcik}, we obtain that
	\[	Ext(J(T))= \bigg\{ y^* \otimes x: x \in M_T, \{y^*\} =     J(Tx)  \bigg\}.	\]
	
(i) The sufficient part of the proof is immediate from Theorem \ref{BhatiaSemrl1}, so we only  prove the necessary part. Suppose that $J(Tx_0)= \{y_0^*\}$ and $g = y_0^* \otimes x_0.$ Since for any $A \in \mathbb{W},$ $Tx_0 \perp_B Ax_0,$ so using Lemma \ref{J}, we get $y_0^*(Ax_0)=0.$ Therefore, for  any $A \in \mathbb{W},$ $g(A)= y_0^* \otimes x_0 (A)= y_0^* (Ax_0) =0,$ which implies that $\mathbb{W} \subset ker~g$ and the fact that $g \in Ext (J(T))$ completes the theorem. \\
 
 (ii) The necessary part follows easily from the proof of (i).  For the converse part, let  $\mathbb{W}$ be a subspace of $\mathbb{L}(\mathbb{X}, \mathbb{Y})$ such that $\mathbb{W} \subset \bigcap_{ g \in Ext(J(T))}ker~g.$    Let $ x \in M_T$ be arbitrary but fixed after choice. Consider $J(Tx)= \{ y^*\} .$  Then $y^* \otimes x$ is an extreme supporting functional at $T.$ So for each $ A \in \mathbb{W},$ we get that $ y^* \otimes x (A) =0 .$  Thus 
 $ y^*(Ax) = y^* \otimes x (A) = 0 $ and $ y^*(Tx) =  y^* \otimes x (T) = \|T\|=1,$ and so by applying Lemma \ref{J}, we conclude that $Tx \perp_B Ax. $  
	\end{proof}
  
\begin{remark}
It is worth mentioning that characterization of $T \perp_B A$ in the Hilbert space  setting was also investigated in \cite{P, PHD}. For a more general study of the same problem, in the setting of complex Banach spaces, the readers are referred to \cite{RBS}. In this connection, we also note that the conjecture posed in \cite{SPH} still remains an open question. The conjecture is stated as : 

\emph{A linear operator $T$ on an $n$-dimensional real normed linear space $\mathbb{X}$
	satisfies the B\v S property if and only if the set of unit vectors on which $T$ attains norm is connected in the corresponding projective space $\mathbb{R} P^{n-1} \equiv S_{\mathbb{X}} \setminus \{x \sim -x \}.$}
\end{remark}

Going back to the study of $k$-smoothness of operators, we recall the following well known result.

\begin{prop}\label{prop1}
	Suppose that $\mathbb{X}, \mathbb{Y}$ are Banach spaces. If $\{ x_1, x_2, \ldots, x_n\}$ is a linearly independent subset of $ \mathbb{X}$ and $ \{ y_1^*, y_2^*, \ldots, y_p^*\}$ is a linearly independent subset of $\mathbb{Y}^*$ then $\{ y_i^* \otimes x_j: 1 \leq i \leq p, 1 \leq j \leq n \}$ is a linearly independent subset of $\mathbb{L}(\mathbb{X}, \mathbb{Y})^*.$ \\
\end{prop}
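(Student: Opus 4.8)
The plan is to establish linear independence directly. Suppose scalars $c_{ij} \in \mathbb{K}$ (for $1 \leq i \leq p$, $1 \leq j \leq n$) satisfy
\[
\sum_{i=1}^{p}\sum_{j=1}^{n} c_{ij}\,(y_i^* \otimes x_j) = 0
\]
as a functional on $\mathbb{L}(\mathbb{X}, \mathbb{Y})$; I would show that every $c_{ij}$ vanishes by evaluating this relation on suitably chosen rank one operators.

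First I would construct two biorthogonal systems. Since $\{x_1, \ldots, x_n\}$ is linearly independent, repeated application of the Hahn--Banach theorem yields functionals $f_1, \ldots, f_n \in \mathbb{X}^*$ with $f_k(x_j) = \delta_{kj}$. Dually, since $\{y_1^*, \ldots, y_p^*\}$ is linearly independent in $\mathbb{Y}^*$, the linear map $y \mapsto (y_1^*(y), \ldots, y_p^*(y))$ from $\mathbb{Y}$ into $\mathbb{K}^p$ is surjective, so one can select vectors $y_1, \ldots, y_p \in \mathbb{Y}$ with $y_i^*(y_m) = \delta_{im}$.

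Next, for fixed indices $k \in \{1, \ldots, n\}$ and $m \in \{1, \ldots, p\}$, I would consider the rank one operator $S_{km} \in \mathbb{L}(\mathbb{X}, \mathbb{Y})$ defined by $S_{km}(x) = f_k(x)\, y_m$. Evaluating the hypothesized zero functional on $S_{km}$ gives
\[
0 = \sum_{i=1}^{p}\sum_{j=1}^{n} c_{ij}\, y_i^*(S_{km} x_j) = \sum_{i=1}^{p}\sum_{j=1}^{n} c_{ij}\, f_k(x_j)\, y_i^*(y_m) = \sum_{i=1}^{p}\sum_{j=1}^{n} c_{ij}\, \delta_{kj}\, \delta_{im} = c_{mk}.
\]
Since $k$ and $m$ range over all admissible values, all coefficients vanish, which establishes the claim.

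The only point deserving care — rather than a genuine obstacle — is the existence of the dual vectors $y_1, \ldots, y_p$ biorthogonal to the given functionals. This rests on the elementary fact that linear independence of finitely many functionals forces the evaluation map into $\mathbb{K}^p$ to be onto, equivalently that no nontrivial linear combination of the $y_i^*$ is the zero functional; a standard inductive argument (peeling off one functional at a time) supplies the required $y_m$. Everything else is a direct computation once the two biorthogonal families are in hand.
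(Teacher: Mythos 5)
Your proof is correct. The paper states this proposition without proof, as a well-known fact; your argument --- testing a vanishing linear combination against the rank-one operators $S_{km}=f_k(\cdot)\,y_m$ built from two biorthogonal systems --- is the standard one and is essentially the same device the authors use in their proof of Proposition \ref{minimum} (there they map $x_t$ to Hamel-basis vectors of $\mathbb{Y}$ and invoke independence of the functionals at the end, rather than pre-selecting biorthogonal vectors $y_m$; the two variants are interchangeable).
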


In the next proposition we give an estimation of the number of linearly independent elements in $Ext (J(T))$ for a bounded linear operator on a Banach space, not necessarily finite-dimensional. See \cite[Th. 2.2]{MP} for an  analogous result on finite-dimensional Banach spaces.

\begin{prop}\label{minimum}
	Let $\mathbb{X}$ be a reflexive Banach space and let $\mathbb{Y}$ be any Banach space. Suppose  that $T \in S_{\mathbb{L}(\mathbb{X}, \mathbb{Y})}$  and $\mathbb{K}(\mathbb{X}, \mathbb{Y})$ is an $M$-ideal of $\mathbb{L}(\mathbb{X}, \mathbb{Y})$   with  $dist(T, \mathbb{K}(\mathbb{X}, \mathbb{Y})) < 1.$ Suppose that $\{ x_1, x_2, \ldots, x_r\}$ is a maximal linearly independent set in $M_T \cap Ext(B_{\mathbb{X}})$ and $Tx_i$ is $m_i$-smooth, for $ 1 \leq i \leq r.$ Then $dim~span (J(T)) \geq \sum_{i=1}^r m_i.$
\end{prop}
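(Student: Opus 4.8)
The plan is to use the explicit description of $Ext(J(T))$ from Lemma~\ref{wojcik}, combined with the linear independence criterion in Proposition~\ref{prop1}, to exhibit at least $\sum_{i=1}^r m_i$ linearly independent elements inside $J(T)$. Since $\{x_1,\dots,x_r\}$ is a maximal linearly independent subset of $M_T\cap Ext(B_\mathbb{X})$, each $x_i$ lies in $M_T\cap Ext(B_\mathbb{X})$, so by Lemma~\ref{wojcik} every functional $y^*\otimes x_i$ with $y^*\in Ext\,J(Tx_i)$ belongs to $Ext(J(T))\subset J(T)$. Because $Tx_i$ is $m_i$-smooth, by definition $\dim\,span\,J(Tx_i)=m_i$; hence for each fixed $i$ I can choose $m_i$ functionals $y^*_{i,1},\dots,y^*_{i,m_i}\in Ext\,J(Tx_i)$ whose span has dimension $m_i$, i.e.\ they are linearly independent in $\mathbb{Y}^*$. (Here I use that the span of $J(Tx_i)$ equals the span of its extreme points, which is valid since $J(Tx_i)$ is a weak$^*$-compact convex set and is therefore the weak$^*$-closed convex hull of its extreme points by Krein--Milman, so its linear span is spanned by extreme points.) This produces, for each $i$, a collection $\{y^*_{i,j}\otimes x_i: 1\le j\le m_i\}$ of $m_i$ elements of $J(T)$.

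First I would handle the single-index case: for a fixed $i$, since $x_i$ is a single nonzero vector and $y^*_{i,1},\dots,y^*_{i,m_i}$ are linearly independent, Proposition~\ref{prop1} (applied with the one-element independent set $\{x_i\}$ and the $m_i$-element independent set $\{y^*_{i,j}\}$) gives that $\{y^*_{i,j}\otimes x_i:1\le j\le m_i\}$ is a linearly independent set in $\mathbb{L}(\mathbb{X},\mathbb{Y})^*$. The remaining task is to show that pooling these collections across different $i$ still yields a linearly independent set of total size $\sum_{i=1}^r m_i$. Since the $x_i$ are linearly independent and, for each $i$, the $y^*_{i,j}$ are linearly independent, I would invoke Proposition~\ref{prop1} once more in the combined form: taking the full independent set $\{x_1,\dots,x_r\}$ together with an independent set of $\mathbb{Y}^*$-functionals, the tensor products $y^*\otimes x_i$ are linearly independent.

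The main obstacle is precisely this pooling step, because Proposition~\ref{prop1} as stated uses a \emph{single} linearly independent family $\{y_1^*,\dots,y_p^*\}$ common to all the $x_j$, whereas here the functionals $y^*_{i,j}$ depend on the index $i$ and the families for different $i$ may overlap or be linearly dependent across indices. To bridge this gap I would first form a maximal linearly independent subset of the union $\bigcup_{i=1}^r\{y^*_{i,1},\dots,y^*_{i,m_i}\}$ inside $\mathbb{Y}^*$, call it $\{z^*_1,\dots,z^*_q\}$, and then argue as follows. Suppose a linear combination $\sum_{i=1}^r\sum_{j=1}^{m_i}c_{i,j}\,(y^*_{i,j}\otimes x_i)=0$ vanishes. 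Evaluating this operator functional structurally, for each fixed $i$ the ``$x_i$-component'' must vanish separately: because the $x_i$ are linearly independent, one can choose test operators of rank one that separate the distinct $x_i$, forcing $\sum_{j=1}^{m_i}c_{i,j}\,y^*_{i,j}=0$ in $\mathbb{Y}^*$ for each $i$; then the linear independence of $\{y^*_{i,j}:1\le j\le m_i\}$ for that fixed $i$ forces all $c_{i,j}=0$.

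To make the separation rigorous I would apply Proposition~\ref{prop1} to the full independent families: extend $\{x_1,\dots,x_r\}$ and use that the whole collection of tensor products, indexed so that the $\mathbb{Y}^*$-part is drawn from a common independent set containing all the needed $y^*_{i,j}$, is linearly independent. Concretely, for each $i$ the restriction of the vanishing combination is a combination of $\{y^*_{i,j}\otimes x_i\}$, and since the $x_i$ differ, Proposition~\ref{prop1} (used with the independent set $\{x_1,\dots,x_r\}$ on one side and a common independent refinement of all the $y^*_{i,j}$ on the other) guarantees that no nontrivial cancellation can occur across different values of $i$. This yields $\sum_{i=1}^r m_i$ linearly independent elements of $J(T)$, whence $\dim\,span\,(J(T))\ge\sum_{i=1}^r m_i$, completing the proof. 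I expect the subtlety of correctly phrasing the common refinement of the $\mathbb{Y}^*$-families and verifying that Proposition~\ref{prop1} applies in this mixed situation to be the delicate point that requires the most care.
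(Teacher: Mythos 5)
Your proposal is correct and follows essentially the same route as the paper: exhibit the functionals $y^*_{i,j}\otimes x_i\in Ext(J(T))$ via Lemma \ref{wojcik} and prove their linear independence by evaluating a vanishing combination on finite-rank test operators that send $x_t$ to an arbitrary vector of $\mathbb{Y}$ and annihilate the other $x_i$, which isolates $\sum_j c_{t,j}y^*_{t,j}=0$ for each $t$. The paper implements the separation with operators $A_{t\alpha}$ built from a Hamel basis of $\mathbb{Y}$ rather than your ``common refinement'' detour, but that detour is not needed once the rank-one separation argument is in place; your Krein--Milman remark supplies a justification the paper leaves implicit.
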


\begin{proof}
	Suppose that $ \{y^*_{i1}, y^*_{i2}, \ldots, y^*_{im_i}\}$ is a linearly independent set in $Ext(J(Tx_i)),$ for any $i, 1 \leq i \leq r.$ Let $H = span~ M_T.$ Clearly, $H$ is a finite-dimensional  subspace of $\mathbb{X}.$ 
	From Lemma \ref{wojcik} we get, 
	\[
	Ext(J(T))= \bigg\{ y^* \otimes x: x \in M_T \cap Ext(B_{\mathbb{X}}), y^* \in Ext( J(Tx))  \bigg\}.
	\]
	Since $ x_i \in M_T \cap Ext(B_{\mathbb{X}})$ and $y^*_{ij} \in Ext(J(Tx_i)),$  then $y^*_{ij} \otimes x_i \in Ext(J(T)),$ for any $ 1\leq i \leq r, 1 \leq j \leq m_i.$  Clearly, $R = \{ y^*_{ij} \otimes x_i: 1\leq i \leq r, 1 \leq j \leq m_i\}$ is a subset of the space  $ \mathbb{Y}^* \otimes H .$ To prove the theorem it is enough to show that the set $R$ is linearly independent. Let $\sum_{i=1}^{r} \sum_{j=1}^{m_i} c_{ij} y^*_{ij} \otimes x_i=0,$ where $c_{ij} \in \mathbb{K},$ for any $1 \leq i \leq r, 1 \leq j \leq m_i.$ Let $\mathcal{B}= \{ e_\alpha: \alpha \in \Lambda\}$ be a Hamel basis of $\mathbb{Y}.$   For any $1 \leq t \leq r, \alpha \in \Lambda,$ we define $A_{t\alpha}: H \rightarrow \mathbb{Y} $ as follows:
	\[
	A_{t\alpha}(x_t)= e_{\alpha} \quad \quad 	A_{t\alpha}(x_i)=0,~ \forall ~ i \in \{ 1 , 2, \ldots, r\} \setminus \{t\}.
	\] 
	Since $H$ is finite-dimensional, clearly $A_{t \alpha}  $ is bounded.
	Then for any $1 \leq t \leq r$ and for any $\alpha \in \Lambda,$ $ \sum_{i=1}^{r} \sum_{j=1}^{m_i} c_{ij} y^*_{ij} \otimes x_i (A_{t\alpha})= 0 \implies \sum_{i=1}^{r}\sum_{j=1}^{m_i} c_{ij} y^*_{ij} (A_{t\alpha}(x_i))=0 \implies \sum_{j=1}^{m_t} c_{tj} y^*_{tj} (A_{t\alpha}(x_t))=0 \implies \sum_{j=1}^{m_t} c_{tj} y^*_{tj}(e_\alpha) =0. $  Therefore, for any $1 \leq t \leq r,$ $\sum_{j=1}^{m_t} c_{tj} y^*_{tj}=0 \implies c_{tj}=0,$ for any $1 \leq j \leq m_t.$ Consequently, $R$ is  linearly independent and this completes the theorem.
\end{proof}

	In the following theorem we characterize the $k$-smoothness of rank $1$ operators.

\begin{theorem}\label{rank1}
		Let $\mathbb{X}$ be a reflexive Banach space and let $\mathbb{Y}$ be any Banach space. Suppose that $T \in S_{\mathbb{L}(\mathbb{X}, \mathbb{Y})}$  is of rank $1.$  Let $\mathbb{K}(\mathbb{X}, \mathbb{Y})$ be an $M$-ideal in $\mathbb{L}(\mathbb{X}, \mathbb{Y})$    and $dist(T, \mathbb{K}(\mathbb{X}, \mathbb{Y})) < 1.$ 
		Let the cardinality of a maximal linearly independent set of $M_T \cap Ext(B_{\mathbb{X}})$ be $n$ and  let $T(M_T)= \{\lambda y : | \lambda | =1 \},$ for some $y \in S_{\mathbb{Y}}.$  Then $T$ is $k$-smooth if and only if $y  $ is $m$-smooth and $mn=k.$
\end{theorem}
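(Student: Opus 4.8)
The plan is to compute $dim~span~J(T)$ directly and show that it always equals $mn$, where $m$ is the order of smoothness of $y$; both implications of the biconditional then drop out at once. Since $\mathbb{X}$ is reflexive, $\mathbb{K}(\mathbb{X},\mathbb{Y})$ is an $M$-ideal in $\mathbb{L}(\mathbb{X},\mathbb{Y})$ and $dist(T,\mathbb{K}(\mathbb{X},\mathbb{Y}))<1$, Lemma \ref{wojcik} applies and yields $M_T\cap Ext(B_{\mathbb{X}})\neq\emptyset$ together with
\[ Ext(J(T)) = \{ y^* \otimes x : x \in M_T \cap Ext(B_{\mathbb{X}}),\ y^* \in Ext(J(Tx))\}. \]
Because $J(T)$ is a weak$^*$-compact convex set, the Krein--Milman theorem gives $J(T)=\overline{\text{conv}}\,Ext(J(T))$ (weak$^*$ closure), whence $span~J(T)=span~Ext(J(T))$. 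So it suffices to determine $dim~span~Ext(J(T))$.

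Next I would exploit the rank-$1$ hypothesis. For every $x\in M_T$ the condition $T(M_T)=\{\lambda y:|\lambda|=1\}$ forces $Tx=\lambda_x y$ for some scalar $\lambda_x$ with $|\lambda_x|=1$. A direct check shows $J(\lambda_x y)=\overline{\lambda_x}\,J(y)$, and since $f\mapsto\overline{\lambda_x}f$ is a linear bijection it preserves extreme points, so $Ext(J(Tx))=\overline{\lambda_x}\,Ext(J(y))$. Using the identity $\overline{\lambda_x}\,z^*\otimes x=z^*\otimes(\overline{\lambda_x}x)$ and the fact that multiplying a generating set by a nonzero scalar does not change its span, the above description collapses to
\[ span~Ext(J(T)) = span\{ z^* \otimes x : x \in M_T \cap Ext(B_{\mathbb{X}}),\ z^* \in Ext(J(y))\}. \]

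Now I would reduce each slot to a finite set. Let $\{x_1,\dots,x_n\}$ be the given maximal linearly independent subset of $M_T\cap Ext(B_{\mathbb{X}})$, so $span~(M_T\cap Ext(B_{\mathbb{X}}))=span\{x_1,\dots,x_n\}$ by maximality, and let $\{z_1^*,\dots,z_m^*\}$ be a basis of $span~J(y)=span~Ext(J(y))$, so that $y$ is $m$-smooth. Since $z^*\otimes x$ is linear in each slot separately, for fixed $z^*$ the span over $x\in M_T\cap Ext(B_{\mathbb{X}})$ equals that of $\{z^*\otimes x_i:1\leq i\leq n\}$, and for fixed $x_i$ the span over $z^*\in Ext(J(y))$ equals that of $\{z_j^*\otimes x_i:1\leq j\leq m\}$. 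Combining,
\[ span~Ext(J(T)) = span\{ z_j^* \otimes x_i : 1 \leq i \leq n,\ 1 \leq j \leq m\}. \]
Finally, as $\{x_1,\dots,x_n\}$ is linearly independent in $\mathbb{X}$ and $\{z_1^*,\dots,z_m^*\}$ is linearly independent in $\mathbb{Y}^*$, Proposition \ref{prop1} shows the $mn$ functionals $z_j^*\otimes x_i$ are linearly independent in $\mathbb{L}(\mathbb{X},\mathbb{Y})^*$. Hence $dim~span~J(T)=mn$, which proves that $T$ is $k$-smooth precisely when $y$ is $m$-smooth and $k=mn$.

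I expect the main obstacle to be the bookkeeping of the unit-modulus scalars $\lambda_x$, which depend on the point $x$: one must verify carefully that, although distinct extreme points in $M_T$ may carry distinct phases, absorbing these phases into the span is legitimate and that the resulting generating family is exactly $\{z_j^*\otimes x_i\}$. A secondary point requiring care is the passage from the possibly infinite set $M_T\cap Ext(B_{\mathbb{X}})$ to its finite maximal linearly independent subset, which is precisely where the linearity of $\otimes$ in the vector slot together with maximality is invoked.
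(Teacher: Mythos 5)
Your proposal is correct and follows essentially the same route as the paper: invoke Lemma \ref{wojcik}, absorb the unimodular phases $\lambda_x$ so that $Ext(J(Tx))$ reduces to (a scalar multiple of) $Ext(J(y))$, expand over the maximal linearly independent set $\{x_1,\dots,x_n\}$ and a basis of $span\,J(y)$, and conclude via the linear independence of the $z_j^*\otimes x_i$ from Proposition \ref{prop1}. The only organizational difference is that you establish the single identity $\dim span\,J(T)=n\cdot\dim span\,J(y)$ and read off both implications at once, whereas the paper argues the sufficient and necessary parts separately (treating the case $\dim span\,J(y)=\infty$ explicitly); your version implicitly covers that case by the same tensor-independence argument.
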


\begin{proof}
	Let $\{ x_1, x_2, \ldots, x_n\}$  be a maximal linearly independent set of $M_T \cap Ext(B_{\mathbb{X}}).$ 
	Observe that   $y^* \in J(y)$ if and only if $\frac{1}{\lambda} y^* \in J(\lambda y),$ whenever $|\lambda|=1.$ Also  $T(M_T)= \{\lambda y : \, | \lambda |  =1 \}.$  So using Lemma \ref{wojcik}, we get
	\begin{eqnarray*}
	span~	Ext(J(T)) &=& span~ \bigg\{ y^* \otimes x: x \in M_T \cap Ext(B_{\mathbb{X}}), y^* \in Ext( J(Tx))  \bigg\} \\
	&=& span~\bigg\{ y^* \otimes x: x \in M_T \cap Ext(B_{\mathbb{X}}), y^* \in Ext( J(y))  \bigg\}. 
\end{eqnarray*}
Let us now prove the sufficient part of the theorem.	Suppose that $y^*_1, y^*_2, \ldots, y^*_m \in S_{\mathbb{Y}^*} $ are  linearly independent extreme support functionals of $y \in S_{\mathbb{Y}}.$ Observe that $y^*_j \otimes x_i \in Ext(J(T)),$ for each $i, j $ where $1 \leq i \leq n $ and $ 1 \leq j \leq m.$ Moreover, $\{ y^*_j \otimes x_i: 1 \leq j \leq m, 1 \leq i \leq n\}$ is linearly independent. Let  $x \in M_T \cap Ext(B_{\mathbb{X}}),~y^* \in Ext(J(y))$ and suppose that $x = \sum_{i=1}^{n} c_i x_i, y^*= \sum_{j=1}^{m} d_j y^*_j,$ for some $c_i, d_j \in \mathbb{K}.$ Then $y^* \otimes x \in Ext(J(T)).$ Now, $y^* \otimes x= \sum_{j=1}^{m} d_j y^*_j \otimes \sum_{i=1}^{n} c_i x_i = \sum_{j=1}^{m} \sum_{i=1}^{n} d_j c_i y^*_j \otimes x_i \in span \{ y^*_j \otimes x_i: 1 \leq j \leq m, 1 \leq i \leq n\}.$  Therefore $ dim ~span~ Ext(J(T))= dim ~ span~ \{ y^*_j \otimes x_i : 1 \leq i \leq n, 1 \leq j \leq m\}= mn.$  This completes the proof.

We next prove the necessary part. First of all we assume that $dim~span J(y)$ is not finite and let $\{y_{\alpha}^*: \alpha \in \Lambda\}$ is a linearly independent set in  $span J(y).$ Then following the similar arguments given in the sufficient part we show that $\{ y_{\alpha}^* \otimes x_i: \alpha \in \Lambda, 1 \leq i \leq n\}$ is a linearly independent set in $span~J(y).$ That contradicts that order of smoothness of $T$ is finite. Now suppose that $dim~span J(y)=m.$ Again following the similar arguments given in the sufficient part we can show that $k=mn.$ 
\end{proof}

Using Theorem \ref{rank1} and \cite[Th. 4.2]{W2}, we get the following corollary.

\begin{cor}
	Let $\mathbb{X},\mathbb{Y}$ be finite-dimensional Banach spaces such that $dim ~\mathbb{X}=n, \, dim ~\mathbb{Y}=m.$ Suppose that $T \in S_{\mathbb{L}(\mathbb{X}, \mathbb{Y})}$ is an operator of rank $1$ such that  $T(M_T)= \{\lambda  y : | \lambda|= 1 \},$ for some $y \in S_{\mathbb{Y}}.$  If $dim~span(M_T)=n$ and $y$ is $m$-smooth, then $T$ is an extreme contraction. \\
\end{cor}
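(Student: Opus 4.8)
The plan is to obtain the corollary by first applying Theorem \ref{rank1} to pin down the exact order of smoothness of $T$, and then invoking \cite[Th. 4.2]{W2} to convert maximal smoothness into extremality. To begin, I would check that the hypotheses of Theorem \ref{rank1} hold automatically in the finite-dimensional setting: $\mathbb{X}$ is reflexive; since every operator between finite-dimensional spaces is compact, $\mathbb{K}(\mathbb{X},\mathbb{Y})=\mathbb{L}(\mathbb{X},\mathbb{Y})$, so $\mathbb{K}(\mathbb{X},\mathbb{Y})$ is trivially an $M$-ideal of $\mathbb{L}(\mathbb{X},\mathbb{Y})$ (here $\mathbb{K}(\mathbb{X},\mathbb{Y})^\perp=\{0\}$ and the $\oplus_1$ decomposition is vacuous); and $dist(T,\mathbb{K}(\mathbb{X},\mathbb{Y}))=0<1$. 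Thus Theorem \ref{rank1} applies to the rank $1$ operator $T$.

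The crucial point is to verify that the integer called $n$ in Theorem \ref{rank1}, namely the cardinality of a maximal linearly independent subset of $M_T\cap Ext(B_{\mathbb{X}})$, coincides with $dim~\mathbb{X}=n$. We are given $dim~span(M_T)=n$, so it suffices to prove $span(M_T\cap Ext(B_{\mathbb{X}}))=span(M_T)$. Writing the rank $1$, norm $1$ operator as $Tx=f(x)y$ with $f\in S_{\mathbb{X}^*}$, one has $M_T=\{x\in S_{\mathbb{X}}:|f(x)|=1\}$, which is the set of unit-modulus scalar multiples of the exposed face $F=\{x\in B_{\mathbb{X}}:f(x)=1\}$. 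In finite dimensions $F$ equals the convex hull of its extreme points, and every extreme point of the face $F$ is an extreme point of $B_{\mathbb{X}}$; hence $span(M_T\cap Ext(B_{\mathbb{X}}))=span(F)=span(M_T)$, an $n$-dimensional space. This forces a maximal linearly independent subset of $M_T\cap Ext(B_{\mathbb{X}})$ to have cardinality exactly $n$.

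With the index identified, Theorem \ref{rank1} yields that $T$ is $k$-smooth with $k=mn$. Since $dim~\mathbb{L}(\mathbb{X},\mathbb{Y})=(dim~\mathbb{X})(dim~\mathbb{Y})=nm$, the operator $T$ is maximally smooth, i.e. $dim~span~J(T)=dim~\mathbb{L}(\mathbb{X},\mathbb{Y})$. I would then invoke \cite[Th. 4.2]{W2} to conclude that $T$ is an extreme point of $B_{\mathbb{L}(\mathbb{X},\mathbb{Y})}$, that is, an extreme contraction. The mechanism behind this last implication is transparent: if $T=(1-t)A+tB$ with $t\in(0,1)$ and $A,B\in B_{\mathbb{L}(\mathbb{X},\mathbb{Y})}$, then each $g\in J(T)$ satisfies $g(A)=g(B)=1$, whence $g(A-B)=0$; as $span~J(T)$ is all of $\mathbb{L}(\mathbb{X},\mathbb{Y})^*$, this gives $A=B$.

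I expect the main obstacle to be the second step: ensuring that intersecting $M_T$ with $Ext(B_{\mathbb{X}})$ does not decrease the span, so that the index $n$ of Theorem \ref{rank1} genuinely equals $dim~\mathbb{X}$. Once this span equality is established, the remainder is a routine dimension count together with the cited extremality criterion.
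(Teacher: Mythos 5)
Your proposal is correct and follows exactly the route the paper intends: the corollary is stated there as an immediate consequence of Theorem \ref{rank1} together with \cite[Th. 4.2]{W2}, with no further argument given. You in fact supply a detail the paper leaves implicit, namely that $span(M_T\cap Ext(B_{\mathbb{X}}))=span(M_T)$ because $M_T$ consists of unimodular multiples of an exposed face which, in finite dimensions, is the convex hull of its extreme points, all of which lie in $Ext(B_{\mathbb{X}})$.
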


Next we show that the newly introduced  notion of index of smoothness of an operator with respect to a set is basis invariant.

\begin{prop}\label{invariance}
	Let $\mathbb{X}, \mathbb{Y}$ be  Banach spaces and let  $T \in S_{\mathbb{L}(\mathbb{X}, \mathbb{Y})}$.  Let $R$ be  a subset of $S_\mathbb{X}$ such that $R \cap Ext(B_{\mathbb{X}}) \neq \emptyset.$      Suppose that  $\mathbb{W}=span \{ f \in J(Tv): v \in R \cap Ext(B_{\mathbb{X}}) \}$ is a finite-dimensional subspace of $\mathbb{Y}^*.$
	Then the index  of smoothness of $T$ with respect to $R$  is independent of the choice of basis of $span~R$ as well as the basis of $\mathbb{W}.$
\end{prop}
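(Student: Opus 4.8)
The plan is to recognize the generating tuples of $Z$ as coordinate vectors of certain rank-one functionals $y^* \otimes x$ in a fixed basis of a tensor-product subspace, and then to identify $\dim Z$ with the dimension of an intrinsically defined span that makes no reference to either basis. Throughout, write $S := \{ y^* \otimes x : x \in R \cap Ext(B_{\mathbb{X}}),\ y^* \in Ext(J(Tx)) \} \subseteq \mathbb{L}(\mathbb{X}, \mathbb{Y})^*$; note that $S$ depends only on $R$ and $T$ (via the intrinsic sets $Ext(B_{\mathbb{X}})$ and $Ext(J(Tx))$), and in particular not on the basis $\{v_1, \ldots, v_n\}$ of $span~R$ nor on the basis $\{y_1^*, \ldots, y_p^*\}$ of $\mathbb{W}$.

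First I would record the algebraic identity $\left( \sum_{j=1}^p \beta_j y_j^* \right) \otimes \left( \sum_{i=1}^n \alpha_i v_i \right) = \sum_{i=1}^n \sum_{j=1}^p \alpha_i \beta_j\, y_j^* \otimes v_i$, so that the tuple $(( \alpha_i \beta_j))_{1 \le i \le n,\, 1 \le j \le p}$ is precisely the coordinate vector of the functional $y^* \otimes x$, with $x = \sum_i \alpha_i v_i$ and $y^* = \sum_j \beta_j y_j^*$, relative to the family $\{ y_j^* \otimes v_i \}$. By Proposition \ref{prop1}, since $\{v_1, \ldots, v_n\}$ and $\{y_1^*, \ldots, y_p^*\}$ are linearly independent, the family $\{ y_j^* \otimes v_i : 1 \le i \le n,\ 1 \le j \le p \}$ is linearly independent, hence a basis of the subspace $\mathbb{U} := span\{ y_j^* \otimes v_i \}$ of $\mathbb{L}(\mathbb{X}, \mathbb{Y})^*$. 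This yields a linear coordinate isomorphism $c : \mathbb{U} \to \mathbb{K}^{np}$ sending $y^* \otimes x$ to $(( \alpha_i \beta_j))$.

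Next I would check that $c$ carries $S$ bijectively onto the generating set $G$ of $Z$. For the forward direction, each $x \in R \cap Ext(B_{\mathbb{X}})$ lies in $span~R$, so $x = \sum_i \alpha_i v_i$ for unique scalars $\alpha_i$; and each $y^* \in Ext(J(Tx))$ lies in $\mathbb{W}$ --- because $J(Tx) \subseteq \mathbb{W}$ by the very definition of $\mathbb{W}$ --- so $y^* = \sum_j \beta_j y_j^*$ for unique scalars $\beta_j$. Since $Tx = T\left(\sum_i \alpha_i v_i\right) = \sum_i \alpha_i T v_i$, the defining conditions of $S$ translate exactly into the conditions $\sum_i \alpha_i v_i \in R \cap Ext(B_{\mathbb{X}})$ and $\sum_j \beta_j y_j^* \in Ext(J(\sum_i \alpha_i T v_i))$ defining $G$; thus $c(y^* \otimes x) = (( \alpha_i \beta_j)) \in G$. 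The reverse direction is the same computation read backwards, and the tensors involved are nonzero (as $x \in S_{\mathbb{X}}$ and $y^* \in S_{\mathbb{Y}^*}$), so $c|_S : S \to G$ is a bijection. Consequently $c$ maps $span~S$ isomorphically onto $span~G = Z$, whence $i_{R}(T) = \dim Z = \dim span~S$.

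Finally, since $span~S$ was defined without any reference to the chosen bases, the quantity $\dim span~S$ is manifestly the same for every basis of $span~R$ and every basis of $\mathbb{W}$; this proves the asserted invariance. The only genuinely delicate point, and the step I would treat most carefully, is the containment $Ext(J(Tx)) \subseteq \mathbb{W}$ needed for the $\beta_j$ to exist --- this is exactly where the hypothesis that $\mathbb{W}$ is spanned by all support functionals at the points $Tv$, $v \in R \cap Ext(B_{\mathbb{X}})$, is used; everything else is bookkeeping with the coordinate isomorphism $c$.
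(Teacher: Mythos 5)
Your proof is correct, but it takes a genuinely different route from the paper's. The paper argues by brute-force coordinate bookkeeping: it fixes a basis of $Z$, writes the change of basis $x_i=\sum_j c_{ij}x_j'$ as an invertible matrix $C$, forms the block-diagonal matrix $P=\mathrm{diag}(C,\dots,C)$ acting on $\mathbb{K}^{nm}$, observes that $P$ carries the generating tuples for one basis to those for the other, and deduces $k'\ge k$ and then $k\ge k'$ by symmetry (repeating the scheme for the basis of $\mathbb{W}$). You instead invoke Proposition \ref{prop1} to see that $\{y_j^*\otimes v_i\}$ is a basis of the tensor subspace $\mathbb{U}=span\{y_j^*\otimes v_i\}$, so that the tuples $((\alpha_i\beta_j))$ are exactly the coordinates of the rank-one functionals $y^*\otimes x$, and then identify $Z$ isomorphically with $span\,S$ for the intrinsically defined set $S=\{y^*\otimes x: x\in R\cap Ext(B_{\mathbb{X}}),\ y^*\in Ext(J(Tx))\}$; invariance under both basis changes then falls out simultaneously. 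You correctly isolate the one point that needs the hypothesis on $\mathbb{W}$ (namely $Ext(J(Tx))\subseteq\mathbb{W}$, so the $\beta_j$ exist), and the inclusion $x\in span\,R$ supplies the $\alpha_i$. Your approach buys conceptual clarity and the extra dividend $i_R(T)=\dim span\,S$, which is essentially the identity $\dim W_1=\dim W_2$ that the paper re-derives later in the proof of Theorem \ref{order}; the paper's approach buys elementarity, needing nothing beyond invertible matrices and in particular not Proposition \ref{prop1}.
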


\begin{proof}
  Suppose that $\{y^*_1, y^*_2, \ldots, y^*_m\}$ is a basis of $\mathbb{W}$ and $\{ x_1, x_2, \ldots, x_n\}$ is a basis of $span~R.$ Suppose that $i_R(T) = k$ with respect to these bases.  Let  $ Z = span \bigg\{((\alpha_i \beta_j ))_{1 \leq i \leq n, 1 \leq j \leq m} \in \mathbb{K}^{mn}:  \sum_{i=1}^n \alpha_i x_i \in R \cap Ext(B_{\mathbb{X}}),  \sum_{j=1}^{m}\beta_j y^*_j \in Ext(J(\sum_{i=1}^{n} \alpha_i Tx_i))	\bigg\}.$
	Assume that $ B=\{ ((\alpha^l_i \beta^l_j ))_{1 \leq i \leq n, 1 \leq j \leq m} : 1 \leq l \leq k\}$ is a basis of $Z.$ Let $\{ x'_1, x'_2, \ldots, x'_n\}$ be  another basis of $span~R$ and let $ x_i = \sum_{j=1}^{n} c_{ij} x'_j,$ for some scalars $c_{ij}.$  Clearly, $C= (c_{ji})_{1 \leq i, j \leq n}$ is an invertible matrix. It is easy to observe that whenever $\sum_{i=1}^{n} \alpha_i x'_i= \sum_{i=1}^{n} \gamma_i x_i,$ then $
	(\alpha_1, \alpha_2, \ldots, \alpha_n)^T = C (\gamma_1, \gamma_2, \ldots, \gamma_n)^T  $ ( here $A^T$ is the transpose of a matrix $A$). 
Consider $$P= \begin{pmatrix}
		C & 0 & 0 & \ldots 0 \\
		0 & C & 0 & \ldots 0 \\
		. & . & .\\
		. & . & .\\
		. & . & .\\
		0 &0 & 0 & \ldots C
	\end{pmatrix}_{nm \times nm}.$$ 
	We observe that for any $ 1\leq l \leq k, $ $( ( \gamma^l_i \beta^l_j))_{1 \leq i \leq n, 1 \leq j \leq m}= P ( ( \alpha^l_i  \beta^l_j))_{1 \leq i \leq n, 1 \leq j \leq m}.$ Since, $B$ is linearly independent and $P$ is invertible, then $\{( ( \gamma^l_i  \beta^l_j))_{1 \leq i \leq n, 1 \leq j \leq p}: 1 \leq l \leq k \}$ is linearly independent.  Let    $i_R(T) = k'$ with respect  to the bases  $\{y^*_1, y^*_2, \ldots, y^*_m\}$  and $\{ x'_1, x'_2, \ldots, x'_n\}.$  Thus we get $k' \geq k $ and proceeding similarly we can show that $k \geq k'.$  Thus $i_R(T) $ is invariant with respect to the basis of $span\,R.$ In the same fashion we can show that $i_R(T) $ is invariant with respect to the basis of $\mathbb{W}.$ 
\end{proof}

We are now in a position to characterize the $k$-smoothness of an operator in terms of the index of smoothness.

	\begin{theorem}\label{order}
	Let $\mathbb{X}$ be a reflexive Banach space  and let  $\mathbb{Y}$ be any Banach space. Suppose that $T \in S_{\mathbb{L}(\mathbb{X}, \mathbb{Y})}$  and  $\mathbb{K}(\mathbb{X}, \mathbb{Y})$ is an $M$-ideal in $\mathbb{L}(\mathbb{X}, \mathbb{Y})$  with  $dist(T, \mathbb{K}(\mathbb{X}, \mathbb{Y}))< 1.$  Suppose that $dim~span {M_T}=n$  and   $span \{ g \in J(Tx), x \in M_T \cap Ext(B_{\mathbb{X}})\}$ is a finite-dimensional subspace of $\mathbb{X}^*.$Then $T$ is $k$-smooth if and only if   $ i_{M_T}(T) = k.$ 
\end{theorem}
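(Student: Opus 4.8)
The plan is to prove the slightly stronger unconditional identity $i_{M_T}(T)=\dim~span~J(T)$ under the stated hypotheses; the claimed equivalence is then immediate, since by definition $T$ is $k$-smooth exactly when $\dim~span~J(T)=k$. First I would check that $i_{M_T}(T)$ is even defined: taking $R=M_T$, Lemma \ref{wojcik} applies (reflexivity, the $M$-ideal condition, and $dist(T,\mathbb{K}(\mathbb{X},\mathbb{Y}))<1$) and guarantees $M_T\cap Ext(B_{\mathbb{X}})\neq\emptyset$, while $\mathbb{W}=span\{g\in J(Tx):x\in M_T\cap Ext(B_{\mathbb{X}})\}$ is finite-dimensional by hypothesis. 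By Proposition \ref{invariance} the value $i_{M_T}(T)$ is independent of the chosen bases, so I may fix a basis $\{x_1,\dots,x_n\}$ of $span~M_T$ (here $n=\dim~span~M_T$) and a basis $\{y_1^*,\dots,y_p^*\}$ of $\mathbb{W}$ (here $p=\dim\mathbb{W}$). Lemma \ref{wojcik} then yields
\[
Ext(J(T))=\{\,y^*\otimes x: x\in M_T\cap Ext(B_{\mathbb{X}}),\ y^*\in Ext(J(Tx))\,\}.
\]

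Next I would set up the linear-algebraic skeleton. By Proposition \ref{prop1} the family $\{y_j^*\otimes x_i:1\le i\le n,\ 1\le j\le p\}$ is linearly independent in $\mathbb{L}(\mathbb{X},\mathbb{Y})^*$; let $V$ be its span and let $\Phi:V\to\mathbb{K}^{pn}$ be the coordinate isomorphism sending $\sum_{i,j}c_{ij}\,y_j^*\otimes x_i$ to the tuple $((c_{ij}))_{i,j}$. Every $y^*\otimes x\in Ext(J(T))$ lies in $V$: writing $x=\sum_i\alpha_i x_i$ (possible since $x\in M_T\subset span~M_T$) and $y^*=\sum_j\beta_j y_j^*$ (possible since $y^*\in Ext(J(Tx))\subset\mathbb{W}$), bilinearity of $\otimes$ gives $y^*\otimes x=\sum_{i,j}\alpha_i\beta_j\,(y_j^*\otimes x_i)$, so that $\Phi(y^*\otimes x)=((\alpha_i\beta_j))_{i,j}$.

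The heart of the argument is to recognize $\Phi(Ext(J(T)))$ as precisely the generating set of $Z$ in Definition \ref{definition:index}. Because $\sum_i\alpha_i Tx_i=T\big(\sum_i\alpha_i x_i\big)=Tx$, the two defining constraints $\sum_i\alpha_i x_i\in M_T\cap Ext(B_{\mathbb{X}})$ and $\sum_j\beta_j y_j^*\in Ext\big(J(\sum_i\alpha_i Tx_i)\big)$ assert exactly that $x\in M_T\cap Ext(B_{\mathbb{X}})$ and $y^*\in Ext(J(Tx))$, i.e. that $y^*\otimes x\in Ext(J(T))$. Hence $\Phi$ identifies $Ext(J(T))$ with the generating set of $Z$, so $Z=\Phi(span~Ext(J(T)))$, and injectivity of $\Phi$ on $V$ gives $i_{M_T}(T)=\dim Z=\dim~span~Ext(J(T))$.

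Finally I would pass from $span~Ext(J(T))$ to $span~J(T)$, which is the step I expect to require the most care. The inclusion $span~Ext(J(T))\subseteq span~J(T)$ is trivial; the reverse is the genuine point. Here I would use that $E:=span~Ext(J(T))$ is finite-dimensional—its dimension equals $\dim Z\le pn$ by the preceding step—hence weak$^*$-closed and convex. Since $J(T)$ is a weak$^*$-compact convex set, the Krein--Milman theorem gives $J(T)=\overline{conv}^{\,w^*}(Ext(J(T)))\subseteq E$, forcing $span~J(T)=E=span~Ext(J(T))$. Combining the two displays yields $i_{M_T}(T)=\dim~span~J(T)$, and both implications of the theorem follow at once. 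Everything except this last finite-dimensionality-plus-Krein--Milman passage is the routine bookkeeping of the coordinate isomorphism $\Phi$.
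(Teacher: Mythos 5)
Your proof is correct and takes essentially the same approach as the paper's: both identify $\operatorname{span} Ext(J(T))$ with the space $Z$ via the coordinate map determined by the linearly independent family $\{y_j^*\otimes x_i\}$ from Proposition \ref{prop1}, the paper simply carrying this out by checking preservation of linear independence in both directions rather than naming the isomorphism $\Phi$. Your explicit Krein--Milman argument for $\operatorname{span} J(T)=\operatorname{span} Ext(J(T))$ makes precise a step the paper leaves implicit (it silently replaces $\dim \operatorname{span} J(T)$ by $\dim \operatorname{span} Ext(J(T))$), which is a welcome addition rather than a deviation.
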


\begin{proof}
	Let $   \{ x_1, x_2, \ldots, x_n\}$ be a maximal linearly independent set of $M_T.$
	 Suppose that $\{ y^*_1, y^*_2, \ldots, y^*_m\}$ is a basis of $span \{ y^* \in J(Tx), x\in M_T \cap Ext(B_{\mathbb{X}})\}.$
	 Using Lemma \ref{wojcik} we get, 
	 	\begin{eqnarray*}
	 		& & Ext(J(T))   \\
	 & = & \bigg\{ y^* \otimes x: x \in M_T \cap Ext(B_{\mathbb{X}}), y^* \in Ext( J(Tx))  \bigg\}\\
	& = &    \bigg\{ \sum_{j=1}^{m} \beta_j y^*_j \otimes  \sum_{i=1}^{n} \alpha_i x_i:  \sum_{i=1}^{n}\alpha_i x_i \in M_T \cap Ext(B_{\mathbb{X}}), \sum_{j=1}^{m} \beta_j y^*_j  \in Ext\Big( J(\sum_{i=1}^{n}\alpha_i Tx_i)\Big)  \bigg\}\\
		& = &    \bigg\{ \sum_{j=1}^{m} \sum_{i=1}^{n} \alpha_i \beta_j y^*_j \otimes x_i:  \sum_{i=1}^{n}\alpha_i x_i \in M_T \cap Ext(B_{\mathbb{X}}), \sum_{j=1}^{m} \beta_j y^*_j  \in Ext\Big( J(\sum_{i=1}^{n}\alpha_i Tx_i)\Big)  \bigg\}
	\end{eqnarray*}
	Suppose that $
W_1= span ~Ext(J(T)) $
and 
$
W_2 = span \bigg\{(( \alpha_i \beta_j)) _{1 \leq i \leq n, 1 \leq j \leq m} \in \mathbb{K}^{mn}:  \sum_{i=1}^n \alpha_i x_i \in M_T \cap Ext(B_{\mathbb{X}}), \sum_{j=1}^{m}\beta_j y^*_j \in Ext(J(\sum_{i=1}^{n} \alpha_i Tx_i))
\bigg\}.
$
To prove the theorem we only need to show that $dim ~W_1= dim ~W_2.$ Suppose that $dim ~ W_1=k$ and let $S= \{ \sum_{j=1}^{m} \sum_{i=1}^{n}  \alpha_{1i} \beta_{1j} y^*_j \otimes x_i,  \sum_{j=1}^{m} \sum_{i=1}^{n}  \alpha_{2i} \beta_{2j} y^*_j \otimes x_i, \ldots, $ $ \sum_{j=1}^{m} \sum_{i=1}^{n}  \alpha_{ki} \beta_{kj} y^*_j \otimes x_i\}$ be a linearly independent set in $W_1.$
 Then clearly, $ \sum_{i=1}^{n}\alpha_{ti} x_i \in M_T \cap Ext(B_{\mathbb{X}})$ and $\sum_{j=1}^{m} \beta_{tj} y^*_j \in Ext( J( \sum_{i=1}^{n}\alpha_{ti} Tx_i)),$ for each  $t, \, 1 \leq t \leq k.$ 
Suppose that for some $c_1, c_2, \ldots, c_k \in \mathbb{K},$ $\sum_{t=1}^{k} c_t ((  \alpha_{ti} \beta_{tj} ))_{1 \leq i \leq n, 1 \leq j \leq m}= 0 \implies \sum_{t=1}^{k} c_t  \alpha_{ti} \beta_{tj}=0,$ for each $ i, \,1 \leq i \leq n $ and $ j, \,1 \leq j\leq m.$ Then for any $A \in \mathbb{L}(\mathbb{X}, \mathbb{Y}),$
 \[
\sum_{t=1}^{k} c_t \bigg( \sum_{j=1}^{m} \sum_{i=1}^{n} \alpha_{ti} \beta_{tj} y^*_j \otimes x_i \bigg) A= \sum_{j=1}^{m} \sum_{i=1}^{n} y^*_j \bigg(A \bigg(\sum_{t=1}^{k}  c_t  \alpha_{ti}\beta_{tj} x_i \bigg) \bigg)=0.
 \]
Since $S$ is a linearly independent  set in $W_1,$ we get  $c_t=0,$ for each  $t, \, 1 \leq t \leq k.$ Therefore, $\{ ((  \alpha_{ti} \beta_{tj} ))_{1 \leq i \leq n, 1 \leq j \leq m}: 1 \leq t \leq k\}$ is a linearly independent set in $W_2.$ So, $dim ~W_1=k \leq dim ~ W_2.$ Now assume that $dim ~W_2= r$ and $ \{ ((  \alpha_{ti} \beta_{tj} ))_{1 \leq i \leq n, 1 \leq j \leq m}: 1 \leq t \leq r\}$ is a basis of $W_2.$ Then it is immediate that 
\[
dim~span \{ (\overline{\alpha_{1i} \beta_{1j}}, \overline{\alpha_{2i} \beta_{2j}}, \ldots, \overline{\alpha_{ri} \beta_{rj}} ): 1 \leq i \leq n, 1 \leq j \leq m\}= r.
\]
Suppose that for some $d_1, d_2, \ldots, d_r \in \mathbb{K},$ $ \sum_{t=1}^{r} d_t (\sum_{j=1}^{m} \sum_{i=1}^{n}  \alpha_{ti} \beta_{tj} y^*_j \otimes x_i) = 0 \implies \sum_{j=1}^{m} \sum_{i=1}^{n} (\sum_{t=1}^{r} d_t  \alpha_{ti} \beta_{tj} ) y^*_j \otimes x_i =0.$ Using  Proposition \ref{prop1}, for any $ 1 \leq i \leq n, 1 \leq j \leq m,$ $\sum_{t=1}^{r} d_t  \alpha_{ti} \beta_{tj}=0 \implies \langle (d_1, d_2, \ldots, d_r), (\overline{\alpha_{1i} \beta_{1j}}, \overline{\alpha_{2i} \beta_{2j}}, \ldots, \overline{\alpha_{ri} \beta_{rj}}) \rangle = 0 . $ Since $dim~span \{ (\overline{\alpha_{1i} \beta_{1j}}, \overline{\alpha_{2i} \beta_{2j}}, \ldots, \overline{\alpha_{ri} \beta_{rj}} ): 1 \leq i \leq n, 1 \leq j \leq m\}= r,$ it is easy to verify that $(d_1, d_2, \ldots, d_r)= 0  \implies d_i=0,$ for any $1 \leq i \leq r.$ This implies that $\{ \sum_{j=1}^{m} \sum_{i=1}^{n}  \alpha_{ti} \beta_{tj} y^*_j \otimes x_i: 1 \leq t \leq r\}$ is a linearly independent set in $W_1.$ Thus $dim ~W_2=r \leq dim ~W_1.$ Therefore, $dim~W_1= dim~W_2,$ as required.
   
\end{proof}

The study of extreme contractions between Banach spaces is an active area of research. For some of the recent results in this direction, the readers are referred to \cite{MPD, RRBS, S1, SSP}. Using \cite[Th. 2.2]{MPD}, the next result follows immediately as a corollary to the above theorem, which characterizes the extreme contractions between polyhedral Banach spaces, in terms of the index of smoothness. 

\begin{cor}
	Let $\mathbb{X}, \mathbb{Y}$ be  finite-dimensional polyhedral Banach spaces and let $dim ~\mathbb{X}=m, dim ~\mathbb{Y}=n.$ Let  $T \in S_{\mathbb{L}(\mathbb{X}, \mathbb{Y})}.$ Then $T$ is an extreme contraction  if and only if $i_{M_T}(T) = mn.$
\end{cor}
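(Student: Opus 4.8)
The plan is to deduce the corollary by combining Theorem \ref{order} with the known characterization of extreme contractions between polyhedral spaces recorded in \cite[Th. 2.2]{MPD}. The latter should say that, for finite-dimensional polyhedral $\mathbb{X}, \mathbb{Y}$, an operator $T \in S_{\mathbb{L}(\mathbb{X}, \mathbb{Y})}$ is an extreme contraction if and only if $T$ is $mn$-smooth; note that $mn = dim~\mathbb{L}(\mathbb{X}, \mathbb{Y})$, so this is really the general polyhedral fact that in a finite-dimensional polyhedral space a unit vector is a vertex of the ball exactly when its supporting functionals span the whole dual. Granting this, the corollary follows once Theorem \ref{order} is shown to apply and the index is read off at $k = mn$.

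First I would verify that every hypothesis of Theorem \ref{order} is automatically satisfied in the present setting. Since $\mathbb{X}$ is finite-dimensional it is reflexive. In finite dimensions $\mathbb{K}(\mathbb{X}, \mathbb{Y}) = \mathbb{L}(\mathbb{X}, \mathbb{Y})$, and any Banach space is trivially an $M$-ideal of itself, the relevant annihilator $\mathbb{K}(\mathbb{X}, \mathbb{Y})^\perp$ being $\{0\}$; hence $\mathbb{K}(\mathbb{X}, \mathbb{Y})$ is an $M$-ideal in $\mathbb{L}(\mathbb{X}, \mathbb{Y})$ and $dist(T, \mathbb{K}(\mathbb{X}, \mathbb{Y})) = 0 < 1$. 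Finally, because $\mathbb{X}$ and $\mathbb{Y}$ are finite-dimensional, $span~M_T$ is finite-dimensional and $span\{g \in J(Tx): x \in M_T \cap Ext(B_{\mathbb{X}})\}$ is a finite-dimensional subspace of $\mathbb{Y}^*$. Thus all the assumptions of Theorem \ref{order} hold, and in particular $i_{M_T}(T)$ is well-defined and basis-invariant by Proposition \ref{invariance}.

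With applicability secured, Theorem \ref{order} gives, for every nonnegative integer $k$, that $T$ is $k$-smooth if and only if $i_{M_T}(T) = k$; specializing to $k = mn$ yields that $T$ is $mn$-smooth if and only if $i_{M_T}(T) = mn$. Chaining this with \cite[Th. 2.2]{MPD}, namely that $T$ is an extreme contraction if and only if $T$ is $mn$-smooth, produces the desired equivalence: $T$ is an extreme contraction if and only if $i_{M_T}(T) = mn$. The proof is an assembly of already established facts, so no substantial obstacle arises; the only point requiring care is the bookkeeping of the integers, since the quantity $mn$ in the statement is $dim~\mathbb{L}(\mathbb{X}, \mathbb{Y})$ and plays the role of $k$ in Theorem \ref{order}, and must not be confused with the internal parameter $dim~span~M_T$. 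One should also confirm that \cite[Th. 2.2]{MPD} indeed equates extreme contractions with $mn$-smoothness under exactly the polyhedrality hypotheses assumed here.
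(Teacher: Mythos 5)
Your proposal is correct and follows essentially the same route as the paper: the authors also obtain this corollary by combining Theorem \ref{order} (whose hypotheses hold automatically in finite dimensions, since $\mathbb{K}(\mathbb{X},\mathbb{Y})=\mathbb{L}(\mathbb{X},\mathbb{Y})$ is trivially an $M$-ideal and the distance is $0$) with the characterization in \cite[Th. 2.2]{MPD} of extreme contractions between polyhedral spaces as the $mn$-smooth points. Your explicit verification of the hypotheses is a helpful elaboration of what the paper leaves implicit in the phrase ``follows immediately.''
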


Before we present the connection between $k$-smoothness of an operator with coproximinality, let us first characterize the coproximinal subspaces in smooth Banach space.	

	\begin{lemma}\label{lemma:coproximinal}
		Let $\mathbb{X}$ be a smooth Banach space and let $\mathbb{Y}$ be an $n$-dimensional subspace of $\mathbb{X}.$ Then $\mathbb{Y}$ is coproximinal if and only if $dim~span \{ y^* \in S_{\mathbb{X}^*}: y^* \in J(y), y \in \mathbb{Y}\}=n.$ 
	\end{lemma}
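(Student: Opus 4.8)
The plan is to translate coproximinality into an algebraic direct-sum decomposition of $\mathbb{X}$ and then finish by a dimension count. First I would invoke the remark recorded just after the definition of coproximinality: $y_0 \in \mathbb{Y}$ is a best coapproximation to $x$ out of $\mathbb{Y}$ if and only if $\mathbb{Y} \perp_B (x-y_0)$. Unwinding $\mathbb{Y} \perp_B z$ (i.e. $y \perp_B z$ for every $y \in \mathbb{Y}$) together with the smoothness of $\mathbb{X}$ and James's characterization (Lemma \ref{J}), I would show that for $z \in \mathbb{X}$ one has $\mathbb{Y} \perp_B z$ if and only if the unique support functional $f_y$ at each nonzero $y \in \mathbb{Y}$ satisfies $f_y(z)=0$. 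Smoothness is essential here: it upgrades the mere existence of a norming functional vanishing on $z$ to the vanishing of the one and only support functional. Writing $W = span\{y^* \in S_{\mathbb{X}^*}: y^* \in J(y),\, y \in \mathbb{Y}\}$ for the subspace in the statement, this identifies $\{z : \mathbb{Y} \perp_B z\}$ with the pre-annihilator $N := \{z \in \mathbb{X}: f(z)=0 \text{ for all } f \in W\}$. Consequently $\mathbb{Y}$ is coproximinal if and only if every $x \in \mathbb{X}$ splits as $x = y_0 + z$ with $y_0 \in \mathbb{Y}$ and $z \in N$, that is, if and only if $\mathbb{X} = \mathbb{Y} + N$.

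Two structural facts feed the count. First, $\mathbb{Y} \cap N = \{0\}$: if $z \in \mathbb{Y} \cap N$, then taking $y=z$ gives $\|z\| = f_z(z) = 0$. Second, $\dim W \geq n$ always: the restriction map $W \to \mathbb{Y}^*$, $f \mapsto f|_{\mathbb{Y}}$, is surjective, since the functionals $f_y|_{\mathbb{Y}}$ span $\mathbb{Y}^*$ (otherwise some $0 \neq v \in \mathbb{Y}$ would be annihilated by all of them, contradicting $f_v(v) = \|v\| \neq 0$), whence $\dim W \geq \dim \mathbb{Y}^* = n$.

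With these in hand I would settle both implications in the quotient $\mathbb{X}/N$. For the reverse direction, assuming $\dim W = n$, the standard fact that a finite-dimensional subspace $W \subseteq \mathbb{X}^*$ has pre-annihilator of codimension $\dim W$ gives $\operatorname{codim} N = n$; since $\mathbb{Y} \cap N = \{0\}$ and $\dim \mathbb{Y} = n$, the image of $\mathbb{Y}$ in $\mathbb{X}/N$ is $n$-dimensional, hence equals $\mathbb{X}/N$, so $\mathbb{X} = \mathbb{Y} + N$ and $\mathbb{Y}$ is coproximinal. For the forward direction, assuming coproximinality, $\mathbb{X} = \mathbb{Y} + N$ with $\mathbb{Y} \cap N = \{0\}$ yields $\mathbb{X}/N \cong \mathbb{Y}$, so $\dim(\mathbb{X}/N) = n$; as every $f \in W$ annihilates $N$ it factors through $\mathbb{X}/N$, and the induced map $W \to (\mathbb{X}/N)^*$ is injective, so $\dim W \leq \dim(\mathbb{X}/N)^* = n$, which with $\dim W \geq n$ forces $\dim W = n$.

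The main obstacle I anticipate is the bookkeeping in the forward direction when $W$ is not assumed finite-dimensional a priori: the injection $W \hookrightarrow (\mathbb{X}/N)^*$ is exactly what rules out an infinite-dimensional $W$, and it must be run before invoking the finite-dimensional annihilator-codimension formula. Everything else is routine linear algebra once smoothness has been used to pin down $\{z : \mathbb{Y} \perp_B z\}$ as the pre-annihilator $N$.
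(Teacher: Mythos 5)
Your proposal is correct and follows essentially the same route as the paper: both use smoothness plus James's characterization to identify $\{z:\mathbb{Y}\perp_B z\}$ with the common kernel of the support functionals of $\mathbb{Y}$, reduce coproximinality to the splitting $\mathbb{X}=\mathbb{Y}\oplus N$, and finish by the codimension--dimension count. Your extra bookkeeping (the a priori bound $\dim W\geq n$ and the explicit injection $W\hookrightarrow(\mathbb{X}/N)^*$) just makes precise the duality step the paper states without elaboration.
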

	
	\begin{proof}
		
		Let $S = \{ y^* \in S_{X^*}: y^* \in J(y), y \in \mathbb{Y}\}$ and let $W= \cap_{y^*\in S} ker ~y^*.$  Let us first observe that $\mathbb{Y} \cap W= \{0\}.$ If possible let, $ y (\neq 0) \in \mathbb{Y} \cap W$ and let $J(y)=\{y^*\},$ as $\mathbb{X}$ is smooth. Then $y^* \in S$ and since $y \in W,$ we obtain that $y^*(y)=0,$ a contradiction to the fact that $y^* \in J(y).$
		
		We first prove the necessary part.  Since $\mathbb{Y}$ is coproximinal, for any $x \in \mathbb{X}$ there exists $y_0 \in \mathbb{Y}$ such that $y_0$ is the best coapproximation to $x$ out of $\mathbb{Y},$  and so  $\mathbb{Y} \perp_B x-y_0.$ 
		Then for each $y \in \mathbb{Y}, \, y \perp_B x-y_0 .$ Since $\mathbb{X}$ is smooth,   for each $y^* \in S,$ there exists $y \in S_{\mathbb{Y}}$ such that $J(y)= \{y^*\}$ and therefore, using Lemma  \ref{J} we obtain $ y^*(x-y_0)=0 .$   Then  $x-y_0 \in W,$  and  $ x = (x -y_0) +y_0, $ where $x-y_0 \in W, y_0 \in \mathbb{Y}$ . Thus $\mathbb{X}= \mathbb{Y}+ W.$  As $\mathbb{Y} \cap W= \{0\},$ we have $\mathbb{X}= \mathbb{Y} \oplus W.$ This implies that $W$ is a subspace of codimension $n$ and consequently, $dim~span ~S=n.$
		
		\smallskip
			Now we prove the sufficient part.
		Assume that $dim~span~ S=n.$
	 Then $W$ is  a  subspace of $\mathbb{X}$ of codimension $n.$ 
		Since $dim~\mathbb{Y}=n,~dim~\sfrac{\mathbb{X}}{W}=n$ and $\mathbb{Y} \cap W= \{0\},$ therefore, $\mathbb{X}= \mathbb{Y} \oplus W.$ Therefore, for any $x \in \mathbb{X},$ there exists $y_0 \in \mathbb{Y}$ such that $x - y_0 \in W.$ Let $y \in \mathbb{Y}$ and suppose that $J(y)= \{y^*\},$ clearly, $y^* \in S$ and  $y^*(x-y_0)=0.$ Therefore, from Lemma \ref{J}, $y \perp_B x-y_0.$  As $y$ is chosen arbitrarily,  $ \mathbb{Y} \perp_B x -y_0,$ which implies that $y_0$ is a best coapproximation to $x$ out of $\mathbb{Y},$ and consequently, $\mathbb{Y}$ is coproximinal subspace of $\mathbb{X}.$
	\end{proof}

Using Lemma \ref{lemma:coproximinal} and Theorem \ref{order}, we get the following corollary.

	\begin{cor}
	Let $\mathbb{X}$ be a reflexive smooth Banach space  and let  $\mathbb{Y}$ be a Banach space. Suppose that $T \in S_{\mathbb{L}(\mathbb{X}, \mathbb{Y})}$  and  $\mathbb{K}(\mathbb{X}, \mathbb{Y})$ is an $M$-ideal in  $\mathbb{L}(\mathbb{X}, \mathbb{Y})$  with  $dist(T, \mathbb{K}(\mathbb{X}, \mathbb{Y}))< 1.$ Also assume that the cardinality of a maximal linearly independent subset of $M_T$ is $n$ and   $span \{T(M_T)\}$ is a coproximinal subspace of $\mathbb{Y}.$ Then $T$ is $k$-smooth if and only if $i_{M_T}(T) = k.$
\end{cor}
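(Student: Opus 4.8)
The plan is to obtain this as a direct consequence of Theorem \ref{order}. Comparing the two statements, every hypothesis of Theorem \ref{order} is already available here except one: Theorem \ref{order} requires that $span\{g \in J(Tx) : x \in M_T \cap Ext(B_{\mathbb{X}})\}$ be a finite-dimensional subspace of $\mathbb{Y}^*$, whereas here we are instead handed the coproximinality of $span\{T(M_T)\}$. Note that the remaining data match verbatim: reflexivity of $\mathbb{X}$, the $M$-ideal condition on $\mathbb{K}(\mathbb{X},\mathbb{Y})$, the bound $dist(T,\mathbb{K}(\mathbb{X},\mathbb{Y}))<1$, the assumption $T \in S_{\mathbb{L}(\mathbb{X},\mathbb{Y})}$, and the fact that a maximal linearly independent subset of $M_T$ has cardinality $n$, i.e.\ $dim~span~M_T = n$. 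Thus the whole task reduces to verifying the finite-dimensionality condition; once this is done, Theorem \ref{order} immediately yields that $T$ is $k$-smooth if and only if $i_{M_T}(T)=k$.

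To carry out this verification I would first set $\mathbb{V} = span\{T(M_T)\}$. Since $T$ is linear we have $\mathbb{V} = T(span~M_T)$, so $\mathbb{V}$ is a finite-dimensional subspace of $\mathbb{Y}$ with $d := dim~\mathbb{V} \leq n$, and $\mathbb{V} \neq \{0\}$ because $\|Tx\| = \|T\| = 1$ for every $x \in M_T$. By hypothesis $\mathbb{V}$ is coproximinal, and the smoothness assumption is exactly what makes Lemma \ref{lemma:coproximinal} applicable to the finite-dimensional subspace $\mathbb{V}$. The necessary part of that lemma then yields $dim~span\{y^* \in S_{\mathbb{Y}^*} : y^* \in J(y), y \in \mathbb{V}\} = d < \infty$.

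The decisive step is then the inclusion of the support-functional sets. For each $x \in M_T \cap Ext(B_{\mathbb{X}})$ the image $Tx$ is a unit vector lying in $\mathbb{V}$, so every $g \in J(Tx)$ belongs to $\{y^* \in S_{\mathbb{Y}^*} : y^* \in J(y), y \in \mathbb{V}\}$. Passing to spans gives $span\{g \in J(Tx) : x \in M_T \cap Ext(B_{\mathbb{X}})\} \subseteq span\{y^* \in S_{\mathbb{Y}^*} : y^* \in J(y), y \in \mathbb{V}\}$, and by the previous step the right-hand side has dimension $d$. Hence the left-hand side is finite-dimensional, which is precisely the missing hypothesis of Theorem \ref{order}.

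I expect the main obstacle to be this matching of support-functional sets and, relatedly, making sure Lemma \ref{lemma:coproximinal} is invoked on the correct space. The key observation is that only finiteness, i.e.\ an upper bound on the dimension, is needed to feed into Theorem \ref{order}, so a containment rather than an equality of the two functional sets suffices; in particular one need not worry that $M_T \cap Ext(B_{\mathbb{X}})$ may be strictly smaller than $M_T$, nor that different choices of $x$ may produce overlapping sets $J(Tx)$. With the finite-dimensionality secured, the conclusion follows at once from Theorem \ref{order}.
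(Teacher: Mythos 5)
Your proposal is exactly the paper's (unwritten) argument: the paper disposes of this corollary with the single line ``Using Lemma \ref{lemma:coproximinal} and Theorem \ref{order}, we get the following corollary,'' and your verification that coproximinality of $span\{T(M_T)\}$ supplies the missing finite-dimensionality hypothesis of Theorem \ref{order}, via the inclusion $\{g \in J(Tx): x \in M_T \cap Ext(B_{\mathbb{X}})\} \subseteq \{y^* \in S_{\mathbb{Y}^*}: y^* \in J(y),\ y \in span\{T(M_T)\}\}$, is precisely how those two results combine. The one caveat --- inherited from the paper's own statement rather than introduced by you --- is that Lemma \ref{lemma:coproximinal} requires the \emph{ambient} space of the coproximinal subspace to be smooth, which here is the codomain $\mathbb{Y}$, whereas the corollary (and your write-up, which invokes ``the smoothness assumption'' without naming the space) only hypothesizes smoothness of $\mathbb{X}$; the hypothesis should be read as smoothness of $\mathbb{Y}$ for the argument to close.
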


A complete characterization of $k$-smoothness for Hilbert space operators was given in  \cite[Th. 2.1]{MDP20}. Using the notions developed in this article, we extend the result substantially  for operators between Banach spaces.

	\begin{theorem}\label{smooth}
		Let $\mathbb{X}, \mathbb{Y}$ be strictly convex, smooth, reflexive Banach spaces.  Suppose  $T \in S_{\mathbb{L}(\mathbb{X}, \mathbb{Y})}$   and $\mathbb{K}(\mathbb{X}, \mathbb{Y})$ is an $M$-ideal in $\mathbb{L}(\mathbb{X}, \mathbb{Y})$with  $dist(T, \mathbb{K}(\mathbb{X}, \mathbb{Y})) < 1.$ Let $M_{T}$ be the unit sphere of an $n$-dimensional subspace  $\mathbb{X}_0$ of $\mathbb{X}.$ Suppose that  $T(\mathbb{X}_0)$ is a coproximinal subspace of $\mathbb{Y}$ and  $T(\mathbb{X}_0)$ has a strong Auerbach basis. Then
		\begin{itemize}
				\item[(i)] $T$ is at least $\binom{n+1}{2}$-smooth.
			\item[(ii)]  $T$ is $n^2$-smooth, when $\mathbb{X}, \mathbb{Y}$ are over complex field.
		\end{itemize}
		\end{theorem}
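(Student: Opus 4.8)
The plan is to reduce the order of smoothness of $T$ to a linear-algebra question about rank-one matrices, and then to exploit the strong Auerbach structure. First I would record the consequences of the hypotheses: since $\mathbb{X}$ is strictly convex, $M_T \cap Ext(B_{\mathbb{X}}) = M_T$, and since $\mathbb{Y}$ is smooth, $J(Tx)$ is a singleton for every $x$; hence Lemma \ref{wojcik} gives $Ext(J(T)) = \{ y^*_{Tx} \otimes x : x \in M_T \}$, where $\{y^*_{Tx}\} = J(Tx)$. Because $M_T = S_{\mathbb{X}_0}$, the restriction $T|_{\mathbb{X}_0}$ is an isometry onto $T(\mathbb{X}_0)$, so $\dim T(\mathbb{X}_0) = n$. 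Let $\{w_1, \dots, w_n\}$ be the strong Auerbach basis of $T(\mathbb{X}_0)$, set $x_i = (T|_{\mathbb{X}_0})^{-1} w_i$ (a basis of $\mathbb{X}_0$ lying in $M_T$), and let $y_i^*$ be the support functional of $w_i$, so that $y_i^*(w_j) = \delta_{ij}$. Coproximinality of $T(\mathbb{X}_0)$ together with Lemma \ref{lemma:coproximinal} shows that the support functionals of points of $T(\mathbb{X}_0)$ span an $n$-dimensional space, necessarily $span\{y_1^*, \dots, y_n^*\}$; thus for $x = \sum_i \alpha_i x_i \in M_T$ we may write $y^*_{Tx} = \sum_j \beta_j y_j^*$, with the normalisation $\sum_i \alpha_i \beta_i = 1$.

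With these bases fixed, Theorem \ref{order} identifies the order of smoothness $k$ of $T$ with $i_{M_T}(T) = \dim span\{ (\alpha_i \beta_j)_{i,j} : x \in M_T \}$, a subspace of the $n^2$-dimensional matrix space. The decisive structural input is that the strong Auerbach basis localises the support functionals: if $x$ lies in the coordinate subspace $span\{x_i : i \in I\}$, then $Tx \in span\{w_i : i \in I\}$ is Birkhoff--James orthogonal to $span\{w_j : j \notin I\}$, so by Lemma \ref{J} and the uniqueness of the norming functional (smoothness of $\mathbb{Y}$) one gets $\beta_j = 0$ for all $j \notin I$. Consequently the rank-one matrix $M(x) = (\alpha_i \beta_j)$ is supported on the block $I \times I$.

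For part (i) I would exhibit $\binom{n+1}{2}$ linearly independent such matrices. Taking $x = x_i$ gives $M(x) = E_{ii}$ (here $\alpha = \beta = e_i$), producing the $n$ diagonal matrix units. For each pair $s < t$, taking $x = (x_s + x_t)/\|x_s+x_t\| \in M_T$ yields a matrix supported on the $\{s,t\}$-block whose off-diagonal entries $(\alpha_s\beta_t, \alpha_t\beta_s)$ are not both zero, since $(\beta_s,\beta_t) \neq 0$. Reading off the $(s,t)$ and $(t,s)$ entries shows that these $\binom{n}{2}$ matrices, together with the diagonal ones, are linearly independent, their off-diagonal supports being pairwise disjoint; this gives $n + \binom{n}{2} = \binom{n+1}{2}$ and hence $k \geq \binom{n+1}{2}$.

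For part (ii) the upper bound $k \leq n^2$ is automatic, so the task is to show that over $\mathbb{C}$ the matrices $M(x)$ span the whole block space, i.e. that in each $\{s,t\}$-block the off-diagonal vectors $v(x) = (\alpha_s\beta_t, \alpha_t\beta_s)$ span $\mathbb{C}^2$ rather than a line. The natural attempt is to combine $x = (x_s+x_t)/\|x_s+x_t\|$ with $x = (x_s + \sqrt{-1}\,x_t)/\|x_s+\sqrt{-1}\,x_t\|$ and to check that the two resulting off-diagonal vectors are $\mathbb{C}$-independent; this is the main obstacle, since the dependence of $\beta$ on $\alpha$ is not explicit. I expect to resolve it conceptually rather than by computation: suppose all $v(x)$ lay on a complex line. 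The degenerate lines are excluded by continuity of the duality map at $w_s$ and $w_t$, and one is then forced into a relation $\alpha_s\beta_t = \rho\,\alpha_t\beta_s$, which says that the projectivised duality map $[\alpha_s:\alpha_t] \mapsto [\beta_s:\beta_t]$ of the two-dimensional block is the holomorphic automorphism $[\alpha_s:\alpha_t]\mapsto[\alpha_s:\rho\alpha_t]$ of $\mathbb{CP}^1$. But the duality map of a smooth strictly convex complex norm is conjugate-homogeneous, and a homotopy through smooth strictly convex norms to the Hilbert norm shows that its projectivisation has topological degree $-1$ (as for complex conjugation), whereas a holomorphic automorphism has degree $+1$ --- a contradiction. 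Therefore each off-diagonal block is fully spanned; together with the diagonal units this yields all $n^2$ matrix units, so $k = n^2$.
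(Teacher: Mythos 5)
Your setup and part (i) follow essentially the same route as the paper: Lemma \ref{wojcik} plus strict convexity and smoothness reduce everything to $Ext(J(T))=\{y^*_{Tx}\otimes x: x\in M_T\}$, Lemma \ref{lemma:coproximinal} gives the $n$-dimensional space of support functionals with the dual basis $\{y_i^*\}$, the strong Auerbach property localises $y^*_{Tx}$ to the block of $x$, and the diagonal units together with one nonzero off-diagonal vector per pair $\{s,t\}$ give $n+\binom{n}{2}=\binom{n+1}{2}$. Your localisation argument (apply Lemma \ref{J} to $Tx\perp_B w_j$ and invoke uniqueness of the norming functional) is in fact cleaner than the paper's, which derives the block support of both $M_g$ and $J(y)$ by contradiction with strict convexity and smoothness; the paper additionally proves that \emph{all} the $\beta$'s on the support are nonzero, but as you note the normalisation $\sum_i\alpha_i\beta_i=1$ already forces one nonzero off-diagonal entry, which is all part (i) needs.

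Part (ii) is where you genuinely diverge, and it is also where your argument is not yet a proof. You correctly identify the obstacle --- showing that in each $\{s,t\}$-block the vectors $(\alpha_s\beta_t,\alpha_t\beta_s)$ are not confined to one complex line --- but the degree-theoretic resolution is only sketched: you would still have to justify (a) continuity of the restricted duality map $[\alpha_s:\alpha_t]\mapsto[\beta_s:\beta_t]$ (available here because smoothness gives norm-to-weak$^*$ continuity and the target lies in the fixed two-dimensional space $span\{y_s^*,y_t^*\}$), and (b) that its degree is $-1$, for which the proposed homotopy of norms to the Hilbert norm requires constructing a path of smooth strictly convex norms along which the duality maps vary continuously --- none of this is carried out. (A shortcut that avoids the homotopy: the map $[\alpha]\mapsto[\ker\beta]$ is fixed-point free on $\mathbb{CP}^1\cong S^2$ because $\beta(\alpha)=1\neq 0$, hence has degree $-1$, and it differs from $[\alpha]\mapsto[\beta]$ by a holomorphic automorphism; a M\"obius map has degree $+1$, contradiction.) The paper instead disposes of this in two lines of algebra: assuming $\alpha_r\beta_s=k\,\alpha_s\beta_r$ with $k\neq 0$, it picks a second point with $\alpha'_r:\alpha'_s=\sqrt{-k}:1$ and checks that proportionality of the two off-diagonal vectors would force $\sqrt{-k}\,\beta'_r+\beta'_s=0$, contradicting $\alpha'_r\beta'_r+\alpha'_s\beta'_s=1$. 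Your topological viewpoint is illuminating (it explains \emph{why} complex scalars fill out the whole block), but as written it leaves the decisive step unverified, whereas the paper's explicit choice settles it by direct computation.
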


	\begin{proof} Observe that $T(\mathbb{X}_0)$ is an $n$-dimensional subspace of $\mathbb{Y}.$
		Suppose that $\mathcal{B}= \{ y_1, y_2, \ldots, y_n\}$ is a strong Auerbach basis of $T(\mathbb{X}_0).$  Let $T(x_i) = y_i,$ for each $i, \,1 \leq i \leq n.$  Clearly, $\{x_1, x_2, \ldots, x_n\}$ is linearly independent.
	Let $W= span \{ y^* \in S_{\mathbb{Y}^*}: y^* \in J(y), y \in T(\mathbb{X}_0)\}.$  Since $T(\mathbb{X}_0)$ is coproximinal subspace in $\mathbb{Y},$ using  Lemma \ref{lemma:coproximinal},  we infer that $W$ is an $n$-dimensional subspace of $\mathbb{Y}^*.$ Since $\mathcal{B}$ is a strong Auerbach basis of $T(\mathbb{X}_0),$ there exists $y^*_i \in S_{\mathbb{Y}^*}$ such that $y^*_i(y_i)=1$ and $y^*_i(y_j)=0,$ for each $1 \leq i \neq j \leq n.$ Clearly, $ \{ y^*_1, y^*_2, \ldots, y^*_n\}$ is a basis of $W.$ We show that $ \{ y^*_1, y^*_2, \ldots, y^*_n\}$ is a strong Auerbach basis of $W.$

	Consider  $i_1, i_2, \ldots, i_r \in \{ 1, 2, \ldots, n\}.$    First we claim that the norm attainment set of  $g=\sum_{t=1}^{  r} \delta_{i_t} y^*_{i_t} \in S_{\mathbb{Y}^*},$ for some $\delta_{i_t} \in \mathbb{K} \setminus \{0\},$ is of the form $\{\sum_{t=1}^{r} \lambda_{i_t} y_{i_t}\},$ for some scalars $ \lambda_{i_t} \in \mathbb{K}.$ Since $\mathbb{Y}$ is strictly convex, $M_g$ is unique up to  scalar multiplication. Suppose on the contrary that $M_g= \{  \sum_{t=1}^{r} \lambda_{i_t} y_{i_t}+  \sum_{l=1}^{s}\lambda_{j_l} y_{j_l}\},$  for some $\{j_1, j_2, \ldots, j_s\} \subset \{ 1, 2, \ldots, n\}\setminus \{ i_1, i_2, \ldots, i_r\}$ and $\lambda_{j_l} \in \mathbb{K} \setminus \{0\},$ for any $ 1 \leq l \leq s.$  Observe that $ g(\sum_{t=1}^{r} \lambda_{i_t} y_{i_t} )= g(\sum_{t=1}^{r} \lambda_{i_t} y_{i_t}+  \sum_{l=1}^{s}\lambda_{j_l} y_{j_l} )=1$  and so  $\|\sum_{t=1}^{r} \lambda_{i_t} y_{i_t}\| \geq 1.$ If $\|\sum_{t=1}^{r} \lambda_{i_t} y_{i_t}\|=1,$ it contradicts that $M_g$ is unique up to  scalar multiplication (as $\lambda_{j_l} \neq 0$). If $\|\sum_{t=1}^{r} \lambda_{i_t} y_{i_t}\|> 1,$ then $\| \sum_{t=1}^{r} \lambda_{i_t} y_{i_t}+  \sum_{l=1}^{s}\lambda_{j_l} y_{j_l}\| =1 < \|\sum_{t=1}^{r} \lambda_{i_t} y_{i_t}\|,$ which implies that $ \sum_{t=1}^{r} \lambda_{i_t} y_{i_t} \not\perp_B \sum_{l=1}^{s}\lambda_{j_l} y_{j_l}.$ Clearly, this contradicts the fact that $\mathcal{B}$ is a strong Auerbach basis. This proves our claim.
		Now,  $y^*_k( \sum_{t=1}^{r} \lambda_{i_t} y_{i_t})=0,$ for any $k \in \{1,2, \ldots, n\} \setminus \{ i_1, i_2, \ldots, i_r\},$ and so using  \cite[Th. 3.2]{S2021} it is easy to conclude that 
		\[
		span\{ y^*_{i_1}, y^*_{i_2}, \ldots, y^*_{i_r}\} \perp_B span\{ \{y^*_1, y^*_2, \ldots, y^*_n\} \setminus \{ y^*_{i_1}, \ldots, y^*_{i_r}\}\}.
		\]
	Thus, $\{y^*_1, y^*_2, \ldots, y^*_n\}$ is a strong Auerbach basis of $W.$
	
We next claim that the support functional for an element $ y = \sum_{t=1}^{r} c_{i_t} y_{i_t} \in S_{T(\mathbb{X}_0)},$  $c_{i_t} \in \mathbb{K} \setminus \{0\},$ is of the form $ \sum_{t=1}^{r} d_{i_t} y^*_{i_t},$ for some scalars $d_{i_t} \in \mathbb{K} \setminus \{0\}.$   Suppose on the contrary that  $J(y) = \{ \sum_{t=1}^{r} d_{i_t} y^*_{i_t} + \sum_{l=1}^{s}d_{j_l} y^*_{j_l}\},$ for some $d_{j_l} \in \mathbb{K} \setminus \{0\},$ where $\{j_1, j_2, \ldots, j_s\} \subset \{ 1,2, \ldots, n\} \setminus \{ i_1, i_2, \ldots, i_r\}.$ Then $ \sum_{t=1}^{r} d_{i_t} y^*_{i_t}( y) = (\sum_{t=1}^{r} d_{i_t} y^*_{i_t} + \sum_{l=1}^{s}d_{j_l} y^*_{j_l}) (y)=1.$ Clearly, $\| \sum_{t=1}^{r} d_{i_t} y^*_{i_t}\|\geq 1.$ If $\|  \sum_{t=1}^{r} d_{i_t} y^*_{i_t}\|=1,$ then it contradicts that $J(y)$ is a singleton, as $\mathbb{Y}$ is smooth. If $ \|\sum_{t=1}^{r} d_{i_t} y^*_{i_t}\| > 1,$ then $ \| \sum_{t=1}^{r} d_{i_t} y^*_{i_t} + \sum_{l=1}^{s}d_{j_l} y^*_{j_l} \|=1<\|\sum_{t=1}^{r} d_{i_t} y^*_{i_t}\|, $ which implies $ \sum_{t=1}^{r} d_{i_t} y^*_{i_t} \not\perp_B  \sum_{l=1}^{s}d_{j_l} y^*_{j_l},$ contradicting that $\{y^*_1, y^*_2, \ldots, y^*_n\}$ is a strong Auerbach basis. Thus 
$J(y) = \{ \sum_{t=1}^{r} d_{i_t} y^*_{i_t} \}.$ 
Next, if possible let  $d_{i_r}=0.$ Then $\sum_{t=1}^{r-1} d_{i_t} y^*_{i_t}$ is a support functional of $\sum_{t=1}^{r} c_{i_t} y_{i_t}.$ Since $\mathbb{Y}$ is  strictly convex, $M_{\sum_{t=1}^{r-1} d_{i_t} y^*_{i_t}}$ is unit scalar multiple of $ \sum_{t=1}^{r} c_{i_t} y_{i_t} .$  That contradicts the fact that the norm attainment element of $ \sum_{t=1}^{r-1} d_{i_t} y^*_{i_t}$ is of the form $\sum_{t=1}^{r-1} \sigma_{i_t} y_{i_t}, $ for some $\sigma_{i_t} \in \mathbb{K}.$ Thus the claim is established.
	
	Let $Z = span \bigg\{ (( \alpha_i\beta_j ))_{1 \leq i \leq n, 1 \leq j \leq n} \in \mathbb{K}^{n^2}: \| \sum_{i=1}^n \alpha_i x_i\|=1, \sum_{j=1}^{n}\beta_j y^*_j \in Ext(J(\sum_{i=1}^{n} \alpha_i y_i))
	\bigg\}.$  Then $i_{S_{\mathbb{X}_0}}(T) = dim\, Z.$ Choose $ \alpha_r=1, \alpha_t =0, $ for any $t \in \{1, 2, \ldots, n\} \setminus \{r\}.$ If $\sum_{j=1}^n \beta_j y^*_j$ is a support functional of $ \sum_{i=1}^n \alpha_i T(x_i)= \sum_{i=1}^n \alpha_i y_i= y_r,$ then it is easy to observe that $\beta_r=1, \beta_t=0,$ for any $t \in \{1, 2, \ldots, n\} \setminus \{r\}.$ Therefore, $(0, 0, \ldots, 0,  \alpha_r \beta_r, 0, \ldots, 0)=   \widetilde{e_{rr}} \in Z,$ for any $1 \leq r \leq n.$
	 	Let $ r, s \in \{1, 2, \ldots, n\}$ such that $r \neq s.$ Choose $\alpha_{r} \neq 0, \alpha_{s} \neq 0$ and $\alpha_j=0,$ for any $j \in \{1, 2, \ldots, n\} \setminus \{r, s\}$ such that $\|\sum_{i=1}^n \alpha_i x_i\|=1.$  Then  $J(\sum_{i=1}^n \alpha_i Tx_i)= J(\sum_{i=1}^n \alpha_i y_i)= J( \alpha_{r} y_{r} + \alpha_{s} y_{s}) = \{ \beta_{r} y^*_{r}+ \beta_{s}y^*_{s}\},$ where $\beta_{r}, \beta_{s} \in \mathbb{K} \setminus \{0\}.$ Therefore, $\alpha_{r} \beta_{r} \widetilde{e_{rr}}+ \alpha_{r} \beta_{s} \widetilde{e_{rs}} +  \alpha_{s} \beta_{r} \widetilde{e_{sr}} + \alpha_{s} \beta_{s} \widetilde{e_{ss}} \in Z.$ Then clearly, $    \alpha_{r} \beta_{s} \widetilde{e_{rs}} + \alpha_{s} \beta_{r} \widetilde{e_{sr}} \in Z.$ Therefore, for any $1 \leq i \neq j \leq n,$ $   \alpha_{i} \beta_{j} \widetilde{e_{ij}} +  \alpha_{j} \beta_{i} \widetilde{e_{ji}} \in Z,$ which implies that $dim~Z \geq n+ \frac{n^2-n}{2}= \binom{n+1}{2}.$ Thus $i_{S_{\mathbb{X}_0}}(T) \geq \binom{n+1}{2}$ and so using Theorem \ref{order} we get (i).
	 	
	 	For the proof of (ii), consider  $\mathbb{K}= \mathbb{C}.$ Now if $\alpha_{r} \neq 0, \alpha_{s} \neq 0$ and $\alpha_{t}=0, \forall t \in \{ 1, 2, \ldots, n\} \setminus \{ r, s\},$ then $\alpha_{r} \beta_{s} \widetilde{e_{rs}} + \alpha_{s} \beta_{r} \widetilde{e_{sr}} \in Z.$ Suppose that $ \alpha_{r} \beta_{s}= k \alpha_{s} \beta_{r},$ where $k \in \mathbb{C} \setminus \{0\}.$ Take $\alpha'_{r}= \frac{ \sqrt{-k}}{\|  \sqrt{-k} x_{r}+ x_{s}\|}, \alpha'_{s}= \frac{1}{\|  \sqrt{-k} x_{r}+ x_{s}\|}.$ Clearly $\alpha'_{r} x_{r} + \alpha'_{s} x_{s} \in S_{\mathbb{X}_0}.$ Let $J(\alpha'_{r} y_{r} + \alpha'_{s} y_{s} )= \{\beta'_{r} y^*_{r} + \beta'_{s} y^*_{s}\}.$ This implies $\alpha'_{r} \beta'_{r} + \alpha'_{s} \beta'_{s}=1.$ It is easy to  to verify that 
	 	$\{\alpha_{r} \beta_{s} \widetilde{e_{rs}} + \alpha_{s} \beta_{r} \widetilde{e_{sr}},~ \alpha'_{r} \beta'_{s} \widetilde{e_{rs}} + \alpha'_{s} \beta'_{r} \widetilde{e_{sr}} \}$ is a linearly independent set in $Z,$ and so  $ \widetilde{e_{rs}}, \widetilde{e_{sr}} \in Z.$ Therefore for any $i,j  \in \{1, 2, \ldots, n\},$ $ \widetilde{e_{ij}} \in Z.$ So, $ dim~Z= n^2.$ Thus $i_{S_{\mathbb{X}_0}}(T)  = n^2$ and so using Theorem \ref{order} we get (ii).

	\end{proof}

 Using the  above theorem, the following corollary can be obtained by means of a straightforward calculation.

\begin{cor}\cite[Th. 2.1]{MDP20}
		Let $\mathbb{H}_1, \mathbb{H}_2$ be Hilbert spaces over the field $\mathbb{K}$ and   let  $T \in S_{\mathbb{L}(\mathbb{H}_1, \mathbb{H}_2)}.$   Let $M_{T}$ be the unit sphere of an $n$-dimensional subspace of $\mathbb{H}_1$ and let $\|T\|_{\mathbb{H}_1^\perp} < 1.$  Then
		\begin{itemize}
				\item[(i)] $T$ is  $\binom{n+1}{2}$-smooth, if $\mathbb{K} = \mathbb{R}.$ 
			\item[(ii)]  $T$ is $n^2$-smooth, if $\mathbb{K} = \mathbb{C}.$ 
		\end{itemize}
	\end{cor}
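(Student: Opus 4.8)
The plan is to compute the index of smoothness $i_{S_{\mathbb{X}_0}}(T)$ explicitly with respect to a carefully chosen pair of bases and then read off the order of smoothness from Theorem \ref{order}. First I would record the structural consequences of the hypotheses: by strict convexity of $\mathbb{X}$ we have $M_T \cap Ext(B_{\mathbb{X}}) = M_T = S_{\mathbb{X}_0}$, so $T$ restricts to a linear isometry of $\mathbb{X}_0$ onto the $n$-dimensional space $T(\mathbb{X}_0)$. Fixing the strong Auerbach basis $\mathcal{B}=\{y_1,\dots,y_n\}$ of $T(\mathbb{X}_0)$ and pulling it back to a basis $\{x_1,\dots,x_n\}$ of $\mathbb{X}_0$ via $Tx_i=y_i$, I would use coproximinality together with Lemma \ref{lemma:coproximinal} to conclude that $W=\mathrm{span}\{y^*\in S_{\mathbb{Y}^*}: y^*\in J(y),\ y\in T(\mathbb{X}_0)\}$ is exactly $n$-dimensional, and then produce the biorthogonal functionals $y_i^*\in S_{\mathbb{Y}^*}$ (which exist by the strong Auerbach property) as a basis of $W$. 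By Proposition \ref{invariance} the index is independent of these choices, so this setup is legitimate.

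The technical heart of the argument is a \emph{support-preservation} principle for this pair of bases. On the dual side I would first show that $\{y_1^*,\dots,y_n^*\}$ is again a strong Auerbach basis of $W$: for a functional $g=\sum_{t}\delta_{i_t}y_{i_t}^*$ with all $\delta_{i_t}\neq 0$, strict convexity of $\mathbb{Y}$ forces $M_g$ to be unique up to a scalar multiple, and I would rule out any spurious coordinate $j_l$ in $M_g$ by invoking the strong Auerbach property of $\mathcal{B}$ to contradict Birkhoff--James orthogonality; combining this with $y_k^*(\sum_t\lambda_{i_t}y_{i_t})=0$ for $k\notin\{i_1,\dots,i_r\}$ and the orthogonality criterion of \cite[Th. 3.2]{S2021} yields the strong Auerbach property of $\{y_i^*\}$. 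Dually, smoothness of $\mathbb{Y}$ makes $J(y)$ a singleton, and the same type of argument shows that for $y=\sum_t c_{i_t}y_{i_t}$ with all $c_{i_t}\neq 0$ the unique support functional is $J(y)=\{\sum_t d_{i_t}y_{i_t}^*\}$ with all $d_{i_t}\neq 0$; that is, a vector supported on an index set $\{i_1,\dots,i_r\}$ has its support functional supported on exactly the same index set. This is the step I expect to be the main obstacle, since it is what forces the extreme support functionals of $T$ (via Lemma \ref{wojcik}) into a predictable pattern of coordinate blocks, and it relies simultaneously on strict convexity, smoothness, and the strong Auerbach geometry.

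With support-preservation in hand I would compute $Z$ directly from Definition \ref{definition:index}. Taking $\alpha_r=1$ and the other $\alpha_i=0$ shows each $\widetilde{e_{rr}}\in Z$, giving the $n$ diagonal tuples. For $r\neq s$, choosing $\alpha_r,\alpha_s\neq 0$ (others zero) with $\|\alpha_r x_r+\alpha_s x_s\|=1$, support-preservation gives $J(\alpha_r y_r+\alpha_s y_s)=\{\beta_r y_r^*+\beta_s y_s^*\}$ with $\beta_r,\beta_s\neq 0$, so $\alpha_r\beta_r\widetilde{e_{rr}}+\alpha_r\beta_s\widetilde{e_{rs}}+\alpha_s\beta_r\widetilde{e_{sr}}+\alpha_s\beta_s\widetilde{e_{ss}}\in Z$; subtracting the diagonal tuples already in $Z$ leaves the symmetric off-diagonal combination $\alpha_r\beta_s\widetilde{e_{rs}}+\alpha_s\beta_r\widetilde{e_{sr}}\in Z$. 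Over $\mathbb{R}$ this produces one new independent vector for each unordered pair $\{r,s\}$, whence $\dim Z\geq n+\binom{n}{2}=\binom{n+1}{2}$, and Theorem \ref{order} gives (i).

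For (ii) over $\mathbb{C}$ I would separate the two off-diagonal coordinates. Given the first relation with ratio $k$ defined by $\alpha_r\beta_s=k\,\alpha_s\beta_r$, I would feed in a second admissible choice built from $\sqrt{-k}$, namely $\alpha'_r=\sqrt{-k}/\|\sqrt{-k}\,x_r+x_s\|$ and $\alpha'_s=1/\|\sqrt{-k}\,x_r+x_s\|$, so that the resulting combination $\alpha'_r\beta'_s\widetilde{e_{rs}}+\alpha'_s\beta'_r\widetilde{e_{sr}}$ is linearly independent from the first; the two together force both $\widetilde{e_{rs}}\in Z$ and $\widetilde{e_{sr}}\in Z$. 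Hence every $\widetilde{e_{ij}}\in Z$, so $\dim Z=n^2$, and Theorem \ref{order} gives (ii). The real case stops short of $n^2$ precisely because $\sqrt{-k}$ need not exist in $\mathbb{R}$, which is why the off-diagonal data can only be recovered in symmetric pairs.
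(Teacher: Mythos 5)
Your argument reproduces, almost step for step, the paper's proof of Theorem \ref{smooth} (the strong Auerbach dual basis, the support-preservation claim, the diagonal tuples $\widetilde{e_{rr}}$, the symmetric off-diagonal combinations, and the $\sqrt{-k}$ trick over $\mathbb{C}$), whereas the paper proves this corollary simply by invoking that theorem and doing the Hilbert-space-specific bookkeeping: checking that $\mathbb{H}_1,\mathbb{H}_2$ are strictly convex, smooth and reflexive, that compact operators form an $M$-ideal, that every closed subspace is coproximinal (orthogonal projection) with an orthonormal, hence strong Auerbach, basis, and that $\|T\|_{\mathbb{H}_1^\perp}<1$ yields the distance condition. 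Re-deriving the general theorem is redundant but not wrong; the complex case (ii) goes through, since $Z\subset\mathbb{C}^{n^2}$ makes $n^2$ an automatic upper bound.

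The genuine gap is in part (i). The corollary asserts that $T$ is \emph{exactly} $\binom{n+1}{2}$-smooth over $\mathbb{R}$, i.e. $\dim\,\mathrm{span}\,J(T)=\binom{n+1}{2}$, but your argument (like Theorem \ref{smooth}(i) itself, which is stated only as ``at least'') establishes only the lower bound $\dim Z\geq n+\binom{n}{2}$. Theorem \ref{order} converts the exact value of $i_{M_T}(T)$ into the order of smoothness, so a lower bound on $\dim Z$ gives only a lower bound on $k$. The missing ingredient is the upper bound, and this is precisely where the Hilbert structure must be used: taking $\{y_1,\dots,y_n\}$ orthonormal, the unique support functional of $y=\sum_i\alpha_i y_i\in S_{T(\mathbb{X}_0)}$ is $\sum_j\overline{\alpha_j}\,y_j^*$, so $\beta_j=\overline{\alpha_j}$ and every generating tuple $((\alpha_i\beta_j))$ of $Z$ is, in the real case, the symmetric rank-one matrix $\alpha\alpha^{T}$. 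Hence $Z$ lies inside the $\binom{n+1}{2}$-dimensional space of real symmetric $n\times n$ matrices, giving $\dim Z\leq\binom{n+1}{2}$ and therefore equality. Without this step your proof does not exclude, say, $\dim Z=n^2$ in the real case, and the exactness matters: the paper's later characterization of $p=2$ among the $\ell_p^n(\mathbb{R})$ spaces depends on $\binom{m+1}{2}$-smoothness holding \emph{only} for Hilbert space.
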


As an application of Theorem \ref{smooth}, we find the $k$-smoothness of some special operators on $\ell_p^n$ spaces. Observe that   the standard ordered basis $\mathcal{B} = \{e_1, e_2, \ldots, e_n\} $ of $\ell_p^n$ is a strong Auerbach basis of $\ell_p^n,$  where $e_i = (0, \ldots,0,1,0 \ldots,0)$ with $1$ in $i$-th position and $0$ elsewhere.

\begin{theorem}\label{l_p}
 Let $T \in \mathbb{L}(\ell_p^n),$ where $1 <p < \infty, \, p \neq 2$ and let $\|T\|=1.$  Suppose that $M_T$ is the unit sphere of an $m$-dimensional subspace $\mathbb{X}$ of $\ell_p^n,$ where $\mathbb{X}= span~ \mathcal{B}_1$ and $\mathcal{B}_1 \subset \mathcal{B}.$    If $T|_{\mathbb{X}}=I|_{\mathbb{X}},$ then $T$ is $m^2$-smooth.
\end{theorem}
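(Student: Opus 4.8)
The plan is to realize $T$ as an instance of Theorem \ref{smooth} and then, in the real scalar case, to sharpen the generic lower bound $\binom{m+1}{2}$ to the exact value $m^2$ by invoking the explicit duality map of $\ell_p^n$. After relabelling the coordinates I may assume $\mathcal{B}_1 = \{e_1, \dots, e_m\}$, so that $\mathbb{X} = \operatorname{span}\{e_1, \dots, e_m\}$. The space $\ell_p^n$ is reflexive, strictly convex and smooth for $1 < p < \infty$; being finite-dimensional, $\mathbb{K}(\ell_p^n) = \mathbb{L}(\ell_p^n)$ is trivially an $M$-ideal in itself and $\operatorname{dist}(T, \mathbb{K}(\ell_p^n)) = 0 < 1$. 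Since $T|_{\mathbb{X}} = I|_{\mathbb{X}}$, we have $T(\mathbb{X}) = \mathbb{X}$, whose standard basis is a strong Auerbach basis (it is the restriction of $\mathcal{B}$), and which is coproximinal by Lemma \ref{lemma:coproximinal}, because the single-valued duality map of $\ell_p^n$ carries every $y = \sum_{i=1}^m \alpha_i e_i$ to the functional $\sum_{i=1}^m |\alpha_i|^{p-1}\operatorname{sgn}(\alpha_i)\, e_i^*$ supported on the same coordinates, so $\dim \operatorname{span}\{y^* : y^* \in J(y),\, y \in \mathbb{X}\} = m$. Thus all hypotheses of Theorem \ref{smooth} hold with $\mathbb{X}_0 := \mathbb{X}$, $y_i = Tx_i = e_i$ and $y_i^* = e_i^*$.

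Granting this, part (ii) of Theorem \ref{smooth} immediately settles the complex case: $T$ is $m^2$-smooth. The real case is where the real work lies, and here I would reenter the proof of Theorem \ref{smooth} and show that the subspace $Z = \operatorname{span}\{((\alpha_i\beta_j))_{1\le i,j\le m} : \|\sum_i \alpha_i e_i\| = 1,\ \sum_j \beta_j e_j^* \in Ext(J(\sum_i \alpha_i e_i))\}$ is in fact all of $\mathbb{K}^{m^2}$, rather than merely the $\binom{m+1}{2}$-dimensional piece produced in general. The diagonal vectors $\widetilde{e_{rr}} \in Z$ exactly as in Theorem \ref{smooth}. Fixing $r \ne s$ and choosing $\alpha_r, \alpha_s > 0$ with $\alpha_r^p + \alpha_s^p = 1$ and all other coordinates zero, the explicit duality map gives $\beta_r = \alpha_r^{p-1}$ and $\beta_s = \alpha_s^{p-1}$, whence $\alpha_r\beta_s\,\widetilde{e_{rs}} + \alpha_s\beta_r\,\widetilde{e_{sr}} \in Z$ with ratio $\tfrac{\alpha_r\beta_s}{\alpha_s\beta_r} = (\alpha_s/\alpha_r)^{p-2}$.

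Since $p \ne 2$, the exponent $p-2$ is nonzero, so $(\alpha_s/\alpha_r)^{p-2}$ is a strictly monotone function of $\alpha_s/\alpha_r$; hence two distinct admissible choices of $(\alpha_r, \alpha_s)$ produce two vectors of $\operatorname{span}\{\widetilde{e_{rs}}, \widetilde{e_{sr}}\} \cap Z$ with different ratios, which are therefore linearly independent. Consequently $\widetilde{e_{rs}}, \widetilde{e_{sr}} \in Z$ for every pair $r \ne s$, and together with the diagonal vectors this forces $\widetilde{e_{ij}} \in Z$ for all $i, j$, i.e. $\dim Z = m^2$. Reading this back through Theorem \ref{order} (with $i_{M_T}(T) = \dim Z = m^2$) yields that $T$ is $m^2$-smooth. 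I expect the only genuine obstacle to be this last, real-scalar, step: the linear independence of the two off-diagonal vectors is precisely what fails when $p = 2$ (there the ratio is identically $1$, only the symmetric combination survives, and one is left with the weaker bound $\binom{m+1}{2}$), so the argument must be arranged so that the hypothesis $p \ne 2$ is used exactly at this point.
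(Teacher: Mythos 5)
Your proposal is correct and follows essentially the same route as the paper: reduce to Theorem \ref{smooth} (which settles the complex case), then in the real case use the explicit $\ell_p$ duality map $\beta_j = \alpha_j|\alpha_j|^{p-2}/c$ to produce, for each pair $r\neq s$, two linearly independent vectors in $\mathrm{span}\{\widetilde{e_{rs}},\widetilde{e_{sr}}\}\cap Z$, which is exactly where $p\neq 2$ enters. The only cosmetic difference is that you verify coproximinality of $\mathbb{X}$ directly via Lemma \ref{lemma:coproximinal}, while the paper deduces it from $\mathbb{X}\perp_B\mathrm{span}\{e_{m+1},\dots,e_n\}$ and a cited lemma; both are valid.
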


\begin{proof} 
	Without loss generality we assume that $\mathcal{B}_1= \{e_1, e_2, \ldots, e_m\}.$ Let $\mathbb{Y} =  span\,\{e_{m+1}, e_{m+2}, \ldots, e_n\}.$   Then $\ell_p^n = \mathbb{X} \oplus \mathbb{Y}$ and clearly, $\mathbb{X} \perp_B \mathbb{Y}.$ 
	 So, from \cite[Lemma 1.3]{MN}, $\mathbb{X}$ is a coproximinal subspace of $\ell_p^n.$
Therefore, $span \{ T(M_T)\}= T(\mathbb{X})= \mathbb{X} = span \{  e_{1}, e_{2}, \ldots, $  $ e_{m}\}$ is a coproximinal subspace of $\ell_p^n$ and it is immediate that $ \{ e_{1}, e_{2}, \ldots,e_{m}\}$ is a strong Auerbach basis of $\mathbb{X}.$ Then from Theorem \ref{smooth},  $T$ is $m^2$-smooth, whenever $\mathbb{K}= \mathbb{C}.$ We just need to show that $T$ is $m^2$-smooth whenever $\mathbb{K}= \mathbb{R}.$  Suppose that $\{e^*_j \} = J( e_{j}),$ for  each $ j, \, 1 \leq j \leq m.$ 
So, we have to show that $dim \,Z=m^2,$ where $Z= span \big\{ (( \alpha_i \beta_j))_{1 \leq i,j \leq m} \in \mathbb{R}^{m^2}: \sum_{i=1}^{m} \alpha_i e_{i} \in M_T,  \{ \sum_{j=1}^{m} \beta_j e^*_j\} = J( \sum_{i=1}^{m} \alpha_i e_{i}) \big\}.$
 It is easy to see  that $ \beta_j = \frac{\alpha_j |\alpha_j|^{p-2}}{(|\alpha_1|^p+ | \alpha_2|^p+ \ldots+ | \alpha_m|^p)^{1- \frac{1}{p}}},$ for any $ 1 \leq j \leq m.$
Following the  arguments  as given in the proof of Theorem \ref{smooth}, we observe that $\widetilde{e_{tt}} \in Z,$ for any $1 \leq t \leq m$  and $ \alpha_r \beta_s  \widetilde{e_{rs}} + \alpha_s \beta_r  \widetilde{e_{sr}} \in Z,$ for any $ 1 \leq r \neq s \leq m,$ which implies,
  $ \frac{\alpha_r \alpha_s |\alpha_r|^{p-2}}{c}  \widetilde{e_{rs}}+ \frac{\alpha_r \alpha_s |\alpha_s|^{p-2}}{c} \widetilde{e_{sr}} \in Z,$ where $c= (|\alpha_1|^p+ | \alpha_2|^p+ \ldots+ | \alpha_m|^p)^{1- \frac{1}{p}}.$ Take $(\alpha'_r, \alpha'_s) \in \mathbb{R}^2 \setminus \{(\alpha_r, \alpha_s),(0,0)\}$ such that $\frac{|\alpha_r|}{|\alpha_s|} \neq \frac{|\alpha_r'|}{|\alpha_s'|}.$ Then it is easy to observe that $\{ \frac{\alpha_r \alpha_s |\alpha_r|^{p-2}}{c}  \widetilde{e_{rs}}+ \frac{\alpha_r \alpha_s |\alpha_s|^{p-2}}{c} \widetilde{e_{sr}},  \frac{\alpha'_r \alpha'_s |\alpha'_r|^{p-2}}{c}  \widetilde{e_{rs}}+ \frac{\alpha'_r \alpha'_s |\alpha'_s|^{p-2}}{c} \widetilde{e_{sr}} \}$ is a  linearly independent set in $Z,$ which implies that  $\widetilde{e_{rs}}, \widetilde{e_{sr}} \in Z,$  for any $1 \leq r, s \leq m.$
Therefore,  $dim ~Z=m^2.$ This completes the theorem. 
\end{proof}

 From Theorem \ref{l_p} and  \cite[Th. 2.1]{MDP20}, the following result is immediate, which characterizes the real Hilbert spaces among the $\ell_p^n(\mathbb{R})$ spaces.
 
\begin{theorem}
 Let $1 <p < \infty.  $ Let $T \in S_{\mathbb{L}(\ell_p^n(\mathbb{R}))}$  be such that $M_T$ is  the unit sphere of an $m$-dimensional subspace of $\ell_p^n(\mathbb{R}).$ Then $T$ is $\binom{m+1}{2}$-smooth if and only if $p=2.$\\
\end{theorem}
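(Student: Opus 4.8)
The plan is to establish the two implications separately, after first checking that $\ell_p^n(\mathbb{R})$ satisfies the standing hypotheses of the section. Being finite-dimensional, the space is reflexive, and for $1<p<\infty$ it is both smooth and strictly convex. Since $\mathbb{K}(\ell_p^n)=\mathbb{L}(\ell_p^n)$ in finite dimensions, the space $\mathbb{K}$ is trivially an $M$-ideal in $\mathbb{L}$ (its annihilator being $\{0\}$) and $\mathrm{dist}(T,\mathbb{K})=0<1$; thus all the operator-theoretic hypotheses are automatic. Writing $\mathbb{X}_0$ for the $m$-dimensional subspace with $M_T=S_{\mathbb{X}_0}$, I will also assume $m\geq 2$: for $m=1$ one has $\binom{m+1}{2}=1=m^2$, and $T$ is automatically $1$-smooth for every $p$ (by Lemma \ref{wojcik} and the smoothness of $\ell_p^n$, $Ext(J(T))$ reduces to a single functional up to sign), so the asserted equivalence is only meaningful for $m\geq 2$.

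For the implication $p=2\Rightarrow T$ is $\binom{m+1}{2}$-smooth I would invoke the Hilbert-space characterization directly. When $p=2$, the space $\ell_2^n(\mathbb{R})$ is a real Hilbert space; the hypothesis that $M_T$ is the unit sphere of the $m$-dimensional subspace $\mathbb{X}_0$ forces $T$ to attain its norm nowhere outside $\mathbb{X}_0$, so $\|T\|_{\mathbb{X}_0^{\perp}}<1$. We are then exactly in the setting of \cite[Th. 2.1]{MDP20} (recorded as the corollary above), whose real case yields that $T$ is precisely $\binom{m+1}{2}$-smooth.

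For the reverse implication I would argue by contraposition. Assuming $p\neq 2$, Theorem \ref{l_p} supplies the order of smoothness $m^2$ in the situation where $\mathbb{X}_0$ is a coordinate subspace and $T$ restricts to the identity there. Since $m^2-\binom{m+1}{2}=\binom{m}{2}>0$ for $m\geq 2$, the operator is $m^2$-smooth with $m^2>\binom{m+1}{2}$, hence cannot be $\binom{m+1}{2}$-smooth. Together with the first implication this gives the stated equivalence.

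The crux — and the step I expect to require the most care — is justifying the reduction underlying the reverse direction, since the theorem as stated allows an arbitrary $m$-dimensional $\mathbb{X}_0$ while Theorem \ref{l_p} is phrased for the coordinate/identity model. The order of smoothness is governed, via Theorem \ref{order}, by $\dim Z$, and the sole divergence between the $p=2$ and $p\neq 2$ cases occurs at the off-diagonal pairs $(r,s)$ in the index computation of Theorem \ref{smooth}: one always obtains the single vector $\alpha_r\beta_s\,\widetilde{e_{rs}}+\alpha_s\beta_r\,\widetilde{e_{sr}}\in Z$, and a second independent vector appears precisely when the ratio $\alpha_r\beta_s:\alpha_s\beta_r$ takes two distinct values as $\sum_i\alpha_i x_i$ ranges over $M_T$. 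For $\ell_p^n$ the support functional of $\sum_i\alpha_i e_i$ has coordinates proportional to $\alpha_j|\alpha_j|^{p-2}$, so this ratio equals $(|\alpha_s|/|\alpha_r|)^{p-2}$, which is constant exactly when $p=2$ and varies otherwise. The main obstacle is thus to confirm that this dichotomy — linearity of the duality map if and only if $p=2$ — survives the reduction of a general isometric $T|_{\mathbb{X}_0}$ to the coordinate/identity model, for which the strong Auerbach structure of $\{e_1,\dots,e_n\}$ and the coproximinality of coordinate subspaces (via \cite[Lemma 1.3]{MN}) are precisely the ingredients that make Theorem \ref{l_p} applicable.
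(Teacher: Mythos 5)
Your proposal is correct and follows essentially the same route as the paper, which offers no written proof at all: it simply declares the result ``immediate'' from Theorem \ref{l_p} (giving $m^2$-smoothness for $p\neq 2$ in the coordinate/identity model) and \cite[Th.~2.1]{MDP20} (giving $\binom{m+1}{2}$-smoothness when $p=2$), exactly the two ingredients you combine. The one point worth settling is the ``reduction'' you single out as the crux: it is not actually needed, because the intended reading of the ``only if'' direction is existential rather than universal --- to show that $p\neq 2$ destroys the conclusion, a single witness suffices, and Theorem \ref{l_p} supplies one (an operator with $M_T$ the unit sphere of an $m$-dimensional subspace that is $m^2$-smooth, with $m^2>\binom{m+1}{2}$ for $m\geq 2$); no claim about arbitrary $m$-dimensional subspaces or arbitrary isometric restrictions is required. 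If one insists on the stronger literal reading (every such $T$ fails to be $\binom{m+1}{2}$-smooth when $p\neq 2$), then neither your argument nor the paper's establishes it, so your instinct that something is missing there is sound --- but it is a gap in the statement's phrasing rather than in the intended proof. Your exclusion of the degenerate case $m=1$, where $\binom{m+1}{2}=m^2=1$ and the equivalence fails for every $p$, is a legitimate caveat that the paper omits.
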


The question that arises naturally in this connection is given in the form of the following conjecture.

\begin{conjecture}
	Let $\mathbb{X}$ be a real Banach space. Then the following statements are  equivalent:
	\begin{itemize}
		\item[(i)] $\mathbb{X}$ is a Hilbert space
		\item[(ii)] $T \in S_{\mathbb{L}(\mathbb{X})}$ is $\binom{n+1}{2}$-smooth if and only if $dim~span(M_T)=n,$ where $n \in \mathbb{N}.$ \\  
	\end{itemize}
\end{conjecture}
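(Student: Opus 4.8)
The plan is to prove the two implications by very different means: (i)$\Rightarrow$(ii) is a consequence of the Hilbert space theory already cited, while (ii)$\Rightarrow$(i) carries the real content. For (i)$\Rightarrow$(ii), let $\mathbb{X}=\mathbb{H}$ be a real Hilbert space; then $\mathbb{H}$ is reflexive, strictly convex and smooth and $\mathbb{K}(\mathbb{H})$ is an $M$-ideal in $\mathbb{L}(\mathbb{H})$, so the corollary to \cite{MDP20} applies. Given $T\in S_{\mathbb{L}(\mathbb{H})}$ with $d:=\dim\mathrm{span}(M_T)<\infty$, the set $M_T$ is the unit sphere of the top eigenspace of $T^*T$, a $d$-dimensional subspace, and $\mathrm{dist}(T,\mathbb{K}(\mathbb{H}))<1$ because the eigenvalue $1$ has finite multiplicity; hence $T$ is exactly $\binom{d+1}{2}$-smooth. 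Since $n\mapsto\binom{n+1}{2}$ is strictly increasing on $\mathbb{N}$ and therefore injective, the order of smoothness of $T$ determines $d$, and the biconditional in (ii) follows. The same injectivity lets me reformulate (ii) as the single assertion that every $T\in S_{\mathbb{L}(\mathbb{X})}$ has order of smoothness exactly $\binom{d+1}{2}$ with $d=\dim\mathrm{span}(M_T)$; this is the hypothesis I would exploit. I would carry out the argument first for finite-dimensional $\mathbb{X}$, where $\mathbb{K}(\mathbb{X})=\mathbb{L}(\mathbb{X})$ is trivially an $M$-ideal in itself and $\mathrm{dist}(T,\mathbb{K}(\mathbb{X}))=0<1$, so Theorems \ref{rank1} and \ref{order} apply without further hypotheses.

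Step 1 extracts smoothness and strict convexity of $\mathbb{X}$ from rank $1$ operators. If some $x_0\in S_{\mathbb{X}}$ were $m$-smooth with $m\geq 2$, I would pick $f\in S_{\mathbb{X}^*}$ exposing a single point (up to sign) and set $T=f\otimes x_0$; then $T(M_T)=\{\pm x_0\}$ and $\dim\mathrm{span}(M_T)=1$, while Theorem \ref{rank1} gives order of smoothness $m\cdot 1=m\geq 2>1=\binom{2}{2}$, contradicting the reformulated (ii). Hence $\mathbb{X}$ is smooth, so every range direction is a smooth point. Now if $\mathbb{X}$ failed to be strictly convex there would be a nontrivial segment on $S_{\mathbb{X}}$ and a functional $f$ constant on it; for $T=f\otimes x_0$ one then has $\dim\mathrm{span}(M_T)=d\geq 2$, and Theorem \ref{rank1} (with $m=1$) gives order of smoothness $1\cdot d=d$, whereas (ii) demands $\binom{d+1}{2}$, forcing $d=\binom{d+1}{2}$ and hence $d=1$, a contradiction. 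Thus $\mathbb{X}$ is both smooth and strictly convex.

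Step 2 reduces to two-dimensional sections. Fix a $2$-dimensional subspace $\mathbb{X}_0$ with basis $x_1,x_2$ and build $T\in S_{\mathbb{L}(\mathbb{X})}$ with $M_T=S_{\mathbb{X}_0}$, with $T|_{\mathbb{X}_0}$ an isometry, and with distance to $\mathbb{K}$ less than $1$; using Proposition \ref{invariance} I may take the basis $\{y^*_1,y^*_2\}$ biorthogonal to $\{x_1,x_2\}$. By (ii), $T$ is exactly $\binom{3}{2}=3$-smooth, so by Theorem \ref{order} the associated space $Z\subseteq\mathbb{K}^{4}$ has $\dim Z=3$. Writing $\sum\alpha_i x_i$ for a unit vector of $\mathbb{X}_0$ and $\sum\beta_j y^*_j$ for its unique support functional, the pure states contribute $\widetilde{e_{11}},\widetilde{e_{22}}\in Z$, while a mixed state contributes $(\alpha_1\beta_1,\alpha_1\beta_2,\alpha_2\beta_1,\alpha_2\beta_2)$; hence $\dim Z=3$ forces all the vectors $(\alpha_1\beta_2,\alpha_2\beta_1)$ to be parallel, i.e. $\alpha_1\beta_2=c\,\alpha_2\beta_1$ for a fixed constant $c$ and every unit $\alpha$. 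Combining this with the normalization $\alpha_1\beta_1+\alpha_2\beta_2=1$ and solving the resulting $2\times 2$ linear system for $(\beta_1,\beta_2)$ yields $\beta\propto(\alpha_1,c\,\alpha_2)$, which is precisely the duality map of the norm on $\mathbb{X}_0$; since that is a genuine norm the form $\alpha_1^2+c\,\alpha_2^2$ is positive definite, so $\mathbb{X}_0$ is an inner product space. As every $2$-dimensional section of $\mathbb{X}$ is then Euclidean, the parallelogram law holds for all pairs of vectors and the Jordan--von Neumann theorem identifies $\mathbb{X}$ as a Hilbert space. The infinite-dimensional case reduces to this one, since Hilbertianity is detected on $2$-dimensional sections: one only needs the constructions of Steps 1 and 2 inside finite-dimensional subspaces, where the $M$-ideal and distance hypotheses hold automatically.

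The main obstacle is the operator construction in Step 2: for an arbitrary $2$-dimensional $\mathbb{X}_0$ one must produce $T$ whose norm-attainment set is exactly $S_{\mathbb{X}_0}$, with $T|_{\mathbb{X}_0}$ isometric and the image $T(\mathbb{X}_0)$ coproximinal, so that Lemma \ref{lemma:coproximinal} forces the relevant span of support functionals to be two-dimensional and $Z$ to live in $\mathbb{K}^{4}$ with the clean structure used above. Coproximinality of a prescribed two-dimensional subspace is not automatic in a general Banach space, and ensuring that the complementary part of $T$ strictly contracts \emph{without} creating extra norm-attaining directions is delicate; in the $\ell_p^n$ computation of Theorem \ref{l_p} this is handled by the Birkhoff--James orthogonal splitting $\mathbb{X}\oplus\mathbb{Y}$, and the general case requires a substitute for that splitting. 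A secondary difficulty is the existence of single-point exposing functionals in Step 1, which is where strict convexity interacts with the argument and must be arranged (e.g.\ via Straszewicz-type density of exposed points) before Theorem \ref{rank1} is invoked.
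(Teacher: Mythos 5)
First, a point of order: this statement appears in the paper as an open \emph{conjecture} --- the authors offer no proof of it (indeed, the preceding theorem on $\ell_p^n(\mathbb{R})$ is exactly the evidence that motivates posing it), so your attempt cannot be compared against a proof in the paper and must stand on its own. It does not: there is a concrete error in your forward direction and an unresolved (and likely circular) gap in your reverse direction. For (i)$\Rightarrow$(ii), your claim that $\mathrm{dist}(T,\mathbb{K}(\mathbb{H}))<1$ follows because the eigenvalue $1$ of $T^*T$ has finite multiplicity is false. Take $T$ diagonal on $\ell_2$ with entries $1,\lambda_1,\lambda_2,\ldots$ where $\lambda_k<1$ and $\lambda_k\uparrow 1$: then $M_T$ spans a one-dimensional subspace, yet $\mathrm{dist}(T,\mathbb{K}(\mathbb{H}))=1$, and $T$ is not smooth --- the classical characterization of smooth points of $\mathbb{L}(\mathbb{H})$ demands $\|T\|_{\{x_0\}^{\perp}}<\|T\|$, which is precisely why the hypothesis $\|T\|_{\mathbb{H}_1^{\perp}}<1$ is built into the corollary of \cite[Th.~2.1]{MDP20} that you invoke. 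This example even shows that the biconditional (ii) fails verbatim in infinite-dimensional Hilbert space, so the conjecture must implicitly carry a distance condition or a finite-dimensionality restriction; in any case your proof of the forward implication is wrong as written.

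For (ii)$\Rightarrow$(i), your Step 1 is fine in finite dimensions (exposed points of $B_{\mathbb{X}}$ exist by Straszewicz, and the $M$-ideal and distance hypotheses of Theorem \ref{rank1} hold trivially), but Step 2 --- which you correctly identify as ``the main obstacle'' --- is the entire content of the conjecture, and acknowledging an obstacle is not the same as overcoming it. You must produce, for an \emph{arbitrary} two-dimensional subspace $\mathbb{X}_0$, an operator $T\in S_{\mathbb{L}(\mathbb{X})}$ with $M_T=S_{\mathbb{X}_0}$, with $T|_{\mathbb{X}_0}$ isometric, and with $T(\mathbb{X}_0)$ coproximinal so that Lemma \ref{lemma:coproximinal} forces $\dim\mathbb{W}=2$ and your $Z\subseteq\mathbb{K}^4$ computation applies. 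In Theorem \ref{l_p} this is possible only because the coordinate splitting of $\ell_p^n$ provides a Birkhoff--James orthogonal complement on which $T$ can strictly contract; in a general space no such splitting exists, and the natural constructions collide with Kakutani's theorem: a rank-two operator of the required kind would yield the norm-one projection $(T|_{\mathbb{X}_0})^{-1}\circ T$ onto $\mathbb{X}_0$, and the existence of norm-one projections onto all two-dimensional subspaces already characterizes Hilbert spaces --- so the argument threatens to assume what it proves. The same circularity undermines your reduction of the infinite-dimensional case to two-dimensional sections: hypothesis (ii) concerns operators on all of $\mathbb{X}$, and transplanting it into a finite-dimensional subspace again requires a norm-one projection or norm-preserving extension you do not have (nor do the hypotheses of Lemma \ref{wojcik} hold for free on a general infinite-dimensional $\mathbb{X}$). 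Your two-dimensional algebra --- $\dim Z=3$ forcing the mixed coordinates $(\alpha_1\beta_2,\alpha_2\beta_1)$ to be parallel, hence $\beta\propto(\alpha_1,c\,\alpha_2)$ and an elliptical section --- is sound in spirit and is essentially the mechanism behind Theorem \ref{l_p}, but without the test operators it never gets off the ground; this construction problem is, presumably, exactly why the statement was left as a conjecture.
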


Theorem \ref{order} can be applied to determine the order of smoothness of certain linear operators. This is illustrated in the following example.

\begin{example}
Consider 	$T \in \mathbb{L}( \ell_p^2(\mathbb{R}), \ell_q^2(\mathbb{R}))$ defined as  $T(x, y)= \frac{1}{2^q} ( x+y, x-y).$ Then $M_T= \{ \pm (1,0), \pm (0, 1), \pm (\frac{1}{2^{\frac{1}{p}}}, \frac{1}{2^{\frac{1}{p}}}), \pm (\frac{1}{2^{\frac{1}{p}}}, -\frac{1}{2^{\frac{1}{p}}})\}.$ Clearly $T(1,0)= (\frac{1}{2^q}, \frac{1}{2^q} ), $ $ T(0,1)=(\frac{1}{2^q}, - \frac{1}{2^q}), T(\frac{1}{2^{\frac{1}{p}}}, \frac{1}{2^{\frac{1}{p}}})=(1,0), T(\frac{1}{2^{\frac{1}{p}}}, -\frac{1}{2^{\frac{1}{p}}})=(0,1).$
Suppose that $J(\frac{1}{2^\frac{1}{q}}, \frac{1}{2^\frac{1}{q}})= \{f_1\}, J(\frac{1}{2^\frac{1}{q}}, - \frac{1}{2^\frac{1}{q}}) = \{ f_2\}, J(1, 0)= \{ f_3\}, J(0, 1)= \{ f_4\}.$ Let $\psi$ be the canonical isometric isomorphism from $(\ell_q^2)^*$ to $\ell_p^2.$ Then it is easy to verify that $ \psi ( f_1)= ( \frac{1}{2^{1- \frac{1}{q}}}, \frac{1}{2^{1- \frac{1}{q}}}), \psi(f_2)= (\frac{1}{2^{1- \frac{1}{q}}}, -\frac{1}{2^{1- \frac{1}{q}}}), \psi (f_3)= (1, 0), \psi (f_4)= (0, 1).$ Take $ \{ (1,0), (0,1)\}$ as a maximal linearly independent set in $M_T.$ Then 
$$i_{M_T}(T) = 
	\dim ~span \{ (1, 0, 0, 0), (0, 0, 0, 1), (\frac{1}{2}, \frac{1}{2}, \frac{1}{2}, \frac{1}{2}), (\frac{1}{2}, - \frac{1}{2}, - \frac{1}{2}, \frac{1}{2})\}= 3.$$ Therefore, from Theorem \ref{order}, $T$ is $3$-smooth.\\
\end{example}	
	
We end this section with the following remark.
 	
	\begin{remark}
			In \cite[p.~ 2]{H}, the author proposed the following problem:
			
			\emph{For Banach spaces $\mathbb{X}$ and $\mathbb{Y},$ let $ T \in  \mathbb{K}(\mathbb{X}, \mathbb{Y})$	with $\|T\|=1$. Is it true that $T$ is a multismooth point of finite
				order $k$ in $\mathbb{K}(\mathbb{X}, \mathbb{Y}) $ if and only if $T^*$ attains its norm at only
				finitely many independent vectors, say at $y_1^*, y_2^*, \ldots, y_r^* \in Ext(B_{\mathbb{Y}^*})$ such that each $T^* y_i^*$ is a multismooth point of finite
				order, say $m_i$, in $\mathbb{X}^*$, where $k=m_1+ m_2 +\ldots+m_r?$}
			
			  In  \cite{W2}, the author showed that    answer to the above query is negative. It should be noted that in the finite-dimensional case, the order of smoothness of $T$ is same as the order of smoothness of  $T^*$ (\cite[Prop. 3.8]{MDP20}).
			  Theorem \ref{l_p} also shows that  the above formulation is incorrect. Indeed,   the cardinality of a maximal linearly independent set in  $M_T$ is $m$  but $T$ is $m^2$-smooth, irrespective of the the underlying field being real or complex. Moreover, in \cite[p.~2]{H}, the author essentially tried to establish a connection between the order of smoothness of an operator $T$ and its norm attainment set $M_T.$ Such a connection has been presented in this paper, by means of the definition of `index of smoothness' (Defn. \ref{definition:index}) in Theorem \ref{order}.
	\end{remark}

\section*{Section-II}

 This section is devoted exclusively to linear operators between finite-dimensional polyhedral Banach spaces. It should be noted that we only consider polyhedral Banach spaces over the real field. We begin this section with following well known definition for the sake of completeness.

	\begin{definition}
		Let $\mathbb{X}$ be an $n$-dimensional Banach space. A convex set $F \subset S_{\mathbb{X}}$ is said to be a face of $B_{\mathbb{X}}$ if for some $x_1, x_2 \in S_{\mathbb{X}},$ $(1-t) x_1+ tx_2 \in F $ implies that $x_1, x_2 \in F,$ where $0 < t< 1.$ The dimension $dim(F)$ of the face $F$ is defined as the dimension of the subspace generated by the differences $v-w$ of vectors $v, w \in F.$ If $dim(F)=i,$ then $F$ is called an \emph{$i$-face} of $B_{\mathbb{X}}.$ $(n-1)$- faces of $B_{\mathbb{X}}$ are called \emph{facets} of $B_{\mathbb{X}}$ and $1$-faces of $B_{\mathbb{X}}$ are called \emph{edges} of $B_{\mathbb{X}}.$ 
	\end{definition}

 Next we characterize $k$-smoothness of an element in a finite-dimensional polyhedral Banach space which extends \cite[Th. 2.1]{DMP}. 

\begin{theorem}\label{interior}
	Let $\mathbb{X}$ be an $n$-dimensional polyhedral Banach space and let $x \in S_{\mathbb{X}}.$ Then  $x$ is  $k$-smooth  if and only if $x$ is an interior point of an $(n-k)$-face of $B_{\mathbb{X}}.$ 
\end{theorem}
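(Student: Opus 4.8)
The plan is to pass to the facet description of the polytope $B_{\mathbb{X}}$ and to match the codimension of the face carrying $x$ against the dimension of $\mathrm{span}\,J(x)$. Since $\mathrm{Ext}(B_{\mathbb{X}})$ is finite, $B_{\mathbb{X}}$ is a polytope, so by Minkowski--Weyl it admits a representation $B_{\mathbb{X}}=\bigcap_{i=1}^{m}\{y\in\mathbb{X}:f_i(y)\le 1\}$, where $f_1,\dots,f_m\in S_{\mathbb{X}^*}$ are the finitely many functionals whose level hyperplanes $\{f_i=1\}$ support the facets of $B_{\mathbb{X}}$. For $x\in S_{\mathbb{X}}$ I would set $I(x)=\{i:f_i(x)=1\}$ and $U=\bigcap_{i\in I(x)}\ker f_i$, and let $F$ be the unique face of $B_{\mathbb{X}}$ whose relative interior contains $x$. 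The whole theorem will follow once I establish the single identity $\dim\mathrm{span}\,J(x)+\dim F=n$, since then $x$ is $k$-smooth, i.e.\ $\dim\mathrm{span}\,J(x)=k$, exactly when $\dim F=n-k$; uniqueness of $F$ then handles both directions of the ``interior point of an $(n-k)$-face'' formulation.

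First I would identify $F$ with the exposed face $G:=\{y\in B_{\mathbb{X}}:f_i(y)=1\ \text{for all }i\in I(x)\}$ and compute its dimension. The inclusion $\mathrm{span}(G-G)\subseteq U$ is immediate, because every $f_i$ with $i\in I(x)$ is constant on $G$. For the reverse inclusion I would run a local filling argument: for $u\in U$ and small $\varepsilon>0$ one has $f_i(x+\varepsilon u)=1$ for $i\in I(x)$ and $f_i(x+\varepsilon u)<1$ for $i\notin I(x)$, the latter because $f_i(x)<1$ strictly and there are only finitely many such $i$; hence $x+\varepsilon u\in G$. Thus $x+U$ lies locally in $G$, giving $\mathrm{span}(G-G)=U$ and $x\in\mathrm{ri}(G)$, and by uniqueness of the face containing $x$ in its relative interior, $G=F$. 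In particular $\dim F=\dim U=n-\dim\mathrm{span}\{f_i:i\in I(x)\}$.

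Next I would show $\mathrm{span}\,J(x)=\mathrm{span}\{f_i:i\in I(x)\}=U^{\perp}$. Each $f_i$ with $i\in I(x)$ satisfies $f_i(x)=1$ and $\|f_i\|=1$, hence lies in $J(x)$, which gives one inclusion. Conversely, any $f\in J(x)$ attains its maximum over $B_{\mathbb{X}}$ at the relative interior point $x$ of $F$; writing $x$ as a proper convex combination of points of $F$ forces $f\equiv 1$ on $F$, so $f$ annihilates $\mathrm{span}(F-F)=U$ and therefore $f\in U^{\perp}=\mathrm{span}\{f_i:i\in I(x)\}$. Combining the two paragraphs yields $\dim\mathrm{span}\,J(x)=\dim U^{\perp}=n-\dim U=n-\dim F$, which is the desired identity.

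I expect the main obstacle to be the geometric bookkeeping of the middle step rather than any deep analysis: one must pass cleanly from the ``finitely many extreme points'' definition to a half-space representation, verify that $G$ is genuinely the minimal face through $x$ (equivalently, that the relative interior of $F$ fills a full neighborhood of $x$ inside $x+U$), and invoke uniqueness of the face whose relative interior contains a prescribed point. Once these polytope facts are in place, the passage between $\dim F$ and $\dim\mathrm{span}\,J(x)$ is pure linear algebra through the annihilator relation $U^{\perp}=\mathrm{span}\{f_i:i\in I(x)\}$.
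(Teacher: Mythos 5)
Your proposal is correct. The underlying idea is the same as the paper's -- both proofs come down to identifying $\mathrm{span}\,J(x)$ with the span of the facet-defining functionals active at $x$ and matching that count against the codimension of the face containing $x$ in its relative interior -- but the execution is genuinely different. The paper starts from $Ext(J(x))\subseteq Ext(B_{\mathbb{X}^*})$, invokes the cited correspondence between extreme functionals and facets, asserts that the intersection of the $k$ corresponding facets is an $(n-k)$-face, and then proves $x$ lies in its interior by a contradiction argument (a smaller face would produce a $(k+1)$-st independent support functional); the sufficiency direction is left as ``similar arguments.'' You instead pass to a half-space representation $B_{\mathbb{X}}=\bigcap_i\{f_i\le 1\}$, show that the minimal face through $x$ is exactly $G=\{y\in B_{\mathbb{X}}:f_i(y)=1,\ i\in I(x)\}$ with direction space $U=\bigcap_{i\in I(x)}\ker f_i$, and prove the single identity $\mathrm{span}\,J(x)=U^{\perp}$, from which $\dim\mathrm{span}\,J(x)+\dim F=n$ and hence both implications follow at once. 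What your route buys is self-containedness (no appeal to the external facet--extreme-functional lemma), an explicit treatment of the sufficiency direction, and avoidance of the paper's slightly delicate unsupported claim that an intersection of $k$ facets with linearly independent defining functionals is automatically an $(n-k)$-face -- your local filling argument $x+\varepsilon U\subseteq G$ establishes the dimension directly. What it costs is that you must import (or prove) two standard polytope facts: that the unit ball of a polyhedral space admits a facet representation with $\|f_i\|=1$, and that the relative interiors of the faces partition the boundary, so that the face whose relative interior contains $x$ is unique; both are routine, and your argument that any $f\in J(x)$ is identically $1$ on $F$ (via writing $x$ as a proper convex combination inside $F$) correctly supplies the inclusion $\mathrm{span}\,J(x)\subseteq U^{\perp}$ that closes the identity.
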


\begin{proof}
	We first observe that for each $x \in S_{\mathbb{X}},$ $J(x)$ is convex closed subset of $B_{\mathbb{X}^*}.$  It is easy to observe that $J(x)$ is a face of $B_{\mathbb{X}^*}$, for if 
  $(1-t)x_1^* + t x_2^* \in J(x),$ where $x_1^*, x_2^* \in S_{\mathbb{X}^*}$ then $((1-t)x_1^* + t x_2^*)x=1 \implies x_1^*(x)= x_2^*(x)=1 \implies x_1^*, x_2^* \in J(x).$ So, $Ext(J(x)) \subset Ext(B_{\mathbb{X}^*}).$ 
  	We now prove the necessary part of the theorem.
  	Since  $x$ is  $k$-smooth, $dim~span(Ext(J(x)))=k.$ Let   $f_1, f_2, \ldots, f_k$  be $k$ linearly  independent elements  from  $Ext(J(x)) \subset Ext(B_{\mathbb{X}^*}).$   Let $F_1, F_2, \ldots, F_k$ be the  facets of $B_{\mathbb{X}}$ corresponding to the extreme  functionals  $f_1,f_2, \ldots, f_k$ respectively, see \cite[Lemma  2.1]{SKBS}.  Then $ \cap_{i=1}^k F_i \neq \emptyset,$ as $x \in F_i,$ for $1 \leq i \leq k.$ Clearly, $F=\cap_{i=1}^k F_i$ is a $(n-k)$-face of $B_{\mathbb{X}}.$ We claim that $x \in int(F).$ If not, then  $x \in G,$ where $G$ is a $(n-k-1)$-face of $B_{\mathbb{X}}.$  So there exists a facet $F_{k+1} \not\in \{ \pm F_1, \pm F_2, \ldots, \pm  F_{k}\}$ of $B_{\mathbb{X}}$ such that $G = \cap_{i=1}^{k+1} F_i.$   Let $f_{k+1} $ be the extreme functional corresponding to $F_{k+1}.$  Using \cite[Lemma ~2.1]{SKBS} it is clear that  $f_1, f_2, \ldots, f_{k+1} \in Ext(B_{\mathbb{X}^*})$  are linearly independent. Moreover,  $f_i(x)=1,$ for any $1 \leq i \leq k+1,$ which  shows the existence of $(k+1)$-number of linearly independent support functionals at $x$. This contradicts  the fact that $x$ is a $k$-smooth point. This completes the necessary part of the theorem. The sufficient part follows from similar arguments as given in the proof of  the necessary part.
\end{proof}

We next discuss a step-by-step method to find the order of smoothness of an operator between finite-dimensional polyhedral  spaces.\\

\noindent \textbf{Problem} 	Let $\mathbb{X}, \mathbb{Y}$ be  finite-dimensional polyhedral Banach spaces and let $T \in S_{\mathbb{L}(\mathbb{X}, \mathbb{Y})}.$ Determine the order of smoothness of $T.$ \\

Using the Theorem \ref{order}  we will solve the above problem in the following three steps: \\

\textbf{Step~1:} In the first step we determine $M_T \cap Ext(B_{\mathbb{X}}).$ Let $Ext(B_{\mathbb{X}})= \{ \pm x_1, \pm x_2, $ $ \ldots, \pm x_r\}$ and we determine $Tx_i,$ for any $1 \leq i \leq r$ to check that for which $x_i$'s $\|Tx_i\|= \|T\|.$ Assume that $M_T \cap Ext(B_{\mathbb{X}})= \{ \pm x_1, \pm x_2, \ldots, \pm x_t\}$  and   $\{ x_1, x_2, \ldots, x_k\}$ is a maximal linearly independent set of $M_T \cap Ext(B_{\mathbb{X}}).$ \\

\textbf{Step~2:} Let $Tx_i= y_i,$ for any $1 \leq i \leq t$ and let $y_i$ be $m_i$-smooth, for any $1 \leq i \leq t.$   Suppose that $ \{ y^*_{i1}, y^*_{i2}, \ldots, y^*_{im_i} \} $ is a linearly independent set in $Ext(J(y_i)),$ for any $1 \leq i \leq t.$ It should be noted that as $J(y_i)$ is a face of $B_{\mathbb{Y}^*},$ $\{ y^*_{i1}, y^*_{i2}, \ldots, y^*_{im_i} \} \subset Ext(B_{\mathbb{Y}^*}).$ So, to find a linearly independent set of $Ext(J(y_i)),$ we just need to determine the linearly independent elements $y^*_{ij}$ of $Ext(B_{\mathbb{Y}^*})$ such that  $y^*_{ij}(y_i)= 1.$
 Let $\{ y^*_1, y^*_2, \ldots, y^*_s\}$ be a maximal linearly independent set of $\{ y^* \in S_{\mathbb{Y}^*}: y^* \in J(y_i), 1 \leq i \leq t\}.$ \\
 
 \textbf{Step~3:} Suppose that $x_j= \sum_{i=1}^{k} \alpha_{ij} x_i$   and $y^*_{ij} = \sum_{l=1}^{s} \beta_{l}^{ij} y^*_{l},$  for each $i, \, 1 \leq i \leq k$ and $j, \, 1 \leq j \leq t. $ 
  Then $  i_{M_T}(T) = dim~span Z,$ where $Z= \{ (( \alpha_{qi} \beta^{ql}_j))_{ 1 \leq j \leq s, 1 \leq i \leq k} \in \mathbb{K}^{ks}: 1  \leq l \leq m_i, 1 \leq q \leq t \}.$
      Since $Z \in \mathbb{K}^{ks}$ is finite, it is straightforward to calculate $dim~span~Z,$ and from Theorem \ref{order}, we determine the index of smoothness of $T.$ Moreover, note that  $\{x_1, x_2, \ldots, x_k\}$ is linearly independent and so using Proposition \ref{prop1},  we conclude that $\{ (( \alpha_{qi} \beta^{ql}_j))_{ 1 \leq j \leq s, 1 \leq i \leq k} \in \mathbb{K}^{ks}: 1  \leq l \leq m_i, 1 \leq q \leq k \}$ is a linearly independent set of $Z.$ Thus $i_{M_T}(T) \geq \sum_{i=1}^k m_i.$ \\

Let us now discuss the algorithm given above through an explicit example, to further illustrate its advantage from a computational point of view.

\begin{example}
	Let $\mathbb{X}$ be a $3$-dimensional polyhedral Banach space such that $Ext(B_{\mathbb{X}})= \{ \pm (1, 0, 0), \pm (\frac{1}{\sqrt{2}}, \frac{1}{\sqrt{2}}, 0), \pm (0, 1, 0), \pm (- \frac{1}{\sqrt{2}}, \frac{1}{\sqrt{2}}, 0), \pm(0, 0,1)\}$ and let $\mathbb{Y}= \ell_{\infty}^3.$ Consider  $T \in \mathbb{L}(\mathbb{X}, \mathbb{Y})$   such that  
	\[T(v_1, v_2, v_3)= (v_1+ (\sqrt{2}-1) v_2+ v_3, (\sqrt{2}-1)v_1+ v_2 -v_3, v_1+v_3).\]
	Then  $$M_T \cap Ext(B_{\mathbb{X}})= \{ \pm (1,0,0), \pm (0,1,0), \pm (\frac{1}{\sqrt{2}}, \frac{1}{\sqrt{2}}, 0), \pm (0,0,1)\}.$$ 
	Clearly, $\|T\|=1.$ Let $Z$ be the set defined in the Step $3$ of the above problem. Let $\psi$ be the isometric isomorphism between $(\ell_{\infty}^3)^*$ and $\ell_{1}^3.$ Let $e^*_i \in S_{\mathbb{Y}^*}$ such that $\psi(e^*_i)=e_i,$ where $\{e_1, e_2, e_3\}$ is the standard ordered basis of $\ell_{1}^3.$
	Then clearly, $Ext(B_{\mathbb{Y}^*})= \{ \pm e^*_1, \pm e^*_2, \pm e^*_3\}$  and $\{e^*_1, e^*_2, e^*_3 \}$ is a basis of $\{ f \in J(T(v)): v \in M_T \cap Ext(B_{\mathbb{X}}) \}.$ Then $Z= \{ \widetilde{e_{11}}, \widetilde{e_{13}}, \widetilde{e_{22}}, \widetilde{e_{31}}, -\widetilde{e_{32}}, \widetilde{e_{33}}, \frac{1}{\sqrt{2}} \widetilde{e_{11}}+ \frac{1}{\sqrt{2}} \widetilde{e_{21}}, \frac{1}{\sqrt{2}} \widetilde{e_{11}}+ \frac{1}{\sqrt{2}} \widetilde{e_{22}} \}.$  Clearly, $dim\,Z=7,$ implies that index of smoothness of $T$ with respect to $M_T$ is $7,$ therefore, $T$ is $7$-smooth.
\end{example}

\begin{remark}
	Let $ T\in \mathbb{L}(\ell_1^n,\mathbb{Y}),\|T\|=1.$  In \cite[Cor. 2.3]{MP} it is proved that $T$ is $k-$smooth if and only if $M_T\cap Ext(B_{\ell_1^n})=\{\pm x_1,\pm x_2,$ $\ldots,\pm x_r\}$ for some $1\leq r\leq n,$ $Tx_i$ is $m_i-$smooth for each $1\leq i\leq r$ and $m_1+m_2+\ldots+m_r=k.$  This result now follows easily from the above Problem, since $M_T \cap Ext(B_{\ell_1^n})$ is clearly a linearly independent set 
\end{remark}

The next two results together characterize the order of smoothness of a particular class of operators between polyhedral Banach spaces, which includes the $rank~1$ operators.


\begin{prop}
   	Let $\mathbb{X}, \mathbb{Y}$ be  finite-dimensional polyhedral Banach spaces  such that $\dim ~\mathbb{X}=n, \dim ~\mathbb{Y}=m.$ Let $T \in S_{\mathbb{L}(\mathbb{X}, \mathbb{Y})}$ be such that $M_T= \pm F$ and $|T(M_T)|= 2,$ where $F$ is a face of $B_{\mathbb{X}}.$ If $T$ is $k$-smooth then $ k \in \{ pq: 1 \leq p \leq n, 1 \leq q \leq m\}.$ 
\end{prop}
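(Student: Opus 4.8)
The plan is to reduce everything to Lemma \ref{wojcik}, which applies here: since $\mathbb{X},\mathbb{Y}$ are finite-dimensional, $\mathbb{X}$ is reflexive, $\mathbb{K}(\mathbb{X},\mathbb{Y})=\mathbb{L}(\mathbb{X},\mathbb{Y})$ is trivially an $M$-ideal in itself, and $dist(T,\mathbb{K}(\mathbb{X},\mathbb{Y}))=0<1$. The first step is to pin down the image $T(F)$. Since $F$ is convex and $T$ is linear, $T(F)$ is a convex subset of $\mathbb{Y}$ consisting of norm-one vectors (every $x\in F\subset M_T$ satisfies $\|Tx\|=\|T\|=1$). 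If $T(F)$ had two distinct points it would contain the whole segment joining them, hence be infinite, and then $T(M_T)=T(F)\cup(-T(F))$ would be infinite, contradicting $|T(M_T)|=2$. Thus $T(F)=\{y\}$ for a single $y\in S_{\mathbb{Y}}$ and $T(M_T)=\{y,-y\}$.

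Next I would record the relevant finite data. Writing $v_1,\dots,v_N$ for the vertices of the polytopal face $F$, we have $M_T\cap Ext(B_{\mathbb{X}})=\{\pm v_1,\dots,\pm v_N\}$ and $F=\mathrm{conv}\{v_1,\dots,v_N\}$, so $span\{v_1,\dots,v_N\}=span\,F$; set $p:=\dim span\,F$ and fix $p$ linearly independent vertices $x_1,\dots,x_p$ among the $v_i$, noting $1\le p\le n$. Put $q:=\dim span\,J(y)$, the order of smoothness of $y$; since $J(y)\neq\emptyset$ and $span\,J(y)\subseteq\mathbb{Y}^*$ we have $1\le q\le m$. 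As $J(y)$ is compact and convex, $span\,Ext(J(y))=span\,J(y)$, so I can fix $q$ linearly independent functionals $y_1^*,\dots,y_q^*\in Ext(J(y))$ forming a basis of $span\,J(y)$.

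Now I would compute $\dim span\,Ext(J(T))$, which equals $k$ because $J(T)$ is compact and convex. By Lemma \ref{wojcik}, $Ext(J(T))=\{y^*\otimes x: x\in M_T\cap Ext(B_{\mathbb{X}}),\, y^*\in Ext(J(Tx))\}$. Since $Tx\in\{y,-y\}$, $Ext(J(-y))=-Ext(J(y))$, and $(-y^*)\otimes(-v_i)=y^*\otimes v_i$, the span of $Ext(J(T))$ equals $span\{y^*\otimes v_i: y^*\in Ext(J(y)),\,1\le i\le N\}$. Expanding each $v_i$ in the basis $x_1,\dots,x_p$ and each $y^*\in Ext(J(y))$ in the basis $y_1^*,\dots,y_q^*$ shows this span is contained in $span\{y_j^*\otimes x_l:1\le j\le q,\,1\le l\le p\}$; the reverse inclusion is immediate, since each $x_l$ is one of the $v_i$ and each $y_j^*$ lies in $Ext(J(y))$. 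By Proposition \ref{prop1} the set $\{y_j^*\otimes x_l\}$ is linearly independent, whence $\dim span\,Ext(J(T))=pq$. Therefore $k=pq$ with $1\le p\le n$ and $1\le q\le m$, which gives the claim.

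The only genuinely delicate points I expect are the reduction $T(F)=\{y\}$ (the convexity-plus-finiteness argument above) and the clean collapse of the tensor span onto a product basis; once those are in place, the count $k=pq$ is forced and the argument runs exactly parallel to the sufficiency part of Theorem \ref{rank1}.
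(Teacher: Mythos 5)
Your proof is correct and follows essentially the same route as the paper's: identify $T(M_T)=\{\pm y\}$, take a maximal linearly independent set of $p$ extreme points of $F$ and $q$ linearly independent extreme support functionals of $y$, and use Lemma \ref{wojcik} together with Proposition \ref{prop1} to conclude that the $pq$ tensor products span $Ext(J(T))$. The only difference is that you spell out the details (the singleton reduction of $T(F)$ and the collapse of the tensor span) that the paper dismisses with ``it can be easily verified.''
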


	\begin{proof}
		Suppose that $T(M_T)= \{ \pm w\}$ and $M_T= \pm F,$ where $F$ is a face of $B_{\mathbb{X}}$ and $w \in S_{\mathbb{Y}}.$ Let us assume that   $\{x_1, x_2, \ldots, x_p\}$ is a maximal linearly independent set of $F \cap Ext(B_{\mathbb{X}}) $ and let $w $ be a $q$-smooth point, where $1 \leq p \leq n, 1 \leq q \leq m.$ Suppose that $y^*_1, y^*_2, \ldots, y^*_q$ are  linearly independent support functionals of $w.$  Then using Lemma \ref{wojcik}, it can be easily verified that  $\{ y^*_i \otimes x_j: 1 \leq i \leq q, 1 \leq j \leq p\}$ is a maximal linearly independent subset of $Ext(J(T)).$ Therefore, 
		$T$ is $pq$-smooth.
	\end{proof}

   	\begin{theorem}\label{face}
   	Let $\mathbb{X}, \mathbb{Y}$ be  finite-dimensional polyhedral Banach spaces such that $\dim ~\mathbb{X}=n, \dim ~\mathbb{Y}=m.$ Let $ k \in \{ pq: 1 \leq p \leq n, 1 \leq q \leq m\}.$ Then  there exists  a $k$-smooth operator $T \in S_{\mathbb{L}(\mathbb{X}, \mathbb{Y})}$  such that $M_T= \pm F$ and $|T(M_T)|= 2,$ where $F$ is a face of $B_{\mathbb{X}}.$
   \end{theorem}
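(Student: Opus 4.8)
The plan is to realize the required operator as a rank one map $Tx = f(x)\,w$, where $f$ exposes a suitable face $F$ of $B_{\mathbb{X}}$ and $w$ is a suitably chosen $q$-smooth point of $S_{\mathbb{Y}}$; the $k$-smoothness then follows directly from the preceding Proposition. So fix a factorization $k = pq$ with $1 \leq p \leq n$ and $1 \leq q \leq m$, which is available by hypothesis.

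First I would select the face. Since $B_{\mathbb{X}}$ is a full-dimensional polytope, its face lattice contains a face of every dimension $0, 1, \ldots, n-1$; choose a face $F$ of $B_{\mathbb{X}}$ with $dim\,F = p-1$. As every face of a polytope is exposed (indeed, writing $F = F_1 \cap \cdots \cap F_j$ as an intersection of facets with exposing functionals $f_1, \ldots, f_j$, the average $f = \tfrac{1}{j}\sum_{i} f_i$ satisfies $f(x) \leq 1$ on $B_{\mathbb{X}}$ with equality exactly on $F$, hence $f \in S_{\mathbb{X}^*}$), there is $f \in S_{\mathbb{X}^*}$ with $f(x) = 1$ for every $x \in F$ and $|f(x)| < 1$ for every $x \in B_{\mathbb{X}} \setminus (\pm F)$; by symmetry of $B_{\mathbb{X}}$, $f(x) = -1$ exactly on $-F$. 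A crucial point is that $dim\,span(F \cap Ext(B_{\mathbb{X}})) = dim\,F + 1 = p$: the extreme points of $B_{\mathbb{X}}$ lying in $F$ affinely span the $(p-1)$-dimensional face $F$, and since $F \subset S_{\mathbb{X}}$ the affine hull of $F$ avoids the origin, so its linear span has dimension one larger. Hence a maximal linearly independent subset of $F \cap Ext(B_{\mathbb{X}})$ has exactly $p$ elements.

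Next I would choose the target point. By Theorem \ref{interior}, a point of $S_{\mathbb{Y}}$ is $q$-smooth if and only if it is an interior point of an $(m-q)$-face of $B_{\mathbb{Y}}$; since $B_{\mathbb{Y}}$ has faces of every dimension $0, \ldots, m-1$, I pick an interior point $w$ of some $(m-q)$-face, so that $w \in S_{\mathbb{Y}}$ is $q$-smooth. Then I define $T \in \mathbb{L}(\mathbb{X}, \mathbb{Y})$ by $Tx = f(x)\,w$. One computes $\|Tx\| = |f(x)|\,\|w\| = |f(x)| \leq 1$, with equality precisely when $x \in \pm F$; thus $\|T\| = 1$ and $M_T = \pm F$. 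Moreover $T(F) = \{w\}$ and $T(-F) = \{-w\}$, so $T(M_T) = \{\pm w\}$ and $|T(M_T)| = 2$. Therefore $T$ meets every hypothesis of the preceding Proposition, and the argument proving that Proposition shows that such a $T$ is $pq$-smooth, the two factors being the size $p$ of a maximal linearly independent subset of $F \cap Ext(B_{\mathbb{X}})$ and the order of smoothness $q$ of $w$. By our choices this is exactly $pq = k$, which finishes the proof.

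The routine verifications (the norm, the norm-attainment set, and the image of $F$) are immediate from the defining properties of $f$ and $w$. The substantive ingredients are the two standard facts from polytope theory, namely that an $n$-dimensional polytope has faces of every dimension $0 \leq d \leq n-1$ and that every such face is exposed, which I would invoke without extended proof. The one step demanding genuine care is the identity $dim\,span(F \cap Ext(B_{\mathbb{X}})) = dim\,F + 1$, which rests on the fact that the origin does not lie in the affine hull of $F$; this is precisely what converts the chosen face dimension into the multiplicity $p$ appearing in the preceding Proposition, and it is the step I would present most carefully.
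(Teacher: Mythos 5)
Your construction is correct and is essentially the paper's own proof: the operator the paper defines basis-wise, by $Tx_i=u$ for $1\leq i\leq p$ and $T\widehat{x}_j=0$ with the complementary basis vectors chosen in $\ker f$, is exactly your rank-one map $Tx=f(x)\,w$, and both arguments then feed the same two ingredients (a $(p-1)$-face of $B_{\mathbb{X}}$ with its exposing functional, and a $q$-smooth point of $S_{\mathbb{Y}}$ obtained from Theorem \ref{interior}) into the counting argument of the preceding proposition to get $pq$-smoothness. Your write-up is in fact slightly more careful at two points the paper leaves implicit -- the averaging argument showing every face of a polytope is exposed (the paper asserts the existence of $f$ ``from the definition of a face''), and the affine-hull argument that $\dim\,\mathrm{span}(F\cap Ext(B_{\mathbb{X}}))=\dim F+1=p$, which the closed-form $\|Tx\|=|f(x)|$ also lets you use to verify $M_T=\pm F$ without the paper's coefficient computations $\sum_{j=1}^{p}c_{tj}=1$ and $\sum_{i=1}^{p}c_i<1$.
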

   
   \begin{proof}
   Let $k=p q,$ for some $1 \leq p \leq n, 1 \leq q \leq m.$ We choose a $(p-1)$-face $F$ of $B_{\mathbb{X}}$ and choose $u \in S_{\mathbb{Y}}$ such that $u$ is  $q$-smooth.   Then $dim~span ~J(u)=q.$ Let $y_1^*, y_2^*, \ldots, y_q^* \in Ext \, J(u)$ be such that $ span \, J(u) = span \, \{y_1^*, y_2^*, \ldots, y_q^*  \}. $ Suppose that $F \cap Ext(B_{\mathbb{X}})= \{ x_1, x_2, \ldots, x_r\}.$ It is easy to observe that the  cardinality  of a maximal linearly independent set in  $F \cap Ext(B_{\mathbb{X}})$ is $p.$ Without loss of generality we assume that $ \{x_1, x_2, \ldots, x_{p}\}$ is a linearly independent set in $F \cap Ext(B_{\mathbb{X}}).$ For each  $t,\, p+1 \leq t \leq r,$ let  $x_t= \sum_{j=1}^{p} c_{tj} x_j.$ We claim that $\sum_{j=1}^{p} c_{tj}=1.$ Suppose that $f \in S_{\mathbb{X}^*}$ is such that $f(x)=1,$ for each $x \in F$ and $f(z)<1,$ for any $z \in S_{\mathbb{X}}\setminus F.$ Note that the existence of such a functional is guaranteed from the definition of a face.  Now, $1=f(x_{t})= f( \sum_{j=1}^{p} c_{tj} x_j)= \sum_{j=1}^{p} c_{tj} f(x_j)= \sum_{j=1}^{p} c_{tj},$ for each $t, \, p+1 \leq t \leq r.$
    Let $\mathcal{B}$ be a basis of $\mathbb{X}$ such that $x_j \in \mathcal{B}= \{ x_1, x_2, \ldots, x_{p}, \widehat{x}_{p+1}, \ldots, \widehat{x}_n\},$ for any $j \in \{ 1, 2, \ldots, p\}, $ where $\mathcal{B} \setminus \{ x_1, x_2, \ldots, x_{p}\} \in ker~f.$ Take  $\widetilde{x} \in Ext(B_{\mathbb{X}}) \setminus \{\pm F\}$ and let $\widetilde{x}= \sum_{i=1}^{p} c_i x_i+ \sum_{j=p+1}^{n} c_j \widehat{x}_j,$ where $c_i \in \mathbb{R}.$  Then 
   \[ 1 > f(\widetilde{x})= f\bigg(\sum_{i=1}^{p} c_i x_i+ \sum_{j=p+1}^{n} c_j \widehat{x}_j \bigg)= \sum_{i=1}^{p} c_i f(x_i) + \sum_{j=p+1}^{n} c_j f(\widetilde{x}_j) = \sum_{i=1}^{p} c_i .\]
   So, for any $\widetilde{x} \in Ext(B_{\mathbb{X}}) \setminus \{\pm F\},$ if $\widetilde{x}= \sum_{i=1}^{p} c_i x_i+ \sum_{j=p+1}^{n} c_j \widehat{x}_j$ then $\sum_{i=1}^{p} c_i < 1.$
   Consider  $T \in \mathbb{L}(\mathbb{X}, \mathbb{Y})$ such that $Tx_i=u,$ for any $1 \leq i \leq p$ and $T\widehat{x}_i=0,$ for any $ p+1 \leq i \leq n.$ Observe that for any $1 \leq t \leq r,$ $ \|Tx_t\|= \|\sum_{j=1}^{p} c_{tj} Tx_j\|= \|\sum_{j=1}^{p} c_{tj} u\|= |\sum_{j=1}^{p} c_{tj}|\|u\|=1,$ implies that $ \pm F \subset M_T.$
    Now for any $\widetilde{x} \in Ext(B_{\mathbb{X}}) \setminus \{\pm F\}$ with $\widetilde{x}= \sum_{i=1}^{p} c_i x_i+ \sum_{j=p+1}^{n} c_j \widehat{x}_j, $ it is clear that  $ \| T\widetilde{x}\| = \| T(\sum_{i=1}^{p} c_i x_i+ \sum_{j=p+1}^{n} c_j \widehat{x}_j)\| = \|T(\sum_{i=1}^{p} c_i x_i)\|=\| \sum_{i=1}^{p} c_i u\| \leq | \sum_{i=1}^{p} c_i|< 1.$  This implies $\| T\|=1$ and $M_T= \pm F$. Clearly, $T(M_T) \cap S_{\mathbb{Y}}= \{\pm u\}.$
     Then from Lemma \ref{wojcik}, $Ext(J(T))= \{ y_j^* \otimes x_i: 1 \leq i \leq t, 1 \leq j \leq q\}, $ and by similar arguments given in the proof of Theorem \ref{rank1} we easily obtain that $T$ is $pq$-smooth. This completes the theorem.
   \end{proof}
   
   	\begin{theorem}\label{position:rank1}
   	Let $\mathbb{X}, \mathbb{Y}$ be finite-dimensional polyhedral Banach spaces such that $\dim ~\mathbb{X}=n, \dim ~\mathbb{Y}=m.$ Then there exists a $rank~1$ operator $T \in S_{\mathbb{L}(\mathbb{X}, \mathbb{Y})}$ which is $k$-smooth if and only if $k \in \{ pq: 1 \leq p \leq n, 1 \leq q \leq m\}.$
   	\end{theorem}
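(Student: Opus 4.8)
The plan is to recognise that a norm-one rank one operator is exactly an operator of the type studied in the preceding Proposition and in Theorem \ref{face}, namely one with $M_T=\pm F$ for a face $F$ and $|T(M_T)|=2$. Once this identification is made, the asserted equivalence splits cleanly into the two implications supplied by those two results. First I would record the standard fact that every rank one $T\in S_{\mathbb{L}(\mathbb{X},\mathbb{Y})}$ has the form $Tx=f(x)\,w$ with $f\in S_{\mathbb{X}^*}$ and $w\in S_{\mathbb{Y}}$, since $\|T\|=\|f\|\,\|w\|$ after the obvious normalisation.

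For the forward implication, assume a rank one $T\in S_{\mathbb{L}(\mathbb{X},\mathbb{Y})}$ is $k$-smooth and write $Tx=f(x)\,w$ as above. I would verify that $F:=\{x\in S_{\mathbb{X}}:f(x)=1\}$ is a nonempty face of $B_{\mathbb{X}}$: it is nonempty because $\|f\|=1$ is attained in the finite-dimensional space $\mathbb{X}$, and it is a face because $(1-t)x_1+tx_2\in F$ with $x_1,x_2\in S_{\mathbb{X}}$, $0<t<1$, forces $(1-t)f(x_1)+tf(x_2)=1$, whence $f(x_1)=f(x_2)=1$ as each $f(x_i)\leq 1$. Consequently $M_T=\{x\in S_{\mathbb{X}}:|f(x)|=1\}=F\cup(-F)=\pm F$, and since $Tx=w$ on $F$ while $Tx=-w$ on $-F$, we have $T(M_T)=\{\pm w\}$, so $|T(M_T)|=2$. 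Thus $T$ meets the hypotheses of the preceding Proposition, which immediately gives $k\in\{pq:1\leq p\leq n,\,1\leq q\leq m\}$.

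For the converse, suppose $k\in\{pq:1\leq p\leq n,\,1\leq q\leq m\}$. Theorem \ref{face} already produces a $k$-smooth operator $T\in S_{\mathbb{L}(\mathbb{X},\mathbb{Y})}$ with $M_T=\pm F$ and $|T(M_T)|=2$; the only thing left to note is that this operator is of rank one. Indeed, in that construction $T$ maps the face-basis vectors $x_1,\dots,x_p$ to the single vector $u$ and annihilates the complementary basis vectors $\widehat{x}_{p+1},\dots,\widehat{x}_n$, so $T(\mathbb{X})=span\{u\}$ and hence $T$ has rank one. This furnishes the required rank one $k$-smooth operator and closes the equivalence.

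The essential analytic content is entirely inherited from the preceding Proposition and from Theorem \ref{face}; the new contribution is the bookkeeping that pins rank one norm-one operators to the geometric class $\{M_T=\pm F,\ |T(M_T)|=2\}$. I expect the only (and mild) obstacle to be the careful verification that $M_T$ equals $\pm F$ for a genuine face $F$ and that its image reduces to the two points $\pm w$; this in turn rests on the rank one representation together with the inequality $f(x)\leq 1$ being strict off $F$. No reflexivity or $M$-ideal difficulty arises, since in finite dimensions $\mathbb{K}(\mathbb{X},\mathbb{Y})=\mathbb{L}(\mathbb{X},\mathbb{Y})$ is trivially an $M$-ideal in itself with $dist(T,\mathbb{K}(\mathbb{X},\mathbb{Y}))=0<1$.
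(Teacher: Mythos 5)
Your proposal is correct and follows essentially the same route as the paper: both arguments reduce the statement to the preceding Proposition and Theorem \ref{face} by showing that a norm-one rank one operator has $M_T=\pm F$ for a face $F$ with $|T(M_T)|=2$, and that the operator constructed in Theorem \ref{face} is itself of rank one. The only cosmetic difference is that you verify the face structure via the representation $Tx=f(x)w$, whereas the paper argues directly that $S_1=\{x\in M_T : Tx=w\}$ is a face using the rank one condition; the two verifications are interchangeable.
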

   
   \begin{proof}
   	As $rank~T=1,$ we assume that  $ T(M_T)= \{ \pm w\},$ for some $w \in S_{\mathbb{Y}}.$ Now take $S_1= \{ x \in M_T: T(x)=w\}$ and $S_2= \{ x \in M_T: T(x)=-w\}.$ Clearly, $S_2= -S_1.$	It is easy to observe that $S_1$ and $S_2$ are convex. Let $(1-t) x_1 + t x_2 \in S_1, $ where $x_1, x_2 \in S_{\mathbb{X}}.$ Then $ T((1-t) x_1 + t x_2)=w \implies (1-t) Tx_1 + t Tx_2 =w \implies Tx_1 = Tx_2 =w,$ as $rank~T=1.$ This implies that $x_1, x_2 \in S_1$ and consequently, $S_1$ is a face of $B_{\mathbb{X}}.$ So, $M_T= \pm S_1.$ Therefore, from Theorem \ref{face}, we obtain the result immediately.
   \end{proof}

  	The significance of Theorem \ref{position:rank1} is  that  the order of smoothness  of a rank one  operator $T$  does not depend on the geometric structure of the spaces within the realm of polyhedral Banach spaces.  The order of smoothness only depends on the dimension of the  underlying spaces.
  	It is now immediate that  for any integer $k$ less than or  equal to $n,$ where $dim ~\mathbb{X}=n, $ there exists a $rank ~1$ unit norm operator in $\mathbb{L}(\mathbb{X}, \mathbb{Y})$ which is $k$-smooth.   Moreover, Theorem \ref{position:rank1}  provides us with an interesting insight on the positions of the $rank ~1$ operators on the unit ball of $\mathbb{L}(\mathbb{X}, \mathbb{Y}).$
  	 The famous Bertrand's Postulate \cite{MM} (though the result is proved it is still popularly known as a postulate!) ensures that for any $n > 1,$ there always exists a prime number $p$ between $n$ and $2n.$ The following geometric observation is immediate from Theorem \ref{position:rank1}, by using  Theorem \ref{interior} and the Bertrand's Postulate.

   \begin{cor}\label{no-k-smooth}
   	Let $\mathbb{X}, \mathbb{Y}$ be  finite-dimensional polyhedral Banach spaces such that $\dim ~\mathbb{X}=n >1, \dim ~\mathbb{Y}=m>1.$   Then
   	\begin{itemize}
   	\item[(i)]  the $rank~1$ unit norm operators are only in the interior of the $(mn-pq)$-faces of $B_{\mathbb{L}(\mathbb{X}, \mathbb{Y})},$ where $1 \leq p \leq n, 1 \leq q \leq m.$
   	\item[(ii)]  	 for each prime number $p$  with $\max\{m,n\} \leq p \leq mn,$ there does not exist any rank $1$  operator  $T$ in  $S_{\mathbb{L}(\mathbb{X}, \mathbb{Y})}$ such that $T$ is $p$-smooth. 
   	\end{itemize}
    \end{cor}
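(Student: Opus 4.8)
The plan is to lift both parts to the operator space $\mathbb{L}(\mathbb{X},\mathbb{Y})$ itself, which is again a finite-dimensional polyhedral Banach space, this time of dimension $\dim\mathbb{X}\cdot\dim\mathbb{Y}=mn$. Polyhedrality here is the one point that should be checked at the outset: the operator norm on $\mathbb{L}(\mathbb{X},\mathbb{Y})$ equals the maximum of the finitely many linear functionals $S\mapsto y^*(Sx)$ taken over $(x,y^*)\in Ext(B_{\mathbb{X}})\times Ext(B_{\mathbb{Y}^*})$, so $B_{\mathbb{L}(\mathbb{X},\mathbb{Y})}$ is an intersection of finitely many half-spaces and hence a polyhedron. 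With this in hand, Theorem \ref{interior} applies verbatim inside $\mathbb{L}(\mathbb{X},\mathbb{Y})$: an operator $T\in S_{\mathbb{L}(\mathbb{X},\mathbb{Y})}$ is $k$-smooth if and only if it is an interior point of an $(mn-k)$-face of $B_{\mathbb{L}(\mathbb{X},\mathbb{Y})}$. The admissible values of $k$ for rank $1$ operators are then read off from Theorem \ref{position:rank1}.

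For (i), take an arbitrary rank $1$ operator $T\in S_{\mathbb{L}(\mathbb{X},\mathbb{Y})}$ and let $k$ be its order of smoothness. Because $T$ witnesses the existence of a rank $1$ $k$-smooth operator, Theorem \ref{position:rank1} forces $k=pq$ for some $1\le p\le n$ and $1\le q\le m$. Feeding this into the face description above, $T$ is an interior point of an $(mn-k)$-face, that is, an $(mn-pq)$-face. Since $T$ was arbitrary, every rank $1$ unit-norm operator lies in the interior of a face whose dimension has the form $mn-pq$, which is precisely (i).

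For (ii), the argument is arithmetic. If some rank $1$ $T\in S_{\mathbb{L}(\mathbb{X},\mathbb{Y})}$ were $p$-smooth, Theorem \ref{position:rank1} would demand a factorization $p=p'q'$ with $1\le p'\le n$ and $1\le q'\le m$; primality of $p$ allows only $\{p',q'\}=\{1,p\}$, and the size constraints then force $p\le n$ or $p\le m$, i.e. $p\le\max\{m,n\}$. Hence, for $p$ above $\max\{m,n\}$, no such operator exists, which is the assertion of (ii). The genuinely substantive step is not this contradiction but the verification that the corollary is non-vacuous, i.e. that primes actually occur in the interval; this is exactly where Bertrand's Postulate \cite{MM} enters, yielding a prime $p$ with $\max\{m,n\}<p<2\max\{m,n\}$, and the elementary bound $2\max\{m,n\}\le mn$ (which holds because $m,n\ge 2$) certifies that this prime lies within $[\max\{m,n\},mn]$. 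I expect the only delicate checks to be the polyhedrality of $\mathbb{L}(\mathbb{X},\mathbb{Y})$ flagged above and the clean translation between order of smoothness $k$ and interior of an $(mn-k)$-face via Theorem \ref{interior}.
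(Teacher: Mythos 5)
Your argument is correct and follows exactly the route the paper intends: polyhedrality of $\mathbb{L}(\mathbb{X},\mathbb{Y})$ (which you rightly verify rather than assume), Theorem \ref{interior} applied inside the $mn$-dimensional operator space, Theorem \ref{position:rank1} for the admissible orders $pq$, and Bertrand's Postulate together with $2\max\{m,n\}\leq mn$ for the non-vacuousness of (ii). The only caveat is that your factorization argument (correctly) excludes primes $p>\max\{m,n\}$ but cannot exclude $p=\max\{m,n\}$ itself, where Theorem \ref{position:rank1} in fact produces a rank $1$ $p$-smooth operator via $p=p\cdot 1$; this is a defect of the corollary's stated left endpoint, not of your proof.
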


Since  $(mn-1) \notin \{ pq, 1 \leq p \leq m, 1 \leq q \leq n\},$ whenever $m,n >1,$ using Theorem \ref{interior}, the following geometric observation is immediate.

\begin{cor}
	Let $\mathbb{X}, \mathbb{Y}$ be  finite-dimensional polyhedral Banach spaces such that $ min\{\dim ~\mathbb{X}, \dim ~\mathbb{Y}\}>1.$ Then there exists no $rank~1$ operator in the interior of any edge of the unit ball of $\mathbb{L}(\mathbb{X}, \mathbb{Y}).$
\end{cor}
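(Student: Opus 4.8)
The plan is to translate the geometric claim into a statement about the order of smoothness and then invoke the classification already obtained. First I would observe that $\mathbb{L}(\mathbb{X}, \mathbb{Y})$ is itself a finite-dimensional polyhedral Banach space, of dimension $mn$, so Theorem \ref{interior} applies to it. An edge of $B_{\mathbb{L}(\mathbb{X},\mathbb{Y})}$ is by definition a $1$-face, and by Theorem \ref{interior} an operator $T$ lying in the interior of a $1$-face is precisely an $(mn-1)$-smooth point of $S_{\mathbb{L}(\mathbb{X},\mathbb{Y})}$. Thus the assertion ``no rank $1$ operator lies in the interior of an edge'' is equivalent to ``no rank $1$ operator is $(mn-1)$-smooth.''

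Next I would appeal to Theorem \ref{position:rank1}, which states that a $k$-smooth rank $1$ operator in $S_{\mathbb{L}(\mathbb{X},\mathbb{Y})}$ exists if and only if $k \in \{pq : 1 \leq p \leq n,\, 1 \leq q \leq m\}$. Taking the contrapositive, it suffices to verify that $mn - 1$ does not belong to this set whenever $m, n > 1$; this is exactly the arithmetic fact announced in the remark preceding the corollary, so the heart of the proof is to confirm it.

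The verification is elementary and I would argue by contradiction. Suppose $mn - 1 = pq$ with $1 \leq p \leq n$ and $1 \leq q \leq m$. If $p = n$, then $q = m - \tfrac{1}{n}$ is not an integer since $n > 1$; symmetrically $q = m$ is impossible. Hence $p \leq n-1$ and $q \leq m-1$, so that $pq \leq (n-1)(m-1) = mn - (m+n) + 1$. Combined with $pq = mn - 1$, this forces $m + n \leq 2$, contradicting $m, n > 1$. Therefore $mn - 1 \notin \{pq : 1 \leq p \leq n,\, 1 \leq q \leq m\}$, no rank $1$ operator is $(mn-1)$-smooth, and by the reduction above none lies in the interior of an edge.

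I do not anticipate any genuine obstacle here: the reduction through Theorem \ref{interior} and Theorem \ref{position:rank1} is immediate, and the inequality $(n-1)(m-1) < mn - 1$ for $m, n > 1$ is the only substantive step. The one point requiring care is the bookkeeping of conventions, namely that $\dim \mathbb{L}(\mathbb{X},\mathbb{Y}) = mn$ and that an edge therefore corresponds to $k = mn - 1$, so that the dimension of the face and the order of smoothness are matched correctly.
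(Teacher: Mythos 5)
Your proposal is correct and follows exactly the paper's route: the paper deduces the corollary immediately from Theorem \ref{interior} and Theorem \ref{position:rank1} together with the asserted arithmetic fact that $mn-1 \notin \{pq : 1 \leq p \leq n,\ 1 \leq q \leq m\}$ for $m,n>1$. Your only addition is spelling out the elementary verification of that arithmetic fact (via $(n-1)(m-1) < mn-1$), which the paper leaves unstated.
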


  In the following observation  we  find the exact  number of faces of $B_{\mathbb{L}(\ell_1^n, \ell_\infty^m)}$ that do not contain a $rank~1$ operator.

   \begin{cor}
   	Let  $S =\{ 1, 2, \ldots, mn\} \setminus   \{ pq: 1 \leq p \leq n, 1 \leq q \leq m\}.$
   	 Then there exists $\sum_{k \in S} \binom{mn}{k}2^{k}$ number of faces of $B_{\mathbb{L}(\ell_1^n, \ell_\infty^m)}$ whose interior does not contain a $rank ~1$  operator.
       \end{cor}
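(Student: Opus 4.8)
The plan is to translate the whole question into the combinatorics of a cube. First I would identify $\mathbb{L}(\ell_1^n,\ell_\infty^m)$ isometrically with $\ell_\infty^{mn}$: representing $T$ by its matrix $(a_{ij})_{1\le i\le m,\,1\le j\le n}$ and using $Ext(B_{\ell_1^n})=\{\pm e_j\}$, one gets $\|T\|=\max_j\|Te_j\|_\infty=\max_{i,j}|a_{ij}|$, so $B_{\mathbb{L}(\ell_1^n,\ell_\infty^m)}$ is exactly the cube $B_{\ell_\infty^{mn}}$. Its faces are explicit: a face is specified by a nonempty set $\Gamma$ of entries fixed to values in $\{+1,-1\}$, the remaining entries ranging over $(-1,1)$, and its dimension is $mn-|\Gamma|$. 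Consequently the number of faces of dimension $mn-k$ (those with $|\Gamma|=k$) equals $\binom{mn}{k}2^{k}$, and the total number of faces is $\sum_{k=1}^{mn}\binom{mn}{k}2^{k}=3^{mn}-1$.

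Next I would read off the rank-one condition. By Theorem~\ref{interior}, applied to $\mathbb{L}(\ell_1^n,\ell_\infty^m)$ of dimension $mn$, the interior points of an $(mn-k)$-face are precisely the $k$-smooth operators; and by Theorem~\ref{position:rank1} (see also Corollary~\ref{no-k-smooth}) a rank~$1$ operator is $k$-smooth exactly for $k\in\{pq:1\le p\le n,\,1\le q\le m\}$. Hence any face whose interior contains a rank~$1$ operator has dimension $mn-k$ with $k\in\{pq\}$, so every face of dimension $mn-k$ with $k\in S$ has rank-one-free interior. Summing $\binom{mn}{k}2^{k}$ over $k\in S$ yields the asserted number $\sum_{k\in S}\binom{mn}{k}2^{k}$ of such faces.

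The step I expect to be the main obstacle is the converse direction required for the count to be \emph{exact}, namely that no face of dimension $mn-k$ with $k\in\{pq\}$ is rank-one-free. The cube picture suggests how to attack it: a rank~$1$ matrix $uv^{T}$, normalized so that $\max_i|u_i|=\max_j|v_j|=1$, has modulus $1$ exactly on the product set $I\times J$ with $I=\{i:|u_i|=1\}$ and $J=\{j:|v_j|=1\}$, so its supporting face has fixed set $\Gamma=I\times J$ and sign pattern $\varepsilon_{ij}=\operatorname{sgn}(u_i)\operatorname{sgn}(v_j)$. I would try to run this in reverse, realizing a prescribed face by an explicit $uv^{T}$ with the off-rectangle entries shrunk strictly below $1$, exactly in the spirit of the constructions in Theorems~\ref{rank1} and \ref{face}. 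The delicate point, and where the real care is needed, is that this only succeeds when $\Gamma$ is genuinely a product set $I\times J$ and its prescribed signs factor as $s_i t_j$; controlling which $k$-element fixed sets (and which sign patterns) are of this rectangular form is the crux of pinning down the count precisely.
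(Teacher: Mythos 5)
Your argument matches the paper's: identify $\mathbb{L}(\ell_1^n,\ell_\infty^m)$ isometrically with $\ell_\infty^{mn}$, count the $(mn-k)$-faces as $\binom{mn}{k}2^{k}$ via the choice of the $k$-element fixed set and its $2^{k}$ sign patterns, and combine Theorem \ref{interior} with Theorem \ref{position:rank1} to conclude that for $k\in S$ no such face can have a rank-one operator in its interior. The ``converse'' you single out as the main obstacle is neither needed nor attempted in the paper, since the corollary is read as asserting the existence of at least that many rank-one-free faces; indeed your own observation that a rank-one $uv^{T}$ attains modulus $1$ exactly on a product set $I\times J$ with factoring signs shows the stated count is in general only a lower bound, so trying to prove exactness would fail.
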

   
   \begin{proof}
   	From \cite[Remark 2.19]{SSP}, we observe that $\mathbb{L}(\ell_1^n, \ell_{\infty}^m)$ is isometrically isomorphic to $\ell_{\infty}^{mn}.$ Let $k \in S.$ Using Theorem \ref{interior} and Theorem \ref{position:rank1},  it suffices to show that the number of $(mn-k)$-face of $B_{\ell_{\infty}^{mn}}$ is $\binom{mn}{k}2^{k}.$ Let $F$ be an $(mn-k )$-face of $B_{\ell_{\infty}^{mn}}.$ Then there exists $Q \subset \{ 1,2, \ldots, mn\}$ such that for any $\widetilde{x}=( x_1, x_2, \ldots, x_{mn}) \in int(F)$ we have $ |x_j|=1, ~j \in  Q$ and $ |x_j|<1,$ for any $j \in \{1,2, \ldots, mn\} \setminus Q.$ Observe that there are exactly  $\binom{mn}{k}$ possible choices of $Q.$ Moreover, each choice of $Q$ corresponds to  $2^{k}$ distinct $(mn-k)$-faces of $B_{\ell_{\infty}^{mn}}.$ Also note that any $(mn-k)$-face of $B_{\ell_{\infty}^{mn}}$ does not correspond to two distinct choice of $Q.$ Therefore, the total number of $(mn-k)$-faces of $B_{\ell_{\infty}^{mn}}$ is $\binom{mn}{k}2^{k}.$ \\
   \end{proof}

We end this article with the following remark which highlights the main findings of the present article. 

\begin{remark}
	Under the condition that $\mathbb{X}$ is reflexive, $\mathbb{K}(\mathbb{X}, \mathbb{Y})$ is an $M$-ideal of $\mathbb{L}(\mathbb{X}, \mathbb{Y})$ and $dist(T, \mathbb{K}(\mathbb{X}, \mathbb{Y})) <1,$ we proved  (see  Theorem \ref{order}) that $T$ is $k$-smooth if and only if $i_{M_T}(T) =k.$ Observe that as given in Definition \ref{definition:index}, determining the value of $i_{M_T}(T)$ is essentially equivalent to determining the dimension of a particular subspace of some $\mathbb{K}^n,$ where $\mathbb{K}=\mathbb{R},$ or, $\mathbb{C}.$ It is clear that the particular subspace under consideration depends on the geometries of both $\mathbb{X}$ and $\mathbb{Y}$. Whenever $\mathbb{X}, \mathbb{Y}$ are finite-dimensional polyhedral Banach spaces, we demonstrate an explicit way to determine the value of $i_{M_T}(T)$ for any $T \in \mathbb{L}(\mathbb{X},\mathbb{Y}).$ This is illustrated step by step in Section-II. Therefore, it follows that the problem of determining the order of smoothness of an operator between finite-dimensional polyhedral Banach spaces can be completely solved in a computationally effective manner by using this newly introduced index of smoothness. In \cite[Th. 2.1]{MDP20}, authors characterized $k$-smoothness of a linear operator defined on a finite-dimensional Banach space. 
	However, determining the value of $i_{M_T}(T)$ helps us to extend \cite[Th. 2.1]{MDP20} to the setting of an arbitrary Banach space.
\end{remark}

\noindent \textbf{Declarations of interest}: None.

\end{document}